\newtheorem{thm}{Theorem}[section] 
\newtheorem{cor}[thm]{Corollary}
\newtheorem{lem}[thm]{Lemma}
\newtheorem{prop}[thm]{Proposition}
\newtheorem{defn}[thm]{Definition}
\theoremstyle{remark}
\newtheorem{rem}[thm]{Remark}
\numberwithin{equation}{section}
\newcommand{\Real}{\mathbb R}
\newcommand{\eps}{\varepsilon}
\newcommand{\F}{\mathcal{F}}
\newcommand{\one}[1]{\mathbf{1}_{\{#1\}}}
\renewcommand{\P}{\mathbb{P}}
\newcommand{\E}{\mathbb{E}}
\DeclareMathOperator*{\sign}{sign}
\DeclareMathOperator{\cov}{cov}
\renewcommand{\Im}{\mathrm{Im}}
\renewcommand{\i}{\mathrm{i}}
\def\Xint#1{\mathchoice
   {\XXint\displaystyle\textstyle{#1}}%
   {\XXint\textstyle\scriptstyle{#1}}%
   {\XXint\scriptstyle\scriptscriptstyle{#1}}%
   {\XXint\scriptscriptstyle\scriptscriptstyle{#1}}%
   \!\int}
\def\XXint#1#2#3{{\setbox0=\hbox{$#1{#2#3}{\int}$}
     \vcenter{\hbox{$#2#3$}}\kern-.5\wd0}}
\def\dashint{\Xint-}
\newcommand{\doublewidetilde}[1]{{%
  \mathpalette\double@widetilde{#1}%
}}
\newcommand{\double@widetilde}[2]{%
  \sbox\z@{$\m@th#1\widetilde{#2}$}%
  \ht\z@=.9\ht\z@
  \widetilde{\box\z@}%
}
\begin{document}

\title[]
{
Estimation of the Hurst parameter from continuous noisy data
}

\author{P. Chigansky}%
\address{Department of Statistics,
The Hebrew University,
Mount Scopus, Jerusalem 91905,
Israel}
\email{pchiga@mscc.huji.ac.il}

\author{M. Kleptsyna}%
\address{Laboratoire de Statistique et Processus,
Le Mans Universit\'{e},
France}
\email{marina.kleptsyna@univ-lemans.fr}

\thanks{P.Chigansky is supported by ISF 1383/18 grant; M.Kleptsyna is supported by ANR EFFI grant}


\keywords
{
fractional Brownian motion, local asymptotic normality, Hurst parameter estimation 
}%

\date{\today}%
\begin{abstract}
This paper addresses the problem of estimating the Hurst exponent of the fractional Brownian motion from 
continuous time noisy sample. Consistent estimation in the setup under consideration is possible only if either the length of the 
observation interval increases to infinity or intensity of the noise decreases to zero. The main result is a proof of the Local 
Asymptotic Normality (LAN) of the model in these two regimes, which reveals the optimal minimax rates.
\end{abstract}
\maketitle

\section{Introduction}

Estimation of the Hurst parameter is an old problem in statistics of time series. 
A benchmark model is the fractional Brownian motion (fBm) $B^H = (B^H_t, \ t\in \Real_+)$, i.e., 
the centered Gaussian process with covariance function
$$
\E B^H_t   B^H_s = \tfrac 1 2 t^{2H} + \tfrac 1 2 s^{2H} - \tfrac 1 2 |t-s|^{2H},
$$
where $H\in (0,1)$ is the Hurst exponent.
The fBm is a well studied stochastic process with a variety of interesting and useful properties, see, e.g., 
\cite{PT17}, \cite{EM02}. Its increments are stationary and, for $H>\frac 1 2$, positively correlated with 
long-range dependence 
$$
\sum_{n=1}^\infty \E B^H_1 (B^H_{n}-B^H_{n-1}) =\infty.
$$
It is this feature which makes the fBm relevant to statistical modeling in many applications.  

A basic problem is to estimate the Hurst parameter $H\in (0,1)$ and the additional scaling parameter $\sigma\in \Real_+$
given the data 
$$
X^T := (\sigma B^H_t, \, t\in [0,T]).
$$ 
Since both parameters can be recovered from $X^T$ exactly for any $T>0$, 
a meaningful statistical problem is to estimate them from the discretized data 
\begin{equation}\label{BDelta}
X^{T,\Delta} := \big(\sigma B^H_{\Delta},\, ...,\, \sigma B^H_{n\Delta}\big)
\end{equation}
where $\Delta>0$ is the discretization step and $n=[T/\Delta]$. The two relevant regimes, in which consistent estimation 
from \eqref{BDelta} is feasible, are the large time asymptotics with a fixed $\Delta >0$ and $T\to\infty$, 
and the high frequency asymptotics  with $\Delta \to 0$ and a fixed $T>0$.   

A statistically harder problem is obtained if an independent noise is added to the sample. One possibility, which was explored so far,
is estimation from the discrete noisy data
\begin{equation}\label{BDeltaxi}
X^{T,\Delta} := \big(\sigma B^H_{\Delta }+\xi_{1,n},\, ...,\, \sigma B^H_{n\Delta}+\xi_{n,n}\big),
\end{equation}
where $\xi_{j,n}$ are i.i.d. random variables independent of $B^H$.
While presence of noise does not alter the large time asymptotics in any drastic way, it does slow down the optimal minimax rates
in the high frequency regime, see Section \ref{sec:dis} for more details.

In this paper we consider an alternative scenario with {\em continuous time} noisy sample obtained by adding to the fBm 
an independent standard Brownian motion $B$:
\begin{equation}\label{mfBm}
X^T := \big(\sigma B^H_t + \sqrt \eps B_t, t\in [0,T]\big),
\end{equation}
where $\eps>0$ is the known noise intensity. From the statistical standpoint, a peculiar feature of this process, called 
the {\em mixed fBm}, is that consistent estimation in the high frequency regime is no longer possible.
Indeed, it was shown in \cite{Ch01} that for $H>\frac 34$ the probability measures induced by the the process \eqref{mfBm} 
and the Brownian motion $\sqrt \eps B$ are mutually absolutely continuous.
This implies that the parameters in question cannot be recovered exactly from the sample $X^T$ with any finite $T$ and, a fortiori, 
from its discretization.

Our contribution is a proof of the local asymptotic normality (LAN) property in the large time ($T\to\infty$ and fixed $\eps>0$)
and small noise ($T<\infty$ fixed and $\eps\to 0$) asymptotic regimes. In both cases the analysis reveals the optimal minimax 
rates and yields explicit expressions for the relevant Fisher information matrices. The rest of the paper is organized as follows.
Section \ref{sec:bac} outlines the essential background needed for formulation of the main results in Section 
\ref{sec:main}. The results are discussed and compared to the related literature in Section \ref{sec:dis}.
The proofs appear in Sections \ref{sec:prev}-\ref{sec:sn}.

\section{The LAN property}\label{sec:bac}

Let us briefly recall Le Cam's LAN property and its role in the asymptotic theory of estimation. A comprehensive account on the 
subject can be found in, e.g., \cite{IKh}. An abstract parametric statistical experiment consists of a measurable 
space $(\mathcal X, \mathcal A)$, where $\mathcal A$ is a $\sigma$-algebra of subsets of $\mathcal X$,
a family of probability measures $(\P_\theta)_{\theta\in \Theta}$ on $\mathcal A$ with the parameter space
$\Theta\subseteq \Real^k$ and the sample $X\sim \P_{\theta_0}$ for a true value $\theta_0\in \Theta$ of the parameter variable. 
Asymptotic theory is concerned with a family of statistical experiments $(\mathcal X^h, \mathcal A^h, (\P^h_\theta)_{\theta\in \Theta})$  
indexed by a real valued variable $h>0$.

\begin{defn}\label{defn:LAN}
A family of probability measures $(\P^h_\theta)_{\theta\in \Theta}$ is locally asymptotically normal (LAN) at a point $\theta_0$
as $h\to 0$ if there exist nonsingular $k\times k$ matrices $\phi(h)=\phi(h, \theta_0)$, such that for any $u\in \Real^k$, 
the Radon-Nikodym derivatives (likelihood ratios) satisfy the scaling property 
\begin{equation}\label{LR}
\log \frac{d\P^h_{\theta_0+\phi(h)u}}{d\P^h_{\theta_0}}(X^h) = u^\top Z_{h,\theta_0} - \tfrac 1 2 \|u\|^2
+ r_h(u,\theta_0),
\end{equation}
where the random vector $Z_{h, \theta_0}$ converges weakly under $\P^h_{\theta_0}$ to the standard normal law on $\Real^k$
and $r_h (u,\theta_0)$ vanishes in $\P^h_{\theta_0}$-probability as $h\to 0$.
\end{defn}

Define a set $W_{2,k}$ of loss functions $\ell:\Real^k\mapsto \Real_+$, which are continuous and symmetric 
with $\ell(0)=0$, have convex sub-level sets $\{u:\ell(u)<c\}$ for all $c>0$ and satisfy the growth condition 
$\lim_{\|u\|\to 0}\exp(-a \|u\|^2)\ell(u)=0$, $\forall a>0$.
The following theorem establishes asymptotic lower bound for the corresponding local minimax risks of estimators in LAN families.

\begin{thm}[H\'ajek]\label{thm:HLC}
Let $(\P^h_\theta)_{\theta \in \Theta}$ satisfy the LAN property at $\theta_0$ with matrices $\phi(h, \theta_0)\to 0$ as $h\to 0$.  
Then for any family of estimators $\widehat \theta_h$, a loss function $\ell\in W_{2,k}$ and any $\delta>0$,
$$
\varliminf_{h\to 0}\sup_{\|\theta-\theta_0\|<\delta} \E^h_\theta \ell \big(\phi(h,\theta_0)^{-1} (\widehat \theta_h-\theta)\big)
\ge \int_{\Real^k} \ell(x) \gamma_k(x)dx,
$$
where $\gamma_k$ is the standard normal density on $\Real^k$.
\end{thm}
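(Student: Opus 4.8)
Here is the argument I would follow, the classical Bayesian reduction behind H\'ajek's theorem. Write $\phi(h)=\phi(h,\theta_0)$ and reparametrize locally: put $\theta=\theta_0+\phi(h)u$ with $u\in\Real^k$, so that, with the renormalized estimator $S_h:=\phi(h)^{-1}(\widehat\theta_h-\theta_0)$, the loss incurred at $\theta_0+\phi(h)u$ is $\ell(S_h-u)$. Two preliminary reductions. First, it suffices to prove the asserted lower bound with $\ell$ replaced by the bounded truncation $\ell_C:=\ell\wedge C$ for every fixed $C<\infty$, because $\ell_C\le\ell$ and $\int_{\Real^k}\ell_C\gamma_k\uparrow\int_{\Real^k}\ell\gamma_k$ by monotone convergence as $C\to\infty$; note that $\ell_C$ inherits from $\ell$ continuity, symmetry, and convexity of its sub-level sets. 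Second, since $\phi(h)\to0$, for any fixed $u_1,\dots,u_m\in\Real^k$ and weights $p_1,\dots,p_m\ge0$ with $\sum_i p_i=1$ the points $\theta_0+\phi(h)u_i$ all lie in $\{\|\theta-\theta_0\|<\delta\}$ once $h$ is small, so
\begin{equation*}
\sup_{\|\theta-\theta_0\|<\delta}\E^h_{\theta}\,\ell_C\!\big(\phi(h)^{-1}(\widehat\theta_h-\theta)\big)\ \ge\ \sum_{i=1}^m p_i\,\E^h_{\theta_0+\phi(h)u_i}\,\ell_C(S_h-u_i),
\end{equation*}
the right-hand side being the Bayes risk against the finitely supported prior $\pi=\sum_i p_i\delta_{u_i}$.

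The heart of the matter is to pass to the limit in this Bayes risk. Changing measure to $\P^h_{\theta_0}$ and setting $\Lambda_h(u_i):=\frac{d\P^h_{\theta_0+\phi(h)u_i}}{d\P^h_{\theta_0}}(X^h)$, the Bayes risk equals $\E^h_{\theta_0}\big[\sum_i p_i\ell_C(S_h-u_i)\Lambda_h(u_i)\big]$. Setting
\begin{equation*}
W_h:=\inf_{s\in\Real^k}\ \sum_{i=1}^m p_i\,\ell_C(s-u_i)\,\Lambda_h(u_i)\ \ge\ 0,
\end{equation*}
we have $\sum_i p_i\ell_C(S_h-u_i)\Lambda_h(u_i)\ge W_h$ pointwise, hence the Bayes risk is at least $\E^h_{\theta_0}[W_h]$. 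The point is that $W_h$ depends on the data only through the vector $\big(\Lambda_h(u_1),\dots,\Lambda_h(u_m)\big)$, via the map $\lambda\mapsto\inf_s\sum_i p_i\ell_C(s-u_i)\lambda_i$, which is finite and concave, hence continuous, on $(0,\infty)^m$. By the LAN expansion of Definition \ref{defn:LAN}, $\log\Lambda_h(u_i)=u_i^\top Z_{h,\theta_0}-\tfrac12\|u_i\|^2+r_h(u_i,\theta_0)$, with one common vector $Z_{h,\theta_0}\Rightarrow Z\sim N(0,\Id_k)$ and $r_h(u_i,\theta_0)\to0$ in $\P^h_{\theta_0}$-probability for each $i$; consequently $\big(\log\Lambda_h(u_i)\big)_{i\le m}\Rightarrow\big(u_i^\top Z-\tfrac12\|u_i\|^2\big)_{i\le m}$ jointly under $\P^h_{\theta_0}$. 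The continuous mapping theorem then gives $W_h\Rightarrow W:=\inf_s\sum_i p_i\ell_C(s-u_i)\Lambda(u_i)$ with $\Lambda(u_i):=\exp(u_i^\top Z-\tfrac12\|u_i\|^2)$, and since $W_h\ge0$ the Portmanteau inequality yields $\varliminf_{h\to0}\E^h_{\theta_0}[W_h]\ge\E[W]$.

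Finally one identifies $\E[W]$ and optimizes over $\pi$. Undoing the change of measure, $\E[W]=\inf_{\widehat u(\cdot)}\sum_i p_i\,\E_{u_i}\ell_C(\widehat u(Y)-u_i)$ is precisely the Bayes risk, for the prior $\pi$ and loss $\ell_C$, of the Gaussian shift experiment in which one observes $Y\sim N(u,\Id_k)$: here $\Lambda(u)$ is the likelihood ratio of $N(u,\Id_k)$ with respect to $N(0,\Id_k)$, and $Y$ has law $N(0,\Id_k)$ when $u=0$. For that experiment the minimax risk for $\ell_C$ equals $\int_{\Real^k}\ell_C\gamma_k$: the estimator $\widehat u(Y)=Y$ has constant risk $\E\,\ell_C(N(0,\Id_k))=\int_{\Real^k}\ell_C\gamma_k$, whereas with a Gaussian prior $N(0,\tau^2\Id_k)$ the posterior is Gaussian and \emph{Anderson's lemma} — applicable because $\ell_C$ is symmetric with convex sub-level sets and the posterior is unimodal — shows the posterior mean to be a Bayes estimator, so the Bayes risk equals $\E\,\ell_C(N(0,\Sigma_\tau))$ with $\Sigma_\tau=\tfrac{\tau^2}{1+\tau^2}\Id_k$, which converges to $\int_{\Real^k}\ell_C\gamma_k$ as $\tau\to\infty$; a standard approximation of Gaussian priors by finitely supported ones (whose Bayes risks converge, $\ell_C$ being bounded and continuous and the shift experiment regular) shows that the supremum of the Gaussian-shift Bayes risk over finitely supported $\pi$ is likewise $\int_{\Real^k}\ell_C\gamma_k$. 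Combining the displays, $\varliminf_{h\to0}\sup_{\|\theta-\theta_0\|<\delta}\E^h_\theta\,\ell_C(\phi(h)^{-1}(\widehat\theta_h-\theta))\ge\int_{\Real^k}\ell_C\gamma_k$ for every $C$, and letting $C\to\infty$ finishes the proof.

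The step I expect to be the main obstacle is the limit passage in the second paragraph: the renormalized estimator $S_h$ need not be tight — it cannot be, as no rate of consistency is assumed — so one must avoid its limit law altogether, which is exactly why the loss is truncated and the prior taken finitely supported, recasting the object of interest as a bounded-below \emph{continuous} functional of the finite vector $\big(\Lambda_h(u_i)\big)_{i\le m}$ whose joint weak convergence is automatic from the LAN expansion, all its coordinates being assembled from the single Gaussian-limiting vector $Z_{h,\theta_0}$. The remaining pieces — the continuity of $\lambda\mapsto\inf_s\sum_i p_i\ell_C(s-u_i)\lambda_i$ on $(0,\infty)^m$, the Gaussian posterior computation, the discretization of the prior, and Anderson's lemma — are routine once the loss is bounded.
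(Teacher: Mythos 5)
The paper does not actually prove Theorem~\ref{thm:HLC}: its ``proof'' is a one-line citation to Ibragimov--Khasminskii, Theorem~12.1. So there is no in-paper argument to compare against; what you have written is a self-contained proof of H\'ajek's local asymptotic minimax theorem along the classical Bayes/minimax route (truncate the loss, minorize the sup risk by a Bayes risk against a finitely supported prior, rewrite under $\P^h_{\theta_0}$ via the LAN likelihood ratios, bound below by the deterministic infimum $W_h$ which is a concave and hence continuous functional of the finite vector $(\Lambda_h(u_i))_i$, apply the joint CLT supplied by LAN together with Fatou/Portmanteau, and identify the limit as the Gaussian-shift Bayes risk which is driven up to $\int\ell_C\gamma_k$ via Anderson's lemma). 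This is the right skeleton, and your remark about why tightness of $S_h$ must be avoided, and why the finite support of the prior together with truncation of the loss is precisely what makes the weak-convergence step painless, is the crux. The one place you wave hands is the very last reduction, namely that the supremum of the Gaussian-shift Bayes risk over \emph{finitely supported} priors equals $\int\ell_C\gamma_k$. Concavity of $\pi\mapsto r(\pi)$ gives only upper semicontinuity under weak convergence, so ``Bayes risks converge because $\ell_C$ is bounded and continuous'' is not automatic; one needs to exploit the Gaussian shift structure. The clean way is to restrict to priors supported in a fixed compact $K$: writing $r(\pi)=\int_{\Real^k}\inf_s\bigl(\int_K\ell_C(s-u)\phi(y-u)\,\pi(du)\bigr)\,dy$, one checks that for each fixed $y$ the inner infimum is a continuous functional of $\pi\in\mathcal P(K)$ (the integrand is bounded continuous in $(s,u)$, the infimum can be taken over a compact set of $s$, and the convergence is uniform there) and is dominated by $C\max_{u\in K}\phi(y-u)$, which is integrable in $y$; dominated convergence then gives genuine continuity of $\pi\mapsto r(\pi)$ on $\mathcal P(K)$, and letting the lattice refine and the cube grow recovers $\int\ell_C\gamma_k$. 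With that repair (or by working directly with the uniform prior on an expanding cube, as in van der Vaart's treatment), your argument is complete and is essentially the proof the paper delegates to Ibragimov--Khasminskii.
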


\begin{proof} 
 \cite[Theorem 12.1]{IKh}.
\end{proof}

Estimators which achieve H\'ajek's lower bound are called asymptotically efficient in the local minimax sense. 
Often likelihood based estimators, such as the Maximum Likelihood or the Bayes estimators, are asymptotically efficient. 
However, they can be excessively complicated and it is then desirable to construct simpler estimators, which are at least 
rate optimal. In complex models this can be easier to carry out separately for each component of the parameter vector, following some ad-hoc 
heuristics. Proving rate optimality of so obtained estimators requires finding the best minimax rates for each
parameter.

Let us explain how such entrywise rates can be derived using the bound from Theorem \ref{thm:HLC}. 
Analysis of the likelihood ratio in \eqref{LR} typically shows that in LAN families $\phi(h, \theta_0)$ must satisfy the condition 
\begin{equation}\label{cond}
\phi(h, \theta_0)^\top M(h,\theta_0) I(\theta_0) M(h,\theta_0)^\top \phi(h, \theta_0)\xrightarrow[h\to 0]{}\mathrm{Id},
\end{equation}
where the matrices $M(h,\theta_0)$ and $I(\theta_0)$ are determined by the statistical model under consideration. 
The matrix $I(\theta_0)$ is positive definite and independent of $h$, and it can be often regarded as the analog of the 
usual Fisher information matrix.

Consider the Cholesky decomposition 
$$
L(h,\theta_0)L(h,\theta_0)^\top = M(h,\theta_0) I(\theta_0) M(h,\theta_0)^\top,
$$
where $L(h,\theta_0)$ is the unique lower triangular matrix with positive diagonal entries.
Then \eqref{cond} holds with $\phi(h, \theta_0)^\top :=L(h,\theta_0)^{-1}$ and the last entry of the vector 
$\phi(h,\theta_0)^{-1} (\widehat \theta_h-\theta)$ is given by
$$
\big[\phi(h,\theta_0)^{-1} (\widehat \theta_h-\theta)\big]_{k} = \big[L(h,\theta_0)^\top  (\widehat \theta_h-\theta)\big]_{k}=
L_{kk}(h,\theta_0) (\widehat \theta_{h,k}-\theta_k).
$$
Let $\widetilde \ell \in W_{2,1}$ be a loss function of a scalar  variable, $\widetilde \ell:\Real \mapsto \Real_+$, 
and define $\ell(x) := \widetilde \ell(x_k)$, $x\in \Real^k$. This loss function  belongs to $W_{2,k}$ and H\'ajek's bound implies 
\begin{equation}\label{sbnd}
\varliminf_{h\to 0}\sup_{\|\theta-\theta_0\|<\delta} \E^h_\theta \widetilde \ell \big( L_{kk}(h,\theta_0) (\widehat \theta_{h,k}-\theta_k)\big)
\ge \int_{\Real} \widetilde \ell(t) \gamma_1(t)dt >0.
\end{equation}
This inequality identifies the last diagonal entry of $L(h,\theta_0)$ as the best minimax rate in estimation of $\theta_k$.
Similar bound for an arbitrary entry is obtained by permuting the components of $\theta$ so that it becomes the last.

The most commonly encountered instance of \eqref{cond} is when the matrix $M(h,\theta_0)$ is diagonal.  
Then  $L(h,\theta_0) = M(h,\theta_0) S(\theta_0)$ where $S(\theta_0)$ is the Cholseky factor of $I(\theta_0)$. 
Since $I(\theta_0)$ is positive definite, all diagonal entries of $S(\theta_0)$ are positive (and constant in $h$) 
and, in view of \eqref{sbnd}, the best minimax rate is determined only by $M_{kk}(h,\theta_0)$.
This is the case for our model in the large time asymptotic regime with $h:=1/T$ (see Theorem \ref{thm:main}).
In the small noise regime with $h:=\eps$, the matrix $M(h,\theta_0)$ is non-diagonal (see Theorem \ref{thm2}),
which results in a logarithmic discrepancy between the best minimax rates in estimation of each parameter.

\section{Main results}\label{sec:main}

\subsection{Large time asymptotics} 
The covariance function of the fBm with parameter variable 
$
\theta = (H,\sigma^2) \in (\tfrac 3 4 , 1) \times (0,\infty) =: \Theta
$
can be written as 
$$
\cov (B_s^H, B_t^H) = \int_0^s \int_0^t K_\theta(u-v) dudv
$$
where 
\begin{equation}\label{KH}
K_\theta(\tau) =  \sigma^2 H (2H-1) |\tau|^{2H-2}.
\end{equation}
The Fourier transform of this kernel has the explicit formula 
\begin{equation}\label{KK}
\widehat K_\theta(\lambda) = \int_\Real K_\theta(\tau)e^{-\i\lambda \tau}d\tau= \sigma^2 a_H |\lambda|^{1-2H}, \quad \lambda \in \Real \setminus\{0\},
\end{equation}
with the constant $a_H:= \Gamma(2H+1)\sin (\pi H)$.
The function $\widehat K_\theta(\lambda)$ does not decay sufficiently fast to be integrable on $\Real$ and hence, 
strictly speaking, it is not a spectral density of a stochastic process in the usual sense.
It can be thought of as the spectral density of the {\em fractional noise}, a formal derivative of the fBm.
  
Denote by $\P^T_\theta$ the probability measure on the space of continuous functions $C([0,T],\Real)$, induced by the mixed fBm  
\eqref{mfBm} with parameter $\theta$ and $\eps>0$ being fixed.

\begin{thm}\label{thm:main}
The family $(\P^T_\theta)_{\theta\in \Theta}$ is LAN at any $\theta_0\in \Theta$ as $T\to\infty$ with  
$$
\phi(T) =T^{-1/2} I(\theta_0, \eps)^{-1/2},
$$
where $I(\theta_0, \eps)$ is the Fisher information matrix 
\begin{equation}\label{Ieps}
I(\theta,\eps)  = \frac 1 {4\pi} \int_{-\infty}^\infty \nabla^\top \log \big(\eps +  \widehat K_\theta(\lambda)\big)
\nabla \log \big(\eps +  \widehat K_\theta(\lambda)\big) d\lambda>0,
\end{equation}
and $\nabla$ denotes the gradient with respect to parameter variable $\theta$.
\end{thm}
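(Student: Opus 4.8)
The plan is to identify the log-likelihood ratio with a quadratic functional of the observation plus a deterministic normalization, both expressed through the spectral density $f_\theta(\lambda):=\eps+\widehat K_\theta(\lambda)$, and then to run the usual LAN programme: Taylor expand in the local parameter, compute the limiting Fisher information via Szeg\H{o}-type asymptotics for traces of Toeplitz operators, and prove a central limit theorem for the score by the method of cumulants. Since the mixed fBm has stationary increments, its generalized derivative $\dot X^T$ is a stationary Gaussian process on $[0,T]$ with spectral density $f_\theta$, and for $H>\tfrac34$ the measures $\P^T_\theta$, $\theta\in\Theta$, are mutually equivalent and equivalent to the law of $\sqrt\eps B$ (this is the content of \cite{Ch01}). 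Writing $\mathfrak T_T(a)$ for the truncated Toeplitz operator on $L^2[0,T]$ with symbol $a$, one has $\mathfrak T_T(f_\theta)=\eps\,\Id+\mathcal R_{T,\theta}$, where $\mathcal R_{T,\theta}$ is the integral operator with kernel $K_\theta(s-t)=\sigma^2H(2H-1)|s-t|^{2H-2}$; for $H>\tfrac34$ this kernel is square-integrable near the diagonal, so $\mathcal R_{T,\theta}$ is Hilbert--Schmidt and the relevant (Carleman--Fredholm) determinants are well defined. Then, suitably interpreted,
$$
\log\frac{d\P^T_{\theta_1}}{d\P^T_{\theta_0}}(X^T)=\tfrac12\big\langle\big(\mathfrak T_T(f_{\theta_0})^{-1}-\mathfrak T_T(f_{\theta_1})^{-1}\big)\dot X^T,\dot X^T\big\rangle-\tfrac12\log\frac{\det\mathfrak T_T(f_{\theta_1})}{\det\mathfrak T_T(f_{\theta_0})}.
$$

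Setting $\theta_1=\theta_0+\phi(T)u$, I would Taylor-expand the right-hand side to second order in $\phi(T)u$. The first-order term is $u^\top\phi(T)^\top D_T$, where $D_T:=\nabla_\theta\log\tfrac{d\P^T_\theta}{d\P^T_{\theta_0}}(X^T)\big|_{\theta=\theta_0}$ is the score, with components
$$
D_T^{(i)}=\tfrac12\Big(\big\langle A_i\dot X^T,\dot X^T\big\rangle-\trace\big(A_i\mathfrak T_T(f_{\theta_0})\big)\Big),\quad A_i:=\mathfrak T_T(f_{\theta_0})^{-1}\mathfrak T_T(g_i)\mathfrak T_T(f_{\theta_0})^{-1},\quad g_i:=\partial_{\theta_i}\widehat K_{\theta_0};
$$
the second-order term is $-\tfrac12 u^\top\phi(T)^\top J_T\phi(T)u$ with observed information $J_T$, $\E_{\theta_0}J_T=\var_{\theta_0}(D_T)$; and the higher-order terms carry a factor of order $\|\phi(T)\|^3=O(T^{-3/2})$ against quadratic forms of order $T$ (uniformly on the shrinking segment $[\theta_0,\theta_1]$), hence are of order $T^{-1/2}$ in $\P_{\theta_0}$-probability and will be absorbed into $r_T(u,\theta_0)$. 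The analytic heart is to prove
$$
\tfrac1T\cov_{\theta_0}\big(D_T^{(i)},D_T^{(j)}\big)=\tfrac1{2T}\trace\big(\mathfrak T_T(f_{\theta_0})^{-1}\mathfrak T_T(g_i)\mathfrak T_T(f_{\theta_0})^{-1}\mathfrak T_T(g_j)\big)\xrightarrow[T\to\infty]{}\tfrac1{4\pi}\int_{-\infty}^\infty\partial_{\theta_i}\log f_{\theta_0}\,\partial_{\theta_j}\log f_{\theta_0}\,d\lambda,
$$
i.e. $\tfrac1T\var_{\theta_0}(D_T)\to I(\theta_0,\eps)$, together with the law of large numbers $\tfrac1T J_T\to I(\theta_0,\eps)$ in $\P_{\theta_0}$-probability. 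Both follow, via the continuous-time Szeg\H{o} (Avram--Parter) limit theorem, from the fact that any normalized trace $\tfrac1T\trace(\mathfrak T_T(h_1)\cdots\mathfrak T_T(h_r))$ with $h_k\in\{f_{\theta_0}^{-1},g_1,g_2\}$ converges to $\tfrac1{2\pi}\int h_1\cdots h_r\,d\lambda$, since $\partial_{\theta_i}\log f_{\theta_0}=g_i/f_{\theta_0}$.

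It remains to show $\tfrac1{\sqrt T}D_T\xrightarrow{d}\mathcal N(0,I(\theta_0,\eps))$ under $\P_{\theta_0}$, which I would obtain by the method of cumulants: the $m$-th cumulant of $\tfrac1{\sqrt T}D_T$ equals, up to a combinatorial constant, $T^{-m/2}\trace\big((\mathfrak T_T(f_{\theta_0})^{-1}\mathfrak T_T(g_i))^{m}\big)=O(T^{1-m/2})$, which vanishes for $m\ge3$ while the second cumulant converges to $I(\theta_0,\eps)$ by the previous step (a general CLT for quadratic functionals of stationary Gaussian processes could be quoted instead). Assembling everything with $\phi(T)=T^{-1/2}I(\theta_0,\eps)^{-1/2}$: the first-order term converges in law to $u^\top Z$, $Z\sim\mathcal N(0,\Id)$, so one takes $Z_{T,\theta_0}=I(\theta_0,\eps)^{-1/2}T^{-1/2}D_T$ in \eqref{LR}; the second-order term equals $-\tfrac12 u^\top I(\theta_0,\eps)^{-1/2}\big(\tfrac1T J_T\big)I(\theta_0,\eps)^{-1/2}u\to-\tfrac12\|u\|^2$ in $\P_{\theta_0}$-probability; and $r_T(u,\theta_0)\to0$ in probability. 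Nonsingularity of $\phi(T)$ amounts to positive definiteness of $I(\theta_0,\eps)$, which holds because $\partial_{\sigma^2}\log f_\theta=\widehat K_\theta/(\sigma^2 f_\theta)$ and $\partial_H\log f_\theta$ are linearly independent in $L^2(\Real)$: as $\lambda\to0$ the former tends to the nonzero constant $1/\sigma^2$ while the latter grows like $-2\log|\lambda|$. This proves the LAN property; moreover \eqref{cond} holds here with the diagonal $M(T,\theta_0)=T^{-1/2}\Id$ and $I(\theta_0)=I(\theta_0,\eps)$, so by the discussion following Theorem \ref{thm:HLC} the minimax rate for each coordinate of $\theta$ is $T^{-1/2}$.

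The main obstacle will be exactly the operator-theoretic estimates underlying the two displayed limits and the cumulant bounds. The symbol $f_\theta(\lambda)=\eps+\sigma^2 a_H|\lambda|^{1-2H}$ is unbounded at $\lambda=0$ and, for $H>\tfrac12$, its fractional part is not integrable at $\lambda=\infty$, so the textbook Szeg\H{o}/Avram--Parter and Gaussian-quadratic-CLT statements, which assume bounded or integrable symbols, do not apply directly. The way around this is to exploit the uniformly positive structure $\mathfrak T_T(f_\theta)=\eps\,\Id+\mathcal R_{T,\theta}$, so that $\mathfrak T_T(f_\theta)^{-1}$ has norm at most $1/\eps$, together with the key integrability input that every $\partial_{\theta_i}\log f_\theta$ lies in $L^2(\Real)$: at infinity because $|\lambda|^{2(1-2H)}$ is integrable there precisely when $H>\tfrac34$, and at the origin because $(\log|\lambda|)^2$ is integrable there. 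This is the single place where the restriction $H>\tfrac34$ enters, and it is what supplies the Hilbert--Schmidt and trace-norm bounds — with operator norms controlled by $L^2$-norms of symbol differences — needed to push the Toeplitz symbol calculus and the cumulant estimates through to the limits above.
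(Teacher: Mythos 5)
Your proposal follows the classical Whittle/Toeplitz route: express the log-likelihood as a quadratic form in the ``fractional noise'' $\dot X^T$ with an operator $\mathfrak T_T(f_\theta)^{-1}$ and a Carleman--Fredholm determinant, Taylor expand, and feed everything into a continuous-time Szeg\H{o}/Avram--Parter trace asymptotic plus a cumulant CLT. The paper uses a genuinely different and much more hands-on route: Hitsuda's canonical innovation representation \eqref{inn} and Girsanov give the exact likelihood \eqref{RN} via the solution $g(t,\cdot;\theta)$ of the second-kind equation \eqref{eq}; the LAN property is then reduced, through the decomposition of Lemma~\ref{lem1} and the Hessian bound of Lemma~\ref{lem2}, to fine asymptotics of that solution, which are extracted by a Laplace-transform/Hilbert boundary-value analysis (equations \eqref{rep}, \eqref{Xc}, \eqref{SDeq}, \eqref{pqeq}) and a contraction estimate in Lemma~\ref{lem:contr}.

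The gap in your proposal is exactly where you locate the ``main obstacle,'' and your proposed workaround is not a fix. The continuous-time trace asymptotics $\tfrac1T\trace(\mathfrak T_T(h_1)\cdots\mathfrak T_T(h_r))\to\tfrac1{2\pi}\int\prod h_k\,d\lambda$ that you invoke rest on the finite-section calculus of Gohberg--Feldman, and that theory requires the convolution kernel to belong to $L^1(\Real)$. Here $K_\theta(\tau)=\sigma^2H(2H-1)|\tau|^{2H-2}$ behaves like $|\tau|^{2H-2}$ with $2H-2\in(-1/2,0)$, so $K_\theta\notin L^1(\Real)$ (it fails at infinity -- this is precisely the long-range dependence), and the hypothesis is violated irreparably. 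The paper says this explicitly in Section~\ref{sec:dis}: the $K\in L^1(\Real)$ assumption is ``crucial to the approach of \cite{GF74}'' and ``is violated by the kernel \eqref{KH}, which makes the method of \cite{GF74} inapplicable.'' Remarking that $\partial_{\theta_i}\log f_\theta\in L^2(\Real)$ and that $\|\mathfrak T_T(f_\theta)^{-1}\|\le 1/\eps$ does not recover the trace limit: the commutator and remainder estimates in $\trace\big(\mathfrak T_T(f^{-1})\mathfrak T_T(g_i)\cdots\big)-\trace\big(\mathfrak T_T(f^{-1}g_i\cdots)\big)$ require trace-class control of $\mathfrak T_T(ab)-\mathfrak T_T(a)\mathfrak T_T(b)$, which is exactly what fails when the kernels are not integrable. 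In discrete time, Dahlhaus \cite{D89} had to develop substantial machinery to force Whittle theory through the long-range-dependent case; no analogous continuous-time result exists here, and your proof quietly presupposes one. (There is also a secondary issue: the operator expression for $\log(d\P^T_{\theta_1}/d\P^T_{\theta_0})$ in terms of $\dot X^T$ and Fredholm determinants is only asserted ``suitably interpreted''; the paper instead derives a rigorous likelihood formula from the innovation representation, and you would need to justify that your expression agrees with it.) So the cumulant CLT and the law of large numbers for $J_T$ are not consequences of a citable theorem in this setting -- they are the content of Lemmas \ref{lem1}--\ref{lem2}, and without an argument replacing the paper's Wiener--Hopf analysis, the proof does not close.
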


\medskip

In view of the discussion in the previous section, this result implies that the rate $T^{-1/2}$ is minimax optimal
for both $H$ and $\sigma^2$.  As explained in Section \ref{subsec:1}, 
this rate is achievable and, moreover, the lower bound can be approached arbitrarily close by estimators based on 
sufficiently dense grid of discretized observations.

\subsection{Small noise asymptotics}
With a convenient abuse of notations, let $\P^\eps_\theta$  now denote the probability measure induced by the 
mixed fBm \eqref{mfBm} with parameter $\theta$ and a fixed interval length $T>0$. Define the  matrix 
$$
M(\eps, \theta) = \eps^{-1/(4H-2)}\begin{pmatrix}
1 & -2\sigma^2  \log\eps^{-1/(2H-1)}\\
0 & 1
\end{pmatrix}.
$$
\begin{thm}
\label{thm2}  
Assume that $\phi(\eps, \theta_0)$ satisfies the scaling condition 
\begin{equation}\label{condphi}
\phi(\eps,\theta_0)^\top M(\eps,\theta_0) T I(\theta_0,1)M(\eps,\theta_0)^\top\phi(\eps,\theta_0)\xrightarrow[\eps\to 0]{}\mathrm{Id},
\end{equation}
with $I(\theta_0,1)$  defined in \eqref{Ieps}.
Then the family $(\P^\eps_\theta)_{\theta \in \Theta}$ is LAN at any $\theta_0\in \Theta$ as $\eps\to 0$.  
\end{thm}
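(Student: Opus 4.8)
The plan is to reduce the small‑noise regime to the large‑time LAN of Theorem~\ref{thm:main}, using the self‑similarity of the fBm together with Brownian scaling, and then to transport the LAN property along the resulting \emph{parameter‑dependent} time‑rescaling. Fix $\theta_0=(H_0,\sigma_0^2)\in\Theta$ and put $c=c(\eps):=\eps^{1/(2H_0-1)}$, $S=S(\eps):=T/c=T\eps^{-1/(2H_0-1)}$; since $H_0>\tfrac34$ we have $2H_0-1\in(\tfrac12,1)$, so $c\to 0$ and $S\to\infty$ polynomially in $\eps^{-1}$ as $\eps\to 0$. Consider the bimeasurable bijection $\mathcal T_c\colon C([0,T],\Real)\to C([0,S],\Real)$, $(\mathcal T_c x)(s)=c^{-H_0}x(cs)$. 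Likelihood ratios are invariant under $\mathcal T_c$, so $\P^\eps_{\theta_0}$‑a.s.
\[
\log\frac{d\P^\eps_{\theta_1}}{d\P^\eps_{\theta_0}}(X^\eps)=\log\frac{d\,(\mathcal T_c)_*\P^\eps_{\theta_1}}{d\,(\mathcal T_c)_*\P^\eps_{\theta_0}}\big(\mathcal T_c X^\eps\big),
\]
and it remains to identify the push‑forward laws.

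Using $B^H_{cs}\stackrel{d}{=}c^{H}\beta^H_s$ and $B_{cs}\stackrel{d}{=}c^{1/2}\beta_s$ for an independent fBm $\beta^H$ and Brownian motion $\beta$, under $\P^\eps_\theta$ the rescaled path $\mathcal T_c X^\eps$ has the law of $\sigma\eps^{(H-H_0)/(2H_0-1)}\beta^H_s+\beta_s$, $s\in[0,S]$; hence $(\mathcal T_c)_*\P^\eps_\theta=\P^{S,1}_{\vartheta(\eps,\theta)}$, where $\P^{S,1}_\cdot$ denotes the family of Theorem~\ref{thm:main} with horizon $S$ and noise intensity $1$, and $\vartheta(\eps,\theta)=(H,\sigma^2\eps^{2(H-H_0)/(2H_0-1)})$. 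In particular $\vartheta(\eps,\theta_0)=\theta_0$, and the Jacobian of $\theta\mapsto\vartheta(\eps,\theta)$ at $\theta_0$ is
\[
J=J(\eps)=\begin{pmatrix}1 & 0\\ -2\sigma_0^2\log\eps^{-1/(2H_0-1)} & 1\end{pmatrix},
\]
so that $M(\eps,\theta_0)=\eps^{-1/(4H_0-2)}J^\top$; and since $\eps^{-1/(2H_0-1)}=S/T$ we get the identity
\[
M(\eps,\theta_0)\,T\,I(\theta_0,1)\,M(\eps,\theta_0)^\top=S\,J^\top I(\theta_0,1)\,J,
\]
which is the bridge between the two LAN normalisations.

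Next, fix $u\in\Real^2$, let $\phi=\phi(\eps)$ satisfy \eqref{condphi}, and set $w:=\phi(\eps)u$. With $\ell:=\log\eps^{-1/(2H_0-1)}\asymp\log\eps^{-1}$ one has $\det J=1$ and $\|J\|\asymp\ell$, hence $\lambda_{\min}\big(SJ^\top I(\theta_0,1)J\big)\gtrsim S\ell^{-2}\to\infty$; together with \eqref{condphi} and the identity above this forces $\|\phi(\eps)\|=O(\ell S^{-1/2})$, so $w\to 0$, and a second‑order expansion gives $\widetilde w:=\vartheta(\eps,\theta_0+w)-\theta_0=Jw+\rho$ with $\|\rho\|=O(\ell^2\|w\|^2)=O(\ell^4 S^{-1})$. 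Applying the large‑time LAN of Theorem~\ref{thm:main} to $\P^{S,1}_\cdot$ at $\theta_0$ --- local scale $S^{-1/2}I(\theta_0,1)^{-1/2}$, central statistic $Z_{S,\theta_0}\Rightarrow N(0,\mathrm{Id})$ under $\P^{S,1}_{\theta_0}$ since $S\to\infty$ --- and writing $v:=S^{1/2}I(\theta_0,1)^{1/2}\widetilde w$, we obtain from $(\mathcal T_c)_*\P^\eps_{\theta_0}=\P^{S,1}_{\theta_0}$ and $(\mathcal T_c)_*\P^\eps_{\theta_0+w}=\P^{S,1}_{\theta_0+\widetilde w}$ that
\[
\log\frac{d\P^\eps_{\theta_0+\phi(\eps)u}}{d\P^\eps_{\theta_0}}(X^\eps)=v^\top Z_{S,\theta_0}-\tfrac12\|v\|^2+r_S(v).
\]
Since $v=S^{1/2}I(\theta_0,1)^{1/2}J\phi u+S^{1/2}I(\theta_0,1)^{1/2}\rho$ with the second term $O(\ell^4 S^{-1/2})\to0$, and the first has squared norm $u^\top\phi^\top\big(SJ^\top I(\theta_0,1)J\big)\phi u=u^\top\phi^\top M(\eps,\theta_0)TI(\theta_0,1)M(\eps,\theta_0)^\top\phi u\to\|u\|^2$ by \eqref{condphi}, we get $\|v\|^2\to\|u\|^2$ and $v^\top Z_{S,\theta_0}=u^\top Z_{\eps,\theta_0}+o_{\P}(1)$ with $Z_{\eps,\theta_0}:=\phi(\eps)^\top J^\top S^{1/2}I(\theta_0,1)^{1/2}Z_{S,\theta_0}$. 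The multiplier of $Z_{S,\theta_0}$ is bounded with Gram matrix $\phi^\top\big(SJ^\top I(\theta_0,1)J\big)\phi\to\mathrm{Id}$, so a routine subsequence argument yields $Z_{\eps,\theta_0}\Rightarrow N(0,\mathrm{Id})$; this is exactly \eqref{LR}.

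The main obstacle is the remainder: the argument $v=v(\eps)$ of $r_S(\cdot)$ is not a fixed vector but only stays in a compact set, so what is really needed is the \emph{locally uniform} form of Theorem~\ref{thm:main}, namely $\sup_{\|v\|\le C}|r_S(v)|\to 0$ in $\P^{S,1}_{\theta_0}$‑probability for each $C$. This strengthening is precisely what the proof of that theorem delivers (its remainder is a finite sum of terms, each vanishing locally uniformly), and it is the only point at which more than the bare statement of Theorem~\ref{thm:main} is invoked. Everything else is the bookkeeping indicated above: every error term carries a power $\ell^p=(\log\eps^{-1})^p$ that is dominated by the polynomial factor $S\asymp\eps^{-1/(2H_0-1)}$. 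Conceptually, it is the $H_0$‑dependence of the rescaling constant $c(\eps)=\eps^{1/(2H_0-1)}$ that couples the $H$‑ and $\sigma^2$‑directions of the rescaled experiment and produces the non‑triangular matrix $M(\eps,\theta_0)$ --- equivalently, the logarithmic gap between the entrywise minimax rates foreseen in Section~\ref{sec:bac}.
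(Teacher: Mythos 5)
Your proposal is correct and takes a genuinely different (more abstract) route than the paper, although both rest on the same scaling identity. The paper (Section~\ref{sec:sn}) keeps the innovation representation \eqref{inn}--\eqref{RN}, uses Lemma~\ref{lem:s} to transfer the scaling from the kernel $g_\eps$ to the gradient process $\nabla\rho^\eps_t(X^\eps,\theta)$, and then re-derives the two sufficient convergences \eqref{une}--\eqref{doux} directly from the large-time estimates of Lemmas~\ref{lem1}--\ref{lem2}. You instead push the whole experiment forward along the sufficient bijection $\mathcal T_c$ of path space, identify $(\mathcal T_c)_*\P^\eps_\theta=\P^{S,1}_{\vartheta(\eps,\theta)}$, and transport the large-time LAN statement through this bijection and through the $\eps$-dependent reparametrisation $\theta\mapsto\vartheta(\eps,\theta)$, whose Jacobian at $\theta_0$ produces exactly the factor $J$ with $M(\eps,\theta_0)=\eps^{-1/(4H_0-2)}J^\top$. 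Your computation of $J$, the identity $M\,T\,I\,M^\top=S\,J^\top I\,J$, the bound $\|\phi(\eps)\|=O(\ell S^{-1/2})$, and the control of the second-order reparametrisation error $\rho$ are all correct; this version is shorter and makes the source of the non-triangular $M$ matrix (the $H_0$-dependence of $c(\eps)$) transparent.

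You are right to single out the one point at which the argument needs more than the bare statement of Theorem~\ref{thm:main}: since the reparametrisation depends on $\eps$, the argument $v=v(\eps)$ of the large-time remainder is not fixed but only bounded (converging along subsequences to an orthogonal image of $u$), so you need $\sup_{\|v\|\le C}|r_S(v)|\to0$ in $\P^{S,1}_{\theta_0}$-probability. This strengthening is indeed delivered by the paper's proof: \eqref{c1} is an $L^2$ limit of a $u$-free object, the remainder term in \eqref{rhodiff} is controlled via \eqref{c2} uniformly over the $\delta$-ball (hence uniformly over $\|u\|\le C$ once $T$ is large), and the central statistic $Z_T=(\eps T)^{-1/2}\int_0^T\nabla^\top\rho(X,\theta_0)\,d\overline B_t$ does not depend on $u$ at all. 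So your reduction is sound, but it would be cleaner to either state and prove this locally uniform variant of Theorem~\ref{thm:main} explicitly, or to phrase the needed input as ``\eqref{c1} and \eqref{c2} hold'' (which is literally what the paper's proof feeds into the small-noise argument as well), rather than appealing to Theorem~\ref{thm:main} as a black box.
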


Condition \eqref{condphi} cannot be satisfied by any diagonal matrix $\phi(\eps,\theta_0)$, since in this case the limit, if exists and finite, 
must be a singular matrix. Otherwise the choice of $\phi(\eps,\theta_0)$ is not unique. As explained in the previous section, 
the upper and lower triangular Cholesky factors of the matrix $M(\eps,\theta_0) I(\theta_0,1)M(\eps,\theta_0)^\top$ reveal the optimal 
minimax estimation rates for $H$ and $\sigma^2$.

\begin{cor}\label{cor1}\

\medskip
\noindent
1) For any family of estimators $\widehat H_\eps$, a loss function $\ell\in W_{2,1}$ on $\Real$ and $\delta>0$,
$$
\varliminf_{\eps\to 0}\sup_{\|\theta-\theta_0\|<\delta} \E_{\theta} \ell \big( \eps^{-1/(4H_0-2)}  (\widehat H_\eps-H)\big)
\ge \int_{\Real} \ell(x/J(\theta_0)) \gamma(x)dx,
$$
where $\gamma$ is the standard normal density on $\Real$ and 
$$
J(\theta): =\sqrt{T \Big(I_{11}(\theta,1)-\frac{I_{12}(\theta,1)^2}{I_{22}(\theta,1)}\Big)}.
$$
 
\medskip 
\noindent
2) For any family of estimators $\widehat \sigma^2_\eps$, a loss function $\ell \in W_{2,1}$ on $\Real$ and 
$\delta>0$,
$$
\varliminf_{\eps\to 0}\sup_{\|\theta-\theta_0\|<\delta} \E_{\theta} \ell \Big( \eps^{-1/(4H_0-2)} \frac 1{\log\eps^{-1}} 
\big(\widehat \sigma^2_\eps-\sigma^2\big)\Big)\ge \int_{\Real} \ell(x/J(\theta_0)) \gamma(x)dx,
$$
where $\gamma$ is the standard normal density on $\Real$ and 
$$
J(\theta) :=\frac{H -\frac 1 2}{\sigma^2}\sqrt{T \Big(I_{11}(\theta,1)-\frac{I_{12}(\theta,1)^2}{I_{22}(\theta,1)}\Big)}.
$$

\end{cor}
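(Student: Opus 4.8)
The plan is to derive both lower bounds from Theorem~\ref{thm2} by running the general argument sketched after \eqref{cond} with $h:=\eps$, the abstract matrix $M(h,\theta_0)$ taken to be $M(\eps,\theta_0)$ and $I(\theta_0)$ taken to be $T\,I(\theta_0,1)$, so that \eqref{cond} reads exactly as \eqref{condphi}. Concretely, I would let $L(\eps,\theta_0)$ be the lower triangular Cholesky factor of
$$
N(\eps,\theta_0):=M(\eps,\theta_0)\,T\,I(\theta_0,1)\,M(\eps,\theta_0)^\top
$$
and take $\phi(\eps,\theta_0)^\top:=L(\eps,\theta_0)^{-1}$; then $\phi^\top N\phi=\mathrm{Id}$ identically, so \eqref{condphi} holds, and one checks from the explicit form of $M(\eps,\theta_0)$ that this $\phi$ tends to $0$. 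Hence Theorem~\ref{thm2} gives the LAN property and Theorem~\ref{thm:HLC} applies, and, exactly as in the passage leading to \eqref{sbnd}, for every scalar loss $\widetilde\ell\in W_{2,1}$ and every $\delta>0$,
$$
\varliminf_{\eps\to0}\ \sup_{\|\theta-\theta_0\|<\delta}\E_\theta\,\widetilde\ell\big(L_{22}(\eps,\theta_0)(\widehat\sigma^2_\eps-\sigma^2)\big)\ \ge\ \int_\Real\widetilde\ell(t)\gamma(t)\,dt,
$$
while, after permuting the two coordinates of $\theta$ so that $H$ becomes the last one, the same bound holds with $\widehat\sigma^2_\eps-\sigma^2$ replaced by $\widehat H_\eps-H$ and $L_{22}(\eps,\theta_0)$ replaced by the last diagonal entry $\widetilde L_{22}(\eps,\theta_0)$ of the lower triangular Cholesky factor of $P\,N(\eps,\theta_0)\,P^\top$, $P$ the transposition matrix.

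The next step is a short linear-algebra evaluation of these two diagonal entries. Writing $\beta_\eps:=2\sigma_0^2\log\eps^{-1/(2H_0-1)}$ and $I_{ij}:=I_{ij}(\theta_0,1)$, one gets
$$
N(\eps,\theta_0)=\eps^{-1/(2H_0-1)}T\begin{pmatrix}I_{11}-2\beta_\eps I_{12}+\beta_\eps^2 I_{22} & I_{12}-\beta_\eps I_{22}\\[2pt] I_{12}-\beta_\eps I_{22} & I_{22}\end{pmatrix},\qquad \det N(\eps,\theta_0)=\eps^{-2/(2H_0-1)}T^2\big(I_{11}I_{22}-I_{12}^2\big),
$$
and for a positive definite $2\times2$ matrix the lower triangular Cholesky factor obeys $L_{22}^2=\det/L_{11}^2$ with $L_{11}^2$ the top-left entry. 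Since the top-left entry of $P\,N(\eps,\theta_0)\,P^\top$ is $\eps^{-1/(2H_0-1)}T I_{22}$, this yields the $H$-rate \emph{exactly}:
$$
\widetilde L_{22}(\eps,\theta_0)=\Big(\eps^{-1/(2H_0-1)}T\big(I_{11}-I_{12}^2/I_{22}\big)\Big)^{1/2}=\eps^{-1/(4H_0-2)}J(\theta_0),
$$
with $J$ as in part~1 (positive because $I(\theta_0,1)>0$ in \eqref{Ieps} forces the Schur complement $I_{11}-I_{12}^2/I_{22}>0$). For $\sigma^2$ the top-left entry of $N(\eps,\theta_0)$ is $\eps^{-1/(2H_0-1)}T(I_{11}-2\beta_\eps I_{12}+\beta_\eps^2 I_{22})\sim\eps^{-1/(2H_0-1)}T\beta_\eps^2 I_{22}$ as $\eps\to0$, whence
$$
L_{22}(\eps,\theta_0)=\big(1+o(1)\big)\frac{\eps^{-1/(4H_0-2)}}{\beta_\eps}\Big(T\big(I_{11}-I_{12}^2/I_{22}\big)\Big)^{1/2}=\big(1+o(1)\big)\,\eps^{-1/(4H_0-2)}\,\frac{J(\theta_0)}{\log\eps^{-1}},
$$
with $J$ now as in part~2, using $\beta_\eps^{-1}=\tfrac{H_0-1/2}{\sigma_0^2}(\log\eps^{-1})^{-1}$; it is exactly here that $\log\eps^{-1}$ enters the $\sigma^2$-rate.

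The last step feeds these asymptotics into the minimax bounds by rescaling the loss. For part~1 I would take $\widetilde\ell(t):=\ell(t/J(\theta_0))$, again in $W_{2,1}$; since $\widetilde L_{22}(\eps,\theta_0)=\eps^{-1/(4H_0-2)}J(\theta_0)$ \emph{exactly}, the permuted bound of the first step becomes verbatim the statement of part~1. For part~2 the factor $1+o(1)$ in $L_{22}(\eps,\theta_0)$ must be absorbed: fixing $\eta>0$ and taking $\widetilde\ell(t):=\ell(t/((1+\eta)J(\theta_0)))\in W_{2,1}$, for all small $\eps$ one has $L_{22}(\eps,\theta_0)\le(1+\eta)\,\eps^{-1/(4H_0-2)}(\log\eps^{-1})^{-1}J(\theta_0)$, so, using that every $\ell\in W_{2,1}$ is nondecreasing along rays (a consequence of $\ell(0)=0$, the symmetry of $\ell$ and the convexity of its sub-level sets),
$$
\widetilde\ell\big(L_{22}(\eps,\theta_0)(\widehat\sigma^2_\eps-\sigma^2)\big)\ \le\ \ell\Big(\eps^{-1/(4H_0-2)}\tfrac{1}{\log\eps^{-1}}(\widehat\sigma^2_\eps-\sigma^2)\Big);
$$
combining this with the first-step bound and then letting $\eta\downarrow0$ — dominated convergence being justified by the growth condition in the definition of $W_{2,1}$ — yields part~2. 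The whole argument rests on Theorem~\ref{thm2}; the points that need care are the asymptotic evaluation of the diagonal Cholesky entries (in particular recognizing that $\log\eps^{-1}$ enters the $\sigma^2$-rate solely through the growth of $\beta_\eps^2$ in the top-left entry of $N(\eps,\theta_0)$) and the transfer of H\'ajek's bound through the loss rescaling, the slightly delicate piece being the dilation-and-limit device used to absorb the $1+o(1)$ in $L_{22}$.
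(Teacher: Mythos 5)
Your proposal is correct and takes essentially the same route as the paper's own proof: fix $\phi(\eps,\theta_0)^\top$ via the (lower or permuted/upper) Cholesky factor of $M(\eps,\theta_0)\,T\,I(\theta_0,1)\,M(\eps,\theta_0)^\top$, read off the relevant diagonal entry, and apply H\'ajek's bound to a loss constant in the complementary coordinate. You are somewhat more careful than the paper in two places — noting that the $H$-rate Cholesky entry is exact rather than asymptotic, and spelling out the dilation-and-limit device (nondecrease of $\ell\in W_{2,1}$ along rays plus letting $\eta\downarrow0$) used to absorb the $1+o(1)$ in the $\sigma^2$-rate — both of which the paper leaves implicit.
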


If only one parameter is to be estimated, while the other one is known, the relevant LAN property corresponds to the respective
one-dimensional family. The following theorem shows that the optimal minimax rates is these cases improve by a logarithmic factor.

\goodbreak

\begin{thm}\label{thm3}\

\medskip
\noindent
1) For any fixed $\sigma_0^2\in \Real_+$, the family $\Big(\P^\eps_{(H,\sigma^2_0)}\Big)_{H\in (\frac 34,1)}$ is LAN at any $H_0\in (\frac 3 4 ,1)$
as $\eps\to 0$ with 
\begin{equation}\label{rateH}
\phi(\eps,H_0) := \eps^{1/(4H_0-2)}\frac 1 {\log \eps^{-1}} \frac{H_0-\frac 1 2}{ \sigma^2_0}\frac 1 {\sqrt{TI_{22}(\theta_0,1)}}.
\end{equation}

\medskip
\noindent
2) For any fixed $H_0\in (\frac 34,1)$, the family $\Big(\P^\eps_{(H_0,\sigma^2)}\Big)_{\sigma^2\in (0,\infty)}$ is LAN at any $\sigma^2_0\in \Real_+$ as $\eps\to 0$ with 
$$
\phi(\eps,\sigma_0) := \eps^{1/(4H_0-2)}\frac 1 {\sqrt{TI_{22}(\theta_0,1)}}.
$$
\end{thm}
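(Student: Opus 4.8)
The plan is to deduce both statements from Theorem \ref{thm2} by specialising the two-dimensional local expansion to perturbations along the two coordinate axes. The proof of Theorem \ref{thm2} provides, in the natural local parametrisation $\theta=\theta_0+\delta$, an expansion
$$
\log \frac{d\P^\eps_{\theta_0+\delta}}{d\P^\eps_{\theta_0}}(X^\eps)=\langle\delta,W_\eps\rangle-\tfrac12\langle\delta,\J_\eps\,\delta\rangle+o_{\P^\eps_{\theta_0}}(1),
\qquad \J_\eps:=M(\eps,\theta_0)\,T\,I(\theta_0,1)\,M(\eps,\theta_0)^\top,
$$
which one may take to hold uniformly over deterministic directions $\delta=\delta(\eps)\to0$ with $\langle\delta,\J_\eps\delta\rangle$ bounded, the score vector $W_\eps$ satisfying $AW_\eps\Rightarrow N(0,\lim A\J_\eps A^\top)$ under $\P^\eps_{\theta_0}$ for every deterministic $A=A(\eps)$ for which $A\J_\eps A^\top$ converges; this is the content underlying the equivalent formulation \eqref{condphi}. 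The sub-family $(\P^\eps_{(H,\sigma^2_0)})_H$ is the case $\delta=(t,0)^\top$ and the sub-family $(\P^\eps_{(H_0,\sigma^2)})_{\sigma^2}$ the case $\delta=(0,t)^\top$; in each, the scalar norming is $\phi(\eps)=[\J_\eps]_{jj}^{-1/2}$ (or any asymptotically equivalent quantity) and $Z_\eps=\phi(\eps)[W_\eps]_j\Rightarrow N(0,1)$ because $\phi(\eps)^2[\J_\eps]_{jj}\to1$, by the above convergence of the score with $A=(\phi(\eps),0)$, respectively $A=(0,\phi(\eps))$.

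It remains to compute the relevant diagonal entries of $\J_\eps$. Writing $M(\eps,\theta_0)=c_\eps\bigl(\begin{smallmatrix}1&b_\eps\\0&1\end{smallmatrix}\bigr)$ with $c_\eps=\eps^{-1/(4H_0-2)}$ and $b_\eps=-2\sigma^2_0\log\eps^{-1/(2H_0-1)}=-\tfrac{2\sigma^2_0}{2H_0-1}\log\eps^{-1}$, and abbreviating $I_{ij}=I_{ij}(\theta_0,1)$, one gets
$$
\J_\eps=Tc_\eps^2\begin{pmatrix}I_{11}+2b_\eps I_{12}+b_\eps^2 I_{22}&I_{12}+b_\eps I_{22}\\[2pt]I_{12}+b_\eps I_{22}&I_{22}\end{pmatrix}.
$$
Hence $[\J_\eps]_{22}=Tc_\eps^2 I_{22}$ exactly, so $[\J_\eps]_{22}^{-1/2}=\eps^{1/(4H_0-2)}\bigl(TI_{22}(\theta_0,1)\bigr)^{-1/2}$, which is the norming $\phi(\eps,\sigma_0)$ of part 2). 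Since $|b_\eps|\to\infty$ one has $[\J_\eps]_{11}=Tc_\eps^2 b_\eps^2 I_{22}\,(1+o(1))$, so $[\J_\eps]_{11}^{-1/2}$ is asymptotically equivalent to $c_\eps^{-1}|b_\eps|^{-1}(TI_{22})^{-1/2}=\eps^{1/(4H_0-2)}\tfrac{1}{\log\eps^{-1}}\tfrac{H_0-\frac12}{\sigma^2_0}\bigl(TI_{22}(\theta_0,1)\bigr)^{-1/2}$, which is the norming \eqref{rateH} of part 1); since the LAN property is preserved under replacing the norming by an asymptotically equivalent one, part 1) follows.

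I do not expect a genuine obstacle beyond routine verification: the entire analytic core — the spectral, Szeg\H{o}-type asymptotics producing $I(\theta_0,1)$ and the renormalising role of $M(\eps,\theta_0)$ — is already established for Theorem \ref{thm2}, and what is needed here is only (i) the (standard, and readable off that proof) local uniformity of the expansion in $\delta$, applied to the $\eps$-dependent directions $\delta(\eps)=(\phi(\eps,H_0)v,0)^\top$ and $(0,\phi(\eps,\sigma_0)v)^\top$, whose admissibility is immediate from $\langle\delta(\eps),\J_\eps\delta(\eps)\rangle=v^2(1+o(1))$, and (ii) the bookkeeping of the entries of $\J_\eps$. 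The only conceptual point worth highlighting is that the one-parameter rates are governed by the \emph{marginal} informations $[\J_\eps]_{jj}$, in contrast with the Schur complements $[\J_\eps]_{11}-[\J_\eps]_{12}^2/[\J_\eps]_{22}$ and $[\J_\eps]_{22}-[\J_\eps]_{12}^2/[\J_\eps]_{11}$ which determine the joint rates of Corollary \ref{cor1}: the $b_\eps$-terms cancel in the Schur complement relevant to $H$ but survive in $[\J_\eps]_{11}$, and the opposite happens for $\sigma^2$, which is precisely the origin of the announced logarithmic gain.
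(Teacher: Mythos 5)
Your proposal is correct and follows essentially the same route as the paper: you restrict the two-dimensional LAN expansion of Theorem \ref{thm2} to the coordinate direction $e_j$, observe that the analogue of \eqref{condphi} for a scalar norming becomes $\phi(\eps)^2\,[M T I M^\top]_{jj}\to 1$, and read off the rate from the diagonal entries of $\J_\eps=M(\eps,\theta_0)TI(\theta_0,1)M(\eps,\theta_0)^\top$. The paper's proof is phrased in terms of the likelihood ratio \eqref{LReps} with a diagonal $\phi(\eps,\theta_0)$ and $u$ restricted to $\{u_1e_1\}$ or $\{u_2e_2\}$, and notes that \eqref{une} reduces to the relevant scalar condition while \eqref{doux} persists; this is the same argument, just written directly rather than through a local expansion in $\delta$. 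Your closing remark — that one-parameter rates are governed by the marginal entries $[\J_\eps]_{jj}$ while the Schur complements controlling Corollary \ref{cor1} behave oppositely with respect to the $b_\eps$-terms, which is exactly where the logarithmic gain comes from — is a correct and useful observation about the source of the gap, though it is not spelled out in the paper's own proof.
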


\section{A discussion}\label{sec:dis}

\subsection{On the information matrix}

The expression for the Fisher information matrix in \eqref{Ieps} is known as Whittle's formula. 
It was discovered by P.\,Whittle \cite{W53, W62} and was originally derived for discrete time stationary 
Gaussian processes with continuous spectral densities, see also \cite{Walker64}. 
Its validity was extended in \cite{D89, D89c} to processes with long range dependence, for which the spectral density has
an integrable singularity at the origin.

Whittle's formula in continuous time is a more subtle matter due to complexity of the absolute continuity relation
between Gaussian measures on function spaces.
In fact, according to the survey \cite{DY83}, it was never rigorously verified beyond processes with rational spectra.
One important class for which further generalization is plausible are processes observed with additive ``white noise'', that is, 
$$
X_t = Z_t + B_t, \quad t\in [0,T],
$$
where $B$ is a standard Brownian motion and $Z$ is a centered Gaussian process with stationary increments.
The mixed fBm is a special case from this class. 

Results in \cite{Sh66} imply that the probability measure  induced by $X$ is equivalent to the Wiener measure 
if and only if 
$$
\E Z_t Z_s = \int_0^t \int_0^s K_\theta (u-v)dudv 
$$
for some kernel $K_\theta\in L^2([0,T])$. In this case, the Radon-Nikodym derivative  has the same form as in \eqref{RN}.
Using the theory of finite section approximation from \cite{GF74} it is indeed possible to prove Theorem \ref{thm:main} 
for such processes under the additional, crucial to the approach of \cite{GF74}, assumption $K\in L^1(\Real)$. 

This assumption is violated by the kernel \eqref{KH}, which makes the method of \cite{GF74} inapplicable.  
This is not entirely surprising in view of the difficulties, 
needed to be overcome in \cite{D89} to extend Whittle's theory to discrete time processes with the long range dependence. 
The results in our paper are proved using a different approach, based on the ideas from \cite{Ukai} and 
the recent applications to processes with the fractional covariance structure \cite{ChK}.

\subsection{On the joint and separate estimation}
Logarithmic discrepancy in the minimax rates between joint and separate estimation as in Corollary \ref{cor1} and Theorem \ref{thm3} 
is known to occur in the high-frequency regime in experiments with discrete data such as \eqref{BDelta}. 
The optimal rates for the separate estimation of $H$ and $\sigma^2$ for  $\Delta = T/n$ are 
$$
n^{-1/2}\frac  1{\log n}\quad \text{and}\quad n^{-1/2}
$$
respectively, see \cite{K13} and references therein.
These rates are achievable, e.g., by estimators based on discrete power variations as in \cite{IL97}, \cite{KW97}, \cite{C01}. 

It was long noticed that analogous estimators achieve slower rates, degraded by logarithmic factor,
\begin{equation}\label{rjoint}
n^{-1/2}\quad \text{and}\quad n^{-1/2}\log n,
\end{equation} 
when both parameters are unknown. 
These rates were recently proved minimax optimal in \cite{BF17}, 
where the LAN property was shown to hold with a non-diagonal matrix $M(h,\theta_0)$ in \eqref{cond}.

High frequency estimation from the noisy data \eqref{BDeltaxi} was considered in \cite{GH07}, 
where the optimal minimax rates for joint estimation of $H>\frac 12 $ and $\sigma^2$ were found to be 
$$
n^{-1/(4H+2) }\quad \text{and}\quad  n^{-1/(4H+2)}\log n,
$$ 
respectively. These rates are slower than those in \eqref{rjoint}, confirming the intuition that noise should make the 
estimation problem harder.
Estimators for the mixed fBm with $H\le \frac 34$ in the high frequency regime were constructed using the power variations 
technique in \cite{DMS14}.

\subsection{Rate optimal estimators}\label{subsec:1}

It is plausible that the local minimax lower bounds guaranteed by the LAN property of Theorems \ref{thm:main} - \ref{thm3}
are attained by the Maximum Likelihood or the Bayes estimators with positive prior densities. Proving such asymptotic efficiency 
would require analysis which goes beyond the scope of this paper.  
However these estimators are of little practical interest, since they require solving numerically the integral 
equation \eqref{eq} and approximating the stochastic integrals in \eqref{RN}. Simpler rate optimal estimators can be constructed 
otherwise as explained below. 

\subsubsection{Large time}

A simpler alternative is to base estimation on the discrete data $X_{k\Delta}$, $k=1,...,[T/\Delta]$ 
with a small discretization step $\Delta>0$. In particular, one can use the discrete likelihood based estimators 
or the simpler Whittle's spectral estimator. The theory from \cite{D89} implies that such estimators achieve the optimal 
$T^{-1/2}$ rate and, moreover, the limit risks can be made arbitrarily close to the bound provided by Theorem \ref{thm:main} 
by choosing $\Delta$ small enough.
 
\subsubsection{Small noise}
 
In the small noise regime the optimal rates of Corollary \ref{cor1} and Theorem \ref{thm3}
can be achieved by a modification of the estimator suggested in \cite{GH07}. Let us briefly sketch the 
idea. Take any mother wavelet function $\psi$ with compact support and two vanishing moments. Define
its translates and dilations   
$$
\psi_{j,k}(t) = 2^{j/2} \psi(2^j t-k), \quad j\in \mathbb N,\ k\in \mathbb Z. 
$$
Consider the wavelet coefficients of $\sigma B^H$  
\begin{equation}\label{djk}
d_{j,k} = \int_\Real \psi_{j,k}(t)\sigma dB^H_t
\end{equation}
and define the energy of the $j$-th resolution level
$$
Q_j = \sum_{k=0}^{2^{j-1}-1} d_{j,k}^2.
$$
Standard calculations show that these random variables satisfy  
\begin{equation}\label{dva}
Q_j = \frac {\sigma^2} 2 c_H(\psi)  2^{  j (2-2H)} + O_\P\big(2^{-j(4H-3)/2  }\big),\quad j\to \infty,
\end{equation}
where 
$$
c_H(\psi) = \int_{\Real}\int_{\Real} \psi(u)\psi(v)H(2H-1)|u-v|^{2H-2}dudv.
$$
Consequently,
\begin{equation}\label{raz}
\frac{Q_{j+1}}{Q_j}= 2^{2-2H} + O_\P(2^{-j/2}),  \quad j\to \infty.
\end{equation}

Natural estimators for the wavelet coefficients are obtained by replacing the fBm in \eqref{djk} with its noisy observation 
$$
\widetilde d_{j,k} := \int_\Real \psi_{j,k}(t)  dX_t.
$$
Since $\E \widetilde d_{j,k}^2 = \E d_{j,k}^2 + \eps \|\psi\|^2$ it makes sense to estimate $d_{j,k}^2$ by
$$
\widehat {d_{j,k}^2} = (\widetilde d_{j,k})^2-\eps \|\psi\|^2
$$
and, accordingly,  
$$
\widehat Q_j =\sum_{k=0}^{2^{j-1}-1} \widehat {d_{j,k}^2}.
$$
In view of \eqref{raz}, the method of moments suggests the estimators 
$$
\widehat H_j = 1-\frac 1 2 \log_2 \frac{\widehat Q_{j+1}}{\widehat Q_j}.
$$ 
The bias of these estimators decreases with $j$ whereas their variance increases. 
In view of the residual in \eqref{raz} and the optimal rate, known from Corollary \ref{cor1}, 
it is reasonable to suggest that the optimal choice of $j$ should be such that
\begin{equation}\label{oracle}
2^{-j/2} = \eps^{1/ (4H-2)}.
\end{equation}
This choice is only an ``oracle'' since it requires $H$ to be known. To mimic this choice of $j$, asymptotics \eqref{dva} can be used again. 
To this end \eqref{oracle} can be rewritten as $2^{ j (2-2H)}= 2^j \eps$, which, in view of \eqref{dva}, suggests the selector
$$
J^*_\eps = \max\big\{\underline J \le j\le J_\eps: \widehat Q_j \ge 2^j \eps\big\},
$$
where $J_\eps=[2 \log_2\eps^{-1}]$ and $\underline J$ is an arbitrary nonessential constant. 
It can be shown that with high probability $J^*_\eps$ will be close to $\frac 1 {2H-1}\log_2 \eps^{-1}< J_\eps$ and  
the ultimate estimator is set to be 
\begin{equation}\label{Hhat}
\widehat H(\eps) := \widehat H_{J^*_\eps}.
\end{equation}

\begin{prop}
The estimation error
$
\eps^{- 1/(4H-2)}(\widehat H(\eps) -H)
$
is bounded in $\P_\theta$-probability, uniformly over compacts in $\Theta$, as $\eps\to 0$. 
\end{prop}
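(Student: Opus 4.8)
The plan is to control the three error sources in the statistic $\widehat Q_j$, to show that the data-driven level $J^*_\eps$ concentrates around the ``oracle'' level $j_\eps:=(2H-1)^{-1}\log_2\eps^{-1}$ (which lies strictly below $J_\eps$ precisely because $H>\tfrac34$, so that $(2H-1)^{-1}<2$), and to transfer this concentration into the claimed rate for $\widehat H(\eps)$. First I would set $q_j:=\tfrac{\sigma^2}{2}c_H(\psi)\,2^{j(2-2H)}$, the leading term of $\E Q_j$ in \eqref{dva}, write $g_{j,k}:=\int_\Real\psi_{j,k}\,dB$ so that $\widetilde d_{j,k}=d_{j,k}+\sqrt\eps\,g_{j,k}$, and record the decomposition
\[
\widehat Q_j=q_j+\underbrace{\big(Q_j-q_j\big)}_{(\mathrm a)}+\underbrace{\,2\sqrt\eps\,{\textstyle\sum_k} d_{j,k}g_{j,k}\,}_{(\mathrm b)}+\underbrace{\,\eps\,{\textstyle\sum_k}\big(g_{j,k}^2-\|\psi\|^2\big)\,}_{(\mathrm c)}=:q_j\big(1+\rho_j\big).
\]
Term $(\mathrm a)$ is a centered quadratic form in the Gaussian vector $(d_{j,k})_k$ whose within-scale covariances decay fast in $|k-k'|$ thanks to the vanishing moments of $\psi$ (this is the computation behind \eqref{dva}--\eqref{raz}); combined with the Hanson--Wright inequality it yields a sub-exponential deviation bound for $(\mathrm a)$ at scale $2^{-j(4H-3)/2}$. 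Conditionally on $B^H$, term $(\mathrm b)$ is Gaussian with variance $\asymp\eps Q_j$, while $(\mathrm c)$ is a sum of $2^{j-1}$ i.i.d.\ centered sub-exponential variables of variance $\asymp\eps^2$, so a Gaussian tail bound and Bernstein's inequality apply respectively; all constants are uniform in $j$ and depend continuously on $\theta$, hence are uniform over compacts of $\Theta$.

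Next I would localise the selector. With $D_j:=\widehat Q_j-2^j\eps$ and the identity $q_j/(2^j\eps)=\tfrac{\sigma^2}{2}c_H(\psi)\,2^{(2H-1)(j_\eps-j)}$,
\[
\frac{D_j}{2^j\eps}=\tfrac{\sigma^2}{2}c_H(\psi)\,2^{(2H-1)(j_\eps-j)}\big(1+\rho_j\big)-1 .
\]
The deterministic prefactor crosses the value $1$ at a level $j_\eps+O(1)$, so for $C$ large it exceeds $1+c$ when $j\le j_\eps-C$ and is below $1-c$ when $j\ge j_\eps+C$, for some $c>0$. A direct substitution $j=j_\eps$ into $(\mathrm a)$--$(\mathrm c)$ shows that each contributes to $\rho_{j_\eps}$ only at the relative scale $2^{-j_\eps/2}=\eps^{1/(4H-2)}=o(1)$; as the three relative contributions are monotone in $j$, on any fixed window $\max_{|j-j_\eps|\le A}|\rho_j|=O_\P(\eps^{1/(4H-2)})$. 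Hence for the $O(\log\eps^{-1})$ levels $j_\eps+C\le j\le J_\eps$ one has $\P_\theta(D_j\ge 0)\le\P_\theta\big(|(\mathrm a)|+|(\mathrm b)|+|(\mathrm c)|\ge c\,2^j\eps\big)$, and since the scale of each remainder is $o(2^j\eps)$ there while $2^j\ge 2^{j_\eps}\asymp\eps^{-1/(2H-1)}\to\infty$, the tail bounds above make each such probability super-polynomially small in $\eps^{-1}$ (the random variance of $(\mathrm b)$ being dealt with by conditioning and the $(\mathrm a)$-bound); summing over these levels and adding the single estimate $\P_\theta(D_{j_\eps-C}\le 0)\to 0$ gives $\P_\theta\big(|J^*_\eps-j_\eps|>C\big)\to 0$, uniformly over compacts, i.e.\ $J^*_\eps=j_\eps+O_\P(1)$.

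It remains to extract the rate. On the event $\{|J^*_\eps-j_\eps|\le C\}$ both $J^*_\eps$ and $J^*_\eps+1$ lie in the deterministic window $\{|j-j_\eps|\le C+1\}$, on which $\max|\rho_j|=O_\P(\eps^{1/(4H-2)})=o_\P(1)$; in particular every $\widehat Q_j$ below is positive and, since $q_{j+1}/q_j=2^{2-2H}$ exactly,
\[
\widehat H(\eps)-H=\widehat H_{J^*_\eps}-H=-\tfrac12\log_2\frac{1+\rho_{J^*_\eps+1}}{1+\rho_{J^*_\eps}},
\]
so that $|\widehat H(\eps)-H|\le C'\big(|\rho_{J^*_\eps+1}|+|\rho_{J^*_\eps}|\big)$ on that event. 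Given $\eta>0$, first choose $C$ with $\P_\theta(|J^*_\eps-j_\eps|>C)<\eta/2$ for small $\eps$, then $M$ with $\P_\theta\big(\eps^{-1/(4H-2)}\max_{|j-j_\eps|\le C+1}|\rho_j|>M\big)<\eta/2$; combining shows $\eps^{-1/(4H-2)}(\widehat H(\eps)-H)$ is bounded in $\P_\theta$-probability, and uniformity over a compact $K\subset\Theta$ follows since $c_H(\psi)$, the covariance decay and the exponents all depend continuously on $\theta$ while $K$ stays away from $\{H=\tfrac34\}$, $\{H=1\}$ and $\{\sigma^2=0\}$. I expect the localisation step to be the main obstacle: it needs concentration of $(\mathrm a)$, $(\mathrm b)$, $(\mathrm c)$ strong enough to survive the union bound over the growing family of $O(\log\eps^{-1})$ candidate levels, together with the delicate bookkeeping at the transition level $j_\eps$ showing that all fluctuations there are only of relative order $\eps^{1/(4H-2)}$, so that the sign of $D_j$ is governed by its deterministic part.
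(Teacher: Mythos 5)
The paper offers no proof here beyond the one-line citation ``(adaptation of \cite{GH07})'', and your argument is precisely that adaptation carried out in full: the same decomposition of $\widehat Q_j$ into the fBm-fluctuation, cross, and pure-noise terms, the same bookkeeping that all three are of relative order $2^{-j_\eps/2}=\eps^{1/(4H-2)}$ on a fixed window around the oracle level $j_\eps$, the same two-sided localisation of the data-driven level $J^*_\eps$ via the sign of $D_j=\widehat Q_j-2^j\eps$, and the same final extraction of the rate from $\widehat H_j-H=-\tfrac12\log_2\big((1+\rho_{j+1})/(1+\rho_j)\big)$; it is correct in outline and essentially complete. One small repair: the within-scale noise variables $g_{j,k}$ are $m$-dependent (with $m$ determined by the support length of $\psi$), not i.i.d., so for term $(\mathrm c)$ you should invoke Bernstein's inequality in its blocking form for $m$-dependent sub-exponential sums rather than the i.i.d.\ version -- this changes no order of magnitude and in particular does not affect the union bound over the $O(\log\eps^{-1})$ levels above $j_\eps+C$.
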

\begin{proof}
(adaptation of \cite{GH07}).
\end{proof}

\begin{rem}
In the context of Theorem \ref{thm2}, this result implies rate optimality for a particular class of loss functions of the form 
$$
\ell_M(u)= (|u|-M)^+ \wedge 1\le  \one{|u|\ge M},
$$
since the above estimator satisfies 
$$
\varlimsup_{\eps\to 0}\sup_{\theta \in K}\E_{\theta} \ell_M\Big(\eps^{- 1 /(4H-2)}(\widehat H(\eps)-H)\Big) < 1
$$
for any compact $K\subset \Theta$ and all $M$ large enough.
\end{rem} 

Estimation of $\sigma^2$ can be based on \eqref{dva} as well. The method of \cite{GH07} implies that
the estimator 
$$
\widehat \sigma^2(\eps) =  \frac{2}{c_{\widehat H(\eps)}(\psi)} \frac{\widehat Q_{J^*_\eps}}{2^{J^*_\eps(2-2\widehat H(\eps))}},
$$
where $\widehat H(\eps)$ is defined in \eqref{Hhat}, is rate optimal.

\begin{prop} 
The estimation error
$$
\eps^{- 1/(4H-2)}\frac 1 {\log \eps^{-1}}\big(\widehat \sigma^2(\eps) -\sigma^2\big)
$$
is bounded in $\P_{\theta}$-probability, uniformly over compacts in $\Theta$, as $\eps\to 0$. 
\end{prop}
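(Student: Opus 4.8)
The plan is to follow the argument of \cite{GH07} and track how the error of $\widehat H(\eps)$ propagates into $\widehat\sigma^2(\eps)$ through the two places it enters --- the smooth prefactor $c_{\widehat H(\eps)}(\psi)$ and, crucially, the exponent $2-2\widehat H(\eps)$ in the denominator. Write $\widehat H:=\widehat H(\eps)$, $J^*:=J^*_\eps$, and use $2^{-J^*(2-2\widehat H)}=2^{-J^*(2-2H)}2^{2J^*(\widehat H-H)}$ to get the factorization
$$
\frac{\widehat\sigma^2(\eps)}{\sigma^2}=\underbrace{\frac{c_H(\psi)}{c_{\widehat H}(\psi)}}_{=:A_\eps}\cdot\underbrace{\frac{2\,\widehat Q_{J^*}}{\sigma^2 c_H(\psi)\,2^{J^*(2-2H)}}}_{=:B_\eps}\cdot\underbrace{2^{\,2J^*(\widehat H-H)}}_{=:C_\eps}.
$$
It then suffices to show, uniformly over compacts $K\subset\Theta$, that $A_\eps-1$ and $B_\eps-1$ are $O_\P\big(\eps^{1/(4H-2)}\big)$ while $C_\eps-1$ is $O_\P\big(\eps^{1/(4H-2)}\log\eps^{-1}\big)$, since then $\widehat\sigma^2(\eps)/\sigma^2-1=O_\P\big(\eps^{1/(4H-2)}\log\eps^{-1}\big)$, which is the claim.

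Two preliminaries, both uniform over $K$, are needed. First, a version of \eqref{dva} valid uniformly in $j$ over the deterministic window $\underline J\le j\le J_\eps$, $J_\eps=[2\log_2\eps^{-1}]$: decomposing $\widehat{d_{j,k}^2}-d_{j,k}^2=2\sqrt\eps\,d_{j,k}\eta_{j,k}+\eps\big(\eta_{j,k}^2-\|\psi\|^2\big)$ with $\eta_{j,k}=\int_\Real\psi_{j,k}dB$, summing over the finitely overlapping (hence finite-range dependent) translates, and applying maximal inequalities for Gaussian quadratic forms, one obtains
$$
\widehat Q_j=\tfrac{\sigma^2}{2}c_H(\psi)\,2^{j(2-2H)}\big(1+R_{j,\eps}\big),\qquad \max_{\underline J\le j\le J_\eps}\frac{|R_{j,\eps}|}{2^{-j/2}+\sqrt\eps\,2^{-j(1-H)}+\eps\,2^{j(2H-3/2)}}=O_\P(1),
$$
where the three denominator terms come respectively from the residual in \eqref{dva}, the cross noise term and the chi-square noise term. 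Second, the concentration of the selector already announced in the text: from the display above, the main term $\tfrac{\sigma^2}{2}c_H(\psi)2^{j(2-2H)}$ crosses $2^j\eps$ at $j\sim\tfrac1{2H-1}\log_2\eps^{-1}$ (and $\tfrac1{2H-1}<2$ because $H>\tfrac34$, so the crossover lies below $J_\eps$), and since the noise is negligible relative to $2^j\eps$ near and above the crossover, the event $\{\widehat Q_j\ge 2^j\eps\}$ holds for all $j$ slightly below it and fails for all $j$ slightly above it with probability tending to one; hence $J^*=\tfrac1{2H-1}\log_2\eps^{-1}+O_\P(1)$ (in particular $J^*\to\infty$), uniformly over $K$.

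Consequently $2^{-J^*}=\eps^{1/(2H-1)}\cdot 2^{O_\P(1)}$, and substituting this into the bound on $R_{j,\eps}$ makes each of the three denominator terms equal to $\eps^{1/(4H-2)}\cdot 2^{O_\P(1)}$ (the exponent identities $\tfrac12+\tfrac{1-H}{2H-1}=1-\tfrac{2H-3/2}{2H-1}=\tfrac1{4H-2}$ being used); therefore $B_\eps-1=R_{J^*,\eps}=O_\P\big(\eps^{1/(4H-2)}\big)$. For $A_\eps$ and $C_\eps$ we invoke the preceding Proposition, which gives $\widehat H-H=O_\P\big(\eps^{1/(4H-2)}\big)$ uniformly over $K$. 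Since $H\mapsto c_H(\psi)$ is smooth and bounded away from zero on every compact subinterval of $(\tfrac34,1)$, and $\widehat H$ stays in such a subinterval with probability tending to one, a mean-value bound gives $A_\eps-1=O_\P\big(\eps^{1/(4H-2)}\big)$. For $C_\eps$, since $J^*=O_\P(\log\eps^{-1})$, the exponent $2J^*(\widehat H-H)\log 2=O_\P\big(\eps^{1/(4H-2)}\log\eps^{-1}\big)=o_\P(1)$, so expanding the exponential,
$$
C_\eps-1=2J^*(\widehat H-H)\log 2+O_\P\Big(\big(\eps^{1/(4H-2)}\log\eps^{-1}\big)^2\Big)=O_\P\big(\eps^{1/(4H-2)}\log\eps^{-1}\big),
$$
and combining the three bounds yields the assertion.

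The main obstacle is that $\widehat\sigma^2(\eps)$ is evaluated at the data-driven level $J^*_\eps$, so the asymptotics \eqref{dva}--\eqref{raz}, which are stated for deterministic $j\to\infty$, cannot be quoted directly; the real work is to upgrade \eqref{dva} to an estimate holding uniformly over the deterministic window $\underline J\le j\le J_\eps$ that contains $J^*_\eps$ with high probability (this is where the Gaussian maximal inequalities enter) and to pin down $J^*_\eps$ to within $O_\P(1)$ of $\tfrac1{2H-1}\log_2\eps^{-1}$, all with the required uniformity over compacts in $\Theta$. Once these are in place the propagation of the $\widehat H$-error is routine, the only qualitative point being that it is the exponent factor $C_\eps$ --- not the prefactor $A_\eps$ nor the measurement noise in $B_\eps$ --- that carries the extra $\log\eps^{-1}$ and thereby dictates the rate.
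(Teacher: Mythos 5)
The paper itself gives no proof for this proposition --- the text just states that ``the method of \cite{GH07} implies'' rate optimality, with the previous proposition carrying the placeholder ``(adaptation of \cite{GH07})''. Your write-up is therefore not so much an alternative to the paper's proof as a concrete realization of the adaptation the paper leaves implicit, and as such it is on the right track and correctly structured.

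The decomposition $\widehat\sigma^2(\eps)/\sigma^2 = A_\eps B_\eps C_\eps$, with $A_\eps$ the smooth prefactor ratio, $B_\eps$ the noise/discretization error at level $J^*$, and $C_\eps = 2^{2J^*(\widehat H - H)}$ the exponent factor, is exactly the right way to see where the extra $\log\eps^{-1}$ comes from: $A_\eps - 1$ and $B_\eps - 1$ are $O_\P(\eps^{1/(4H-2)})$ while $C_\eps - 1 = O_\P(\eps^{1/(4H-2)}\log\eps^{-1})$ because $J^* = O_\P(\log\eps^{-1})$, and this is consistent with the non-diagonal $M(\eps,\theta)$-structure of Corollary \ref{cor1}. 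Your exponent bookkeeping checks out: the three terms in the denominator of the relative error, $2^{-j/2}$ (fluctuation of $Q_j$), $\sqrt\eps\,2^{-j(1-H)}$ (cross term), and $\eps\,2^{j(2H-3/2)}$ (chi-square noise), all collapse to $\eps^{1/(4H-2)}\cdot 2^{O_\P(1)}$ at $j = J^*\!\sim\!\tfrac{1}{2H-1}\log_2\eps^{-1}$ via the identities $\tfrac12\cdot\tfrac1{2H-1} = \tfrac12+\tfrac{1-H}{2H-1} = 1-\tfrac{2H-3/2}{2H-1} = \tfrac1{4H-2}$, confirming that the selector sits at the balance point and that $B_\eps - 1 = O_\P(\eps^{1/(4H-2)})$. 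The two bits of work you flag as ``the main obstacle'' --- upgrading \eqref{dva}--\eqref{raz} to uniform-in-$j$ bounds via Gaussian maximal inequalities, and pinning $J^*_\eps$ to within $O_\P(1)$ of $\tfrac1{2H-1}\log_2\eps^{-1}$ uniformly over compacts --- are indeed the only substantive estimates; these are precisely what is established in the high-frequency setting of \cite{GH07}, and the adaptation to the small-noise setting is the straightforward substitution $n\leftrightarrow\eps^{-1}$ you carry out. One small remark worth adding: since $R_{J^*,\eps}$ is evaluated at the random level $J^*$, to pass from the deterministic-window maximal bound to the $O_\P$ bound at $J^*$ you also need that shifting $j$ by $O_\P(1)$ multiplies each of the three denominator terms by only $2^{O_\P(1)}$, which is immediate since they are all geometric in $j$. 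With that said, the proposal is correct.
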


Similarly, the estimators 
$$
\widetilde  \sigma^2(\eps)  = \frac  2 { c_H(\psi) } \frac{ \widehat Q_{j_\eps}(\eps) }{2^{   j_\eps (2-2H)}}
$$
with $j_\eps = \left[\frac 1 {2H-1} \log \eps^{-1}\right]$ and 
$$
\widetilde H(\eps)    =      1-  \frac 1 {2 J^*_\eps} \log_2 \left(\frac 2 {\sigma^2 c_{\widehat H(\eps)} (\psi)}  Q_{J^*_\eps}\right)
$$
are rate optimal for the corresponding parameter, when the other parameter is known.

\section{The proofs preview}\label{sec:prev}

 Let $B=(B_t, t\in \Real_+)$ and $B^H = (B^H_t, t\in \Real_+)$ be independent standard and fractional Brownian motions 
on a probability space $(\Omega, \F,\P)$. The mixed fBm \eqref{mfBm} with $\theta=(H,\sigma^2)\in (\frac 3 4, 1)\times \Real_+$ 
satisfies the canonical innovation representation \cite{Hi68} 
\begin{equation}\label{inn}
X_t = \int_0^t \rho_t(X,\theta)dt + \sqrt \eps\, \overline B_t, \quad t\in [0,T],
\end{equation}
where $\overline B$ is a Brownian motion with respect to  $\F^X_t =\sigma\{X_s, s\le t\}$, 
\begin{equation}\label{rhot}
\rho_t(X,\theta) = \int_0^t g(t,t-s;\theta)dX_s,
\end{equation}
and the function $g(t,s;\theta)$ solves the integral equation 
\begin{equation}\label{eq}
\eps g(t,s;\theta) + \int_0^t  K_{\theta}(r-s) g(t,r;\theta) dr =   K_{\theta}(s), \quad 0<s<t,
\end{equation}
with the kernel $K_\theta(\cdot)$ defined in \eqref{KH}. This equation has the unique solution in $L^2([0,t])$  
since its kernel is Hilbert-Schmidt for $H>\frac 3 4$. The stochastic integral in 
\eqref{rhot} can therefore be defined in the usual way (see \cite{PT01}).  

Let $\P^T$ and $\P^T_\theta$ be the probability measures on $C([0,T],\Real)$ induced by the Brownian motion $\sqrt \eps\, \overline B$ and the 
mixed fBm with parameter $\theta$, respectively. By the Girsanov theorem, applied to the innovation representation \eqref{inn},
$\P^T\sim \P^T_\theta$ with the Radon-Nikodym derivative   
\begin{equation}\label{RN}
\frac{d\P^T_\theta}{d\P^T} (X^T) = \exp\left(\frac  1\eps \int_0^T \rho_t(X,\theta)dX_t
-\frac 12 \frac 1 \eps \int_0^T \rho_t(X,\theta)^2dt\right).
\end{equation}
Thus asymptotics of the likelihood ratios in \eqref{LR} is determined by the limiting behavior of the solution to \eqref{eq}.

In the large time asymptotic regime with a fixed $\eps>0$ and $T\to\infty$, we show in Lemma \ref{lem:LT} that the Laplace transform 
$$
\widehat g_t(z) = \int_\Real g(t,s;\theta) e^{-zs}ds, \quad z\in \mathbb C, 
$$
where $g(t,s;\theta)$ is extended by zero outside $(0,t)$, satisfies the decomposition
$$
\widehat g_t(z)   = 1- \frac{ 1}{ X_c(-z)} + \widehat R_t(z).
$$
Here $X_c(z)$ is defined by the Cauchy type integral \eqref{Xc} of a certain explicit function.
It can be shown that the inverse Fourier transform of $\widehat g(i\lambda):= 1- 1/X(-i\lambda)$ solves 
the limit equation 
\begin{equation}\label{WHeq}
\eps g(s;\theta) + \int_0^\infty  K_{\theta}(r-s) g(r;\theta) dr =   K_{\theta}(s), \quad s\in \Real_+,
\end{equation}
obtained from \eqref{eq} by taking $t\to\infty$. This is the Wiener-Hopf equation of the second kind. 
Its solvability is not entirely obvious at the outset since the classical theory requires that 
$K_\theta \in L^1(\Real)$ (see, e.g., \cite{Krein58}). Having an essentially explicit solution 
is instrumental for deriving the formula for the Fisher information \eqref{Ieps}.

The residual $\widehat R_t(z)$ quantifies the proximity between solutions to equations 
on the finite and infinite intervals. The most challenging part of the proof is to estimate its growth 
as a function of $t$. This is what ultimately determines the correlation properties of the process 
$\rho_t(X,\theta)$ and its derivatives, see Lemmas \ref{lem1}-\ref{lem2}. To obtain suitable estimates 
we construct a representation of the solution to \eqref{eq} in terms of certain auxiliary integral equations
\eqref{pqeq}, which turn out to be more tractable for asymptotic analysis.
  
In the small noise asymptotic regime with $T$ being fixed and $\eps\to 0$, equation \eqref{eq} degenerates in the 
limit to an integral equation of the first kind, which does not have a classic solution. Nevertheless, 
the structure of kernel \eqref{KH}, corresponding to self-similarity of the fBm, implies certain 
scaling properties of the solution to \eqref{eq} (see Lemma \ref{lem:s}), which can be used to derive the small noise 
asymptotics from the large time limit.

\section{Proof of Theorem \ref{thm:main}}\label{sec:lt}
In view of the formula \eqref{RN} the likelihood ratio in Definition \ref{defn:LAN} takes the form 
\begin{equation}\label{like}
\begin{aligned} 
\log \frac{d\P^T_{\theta_0+ \phi(T) u}}{d\P^T_{\theta_0}}(X^T)  = &
\frac  1{\sqrt \eps}  \int_0^T  \big(\rho_t(X,\theta_0+ \phi(T) u)-\rho_t(X,\theta_0)\big)  d\overline B_t  \\
- & \frac 1 2\frac  1\eps  \int_0^T \big(\rho_t(X,\theta_0+ \phi(T) u)-\rho_t(X,\theta_0)\big)^2dt,
\end{aligned}
\end{equation}
where  $X$ is the mixed fBm with parameter $\theta_0$ and $\phi(T)$ is defined in Theorem \ref{thm:main}.
The matrix $I(\theta_0,\eps)$ is invertible, and establishing the LAN property in Theorem \ref{thm:main} amounts to 
proving that for any $u\in \Real^2$ 
\begin{equation}\label{qvlim}
\frac 1\eps \int_0^T \big(\rho_t(X,\theta_0+ u/\sqrt T)-\rho_t(X,\theta_0)\big)^2dt \xrightarrow[T\to\infty]{\P} u^\top I(\theta_0,\eps)u,
\end{equation}
since by the CLT for stochastic integrals \cite[Theorem 1.19]{Ku04}, this convergence also implies the convergence in distribution 
$$
\frac 1 {\sqrt \eps}\int_0^T  \big(\rho_t(X,\theta_0+u/\sqrt T)-\rho_t(X,\theta_0)\big)  d\overline B_t
\xrightarrow[T\to\infty]{d(\P)} u^\top Z
$$
with $Z\sim N\big(0, I(\theta_0,\eps)\big)$.

\subsection{Sufficient conditions}
Let us denote the partial derivatives with respect to the entries of the parameter vector $\theta$ by
$\partial_1 :=\partial_H$ and $\partial_2 :=\partial_{\sigma^2}$. The kernel in \eqref{KH} has 
partial derivatives of all orders for $\tau\ne 0$ and 
$$
\partial_i K_\theta(\cdot), \ \partial_i \partial_j K_\theta(\cdot) \in L^2([0,t]), \quad i,j\in\{1,2\}.
$$
This implies that the solution to equation \eqref{eq} also has partial derivatives 
$$
\partial_i g(t,\cdot;\theta),\ \partial_i \partial_j g(t,\cdot;\theta)\in L^2([0,t]),  \quad i,j\in\{1,2\},
$$
which can be interchanged with the stochastic integral in \eqref{rhot}. 
Consequently $\rho_t(X,\theta)$ has partial derivatives and
$$
\nabla \rho_t(X,\theta) = \int_0^t \nabla g(t,s;\theta)dX_s, \quad 
\nabla^2 \rho_t(X,\theta) = \int_0^t \nabla^2 g(t,s;\theta)dX_s,
$$
where $\nabla$ and $\nabla^2$ stand for the gradient and the Hessian operators with respect to $\theta$. 
Therefore,
\begin{equation}\label{rhodiff}
\begin{aligned}
&
 \rho_t(X,\theta_0+ u/\sqrt T)-\rho_t(X,\theta_0) = \\
&
 \frac 1 {\sqrt T}  \nabla \rho_t(X,\theta_0) u 
 +   \frac 1 {T}\int_0^1\int_0^\tau u^\top \nabla^2 \rho_t(X,\theta_0+s u/\sqrt T)  uds d\tau,
\end{aligned}
\end{equation}
and \eqref{qvlim} will be true if we manage to show that
\begin{equation}
\label{c1} 
\frac 1 \eps \frac 1 T \int_0^T   \nabla^\top \rho_t(X,\theta_0)\nabla \rho_t(X,\theta_0)dt
\xrightarrow[T\to\infty]{L^2(\Omega)}   I(\theta_0,\eps) 
\end{equation}
and, for all sufficiently small $\delta>0$,
\begin{equation}
\label{c2}
\frac 1 {T^2}\int_0^T \sup_{\theta:\|\theta-\theta_0\|\le \delta} \E \big\|\nabla^2 \rho_t(X,\theta)\big\|^2dt
\xrightarrow[T\to\infty]{}0.
\end{equation}
The main part of the proof consists of verifying properties of the gradient process $\nabla\rho_t(X,\theta)$, 
needed for establishing the latter two limits. The following lemmas provide with the key estimates to this end.

\begin{lem}\label{lem1}
Covariance function of the gradient process admits the decomposition 
$$
\E\, \nabla^\top \rho_s(X;\theta_0) \nabla \rho_t(X;\theta_0) = Q(t-s) + R(s,t)
$$
where the matrices in the right hand side satisfy the bounds
\begin{equation}\label{rbnd}
\begin{aligned}
& \big\|Q( t-s)\big\|\le    C \wedge |t-s|^{-1}\big|\log |t-s|\big|^3, \quad \forall s,t\in \Real_+,\\
&
\big\|R( s,t)\big\|\le   C   \Big(t^{-1/2} +    s^{- 1/2} + (st)^{-b}\Big),\quad \forall s,t\in [T_{\min},\infty),
\end{aligned}
\end{equation}
with some constants $b\in (0,\frac 1 2)$, $C>0$ and $T_{\min}>0$.  Moreover,  
\begin{equation}\label{diag}
Q(0)=\eps I(\theta_0,\eps).
\end{equation}
\end{lem}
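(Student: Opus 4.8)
The plan is to realize the gradient process $\nabla\rho_t(X,\theta_0)$ as a stochastic convolution against the kernel $\nabla g(t,s;\theta_0)$ and to compare the finite-horizon kernel with the Wiener--Hopf solution on $\Real_+$. Writing $X$ for the mixed fBm with parameter $\theta_0$ and recalling from \eqref{rhot} that $\nabla\rho_t = \int_0^t \nabla g(t,s;\theta_0)\,dX_s$, the covariance $\E\,\nabla^\top\rho_s \nabla\rho_t$ becomes a double integral of $\nabla g(s,\cdot)$ and $\nabla g(t,\cdot)$ against the covariance structure of $X$, which is $\eps\,\delta(u-v)\,du\,dv + K_{\theta_0}(u-v)\,du\,dv$. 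The first step is to substitute $\widehat g_t(z) = 1 - 1/X_c(-z) + \widehat R_t(z)$ from Lemma \ref{lem:LT} and differentiate in $\theta$: this splits $\nabla g(t,s;\theta_0)$ into a \emph{stationary} part $g_\infty(s;\theta_0)$, whose Fourier transform is $\nabla(1-1/X_c(-\i\lambda))$ and which solves the Wiener--Hopf equation \eqref{WHeq}, plus the residual $\nabla\widehat R_t$. Feeding this decomposition into the covariance integral produces the announced split: $Q(t-s)$ collects the terms built entirely from the stationary kernel $g_\infty$, and $R(s,t)$ collects every term that involves at least one factor of the residual $\widehat R$ or a boundary truncation.

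The second step is to identify $Q$ and verify its bound. Since $g_\infty(\cdot;\theta_0)$ is the limit kernel, the stationary part of the covariance is a genuine convolution, so $Q$ depends only on $t-s$; passing to the Fourier side and using \eqref{WHeq} together with the spectral identity $\eps + \widehat K_{\theta_0}(\lambda) = \eps\,|X_c(\i\lambda)|^{-2}$ (up to the appropriate normalization), one computes
\begin{equation*}
\widehat Q(\lambda) = \frac{\eps}{4\pi}\,\nabla^\top\log\bigl(\eps + \widehat K_{\theta_0}(\lambda)\bigr)\,\nabla\log\bigl(\eps + \widehat K_{\theta_0}(\lambda)\bigr),
\end{equation*}
whose integral over $\Real$ is exactly $\eps\,I(\theta_0,\eps)$ by \eqref{Ieps}, giving \eqref{diag} at once. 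The pointwise decay $\|Q(t-s)\|\le C\wedge |t-s|^{-1}\bigl|\log|t-s|\bigr|^3$ follows from the regularity of $\widehat Q$: near $\lambda=0$ the logarithmic derivative $\nabla\log(\eps+\widehat K_{\theta_0})$ has a $\log|\lambda|$-type singularity coming from $\partial_H|\lambda|^{1-2H} = -2|\lambda|^{1-2H}\log|\lambda|$, which after squaring and integrating by parts twice yields the $|t-s|^{-1}(\log)^3$ rate, while the $C$ bound is just $\|\widehat Q\|_{L^1}$.

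The third step --- and the main obstacle --- is the residual bound $\|R(s,t)\|\le C(t^{-1/2}+s^{-1/2}+(st)^{-b})$. Here one must control the Laplace/Fourier transform $\widehat R_t(z)$ and its $\theta$-derivatives uniformly in $t$, which is precisely the delicate growth estimate flagged in Section \ref{sec:prev} and handled via the auxiliary equations \eqref{pqeq}. The scheme is: (i) express $\nabla\widehat R_t$ through the resolvent of \eqref{eq} acting on the truncation defect $K_{\theta_0}(s)\oneset{s>t}$ and its derivatives; (ii) use the Hilbert--Schmidt bound on the kernel of \eqref{eq} (valid for $H>\tfrac34$) to get an $L^2([0,t])$ estimate on $\nabla g(t,\cdot) - g_\infty(\cdot)$ that decays like a negative power of $t$; (iii) insert this into the covariance integral, where cross terms between one stationary factor and one residual factor are bounded by Cauchy--Schwarz --- the stationary factor contributes an $L^2(\Real_+)$ norm and the residual factor the decaying $L^2$ norm, producing the $t^{-1/2}$ and $s^{-1/2}$ summands --- and the pure residual$\times$residual term gives the product $(st)^{-b}$. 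The exponent $b<\tfrac12$ is whatever rate the analysis of \eqref{pqeq} delivers; the restriction to $[T_{\min},\infty)$ absorbs the non-uniformity of these transform estimates for small $t$. The bookkeeping of which boundary terms land in $Q$ versus $R$, and keeping the residual estimates uniform in $\theta$ near $\theta_0$ (needed later for \eqref{c2}), is where the real work lies.
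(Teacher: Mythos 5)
Your overall route matches the paper's: pass to the spectral side via Plancherel, substitute the splitting $\widehat g_t = 1 - 1/X_c(-\cdot) + \widehat R_t$ from Lemma~\ref{lem:LT}, let $Q$ collect the purely stationary part and $R$ everything touching the residual, and control the residual via the auxiliary equations~\eqref{pqeq} (the paper works through Lemma~\ref{lem:pqbnd} rather than any direct Hilbert--Schmidt estimate, but the spirit of your step (ii)--(iii) is the same: Cauchy--Schwarz on cross terms gives $s^{-1/2},t^{-1/2}$, the pure residual term gives $(st)^{-b}$).

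However, there is a genuine gap in your treatment of~\eqref{diag}. You assert that the spectral density of the stationary part is
\[
\widehat Q(\lambda) \;=\; \frac{\eps}{4\pi}\,\nabla^\top\log\bigl(\eps+\widehat K_{\theta_0}(\lambda)\bigr)\,\nabla\log\bigl(\eps+\widehat K_{\theta_0}(\lambda)\bigr),
\]
from which $Q(0)=\eps I(\theta_0,\eps)$ would follow "at once." This is not what the Plancherel computation produces. Since $\Lambda(\i\lambda)=\eps\,X_c(\i\lambda)X_c(-\i\lambda)$, the stationary integrand in the covariance is
\[
\partial_j\frac{1}{X_c(-\i\lambda)}\,\partial_i\frac{1}{X_c(\i\lambda)}\,\Lambda(\i\lambda)
\;=\;\eps\,\partial_i\log X_c(\i\lambda)\,\partial_j\log X_c(-\i\lambda),
\]
which is \emph{not} pointwise equal to $\tfrac{\eps}{2}\,\partial_i\log\Lambda(\i\lambda)\,\partial_j\log\Lambda(\i\lambda)$. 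Expanding $\partial_i\log\Lambda=\partial_i\log X_c(\i\cdot)+\partial_i\log X_c(-\i\cdot)$, the two quantities differ by the "wrong-sign" cross term $\eps\,\partial_i\log X_c(\i\lambda)\,\partial_j\log X_c(\i\lambda)$; the identity $Q(0)=\eps I(\theta_0,\eps)$ therefore holds only because
\[
\int_{-\infty}^{\infty}\partial_i\log X_c(\i\lambda)\,\partial_j\log X_c(\i\lambda)\,d\lambda \;=\; 0,
\]
which is a non-trivial fact the paper proves by writing $\log X_c$ as a Cauchy integral of $\alpha$ and showing by contour integration that $\int_{-\infty}^\infty (\tau-\i\lambda)^{-1}(r-\i\lambda)^{-1}\,d\lambda=0$ for $\tau,r\in\Real_+$. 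Your plan skips this entirely, so as written the identity~\eqref{diag} is unsupported. (The decay estimate $\|Q(t-s)\|\le C\wedge|t-s|^{-1}|\log|t-s||^3$ is not affected by this, since $\partial_i\log X_c$ and $\partial_i\log\Lambda$ have the same singularity scale at $0$ and $\infty$; but the derivation should bound the actual integrand and its $\lambda$-derivative rather than the wrong spectral density.)
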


\begin{lem}\label{lem2}
For all sufficiently small $\delta>0$, there exist constants $C>0$ and $T_{\min}>0$  
so that
$$
\sup_{\theta: \|\theta -\theta_0\|\le \delta}\E \big\|\nabla^2 \rho_t(X,\theta)\big\|^2\le C, \quad \forall t\ge  T_{\min}.
$$

\end{lem}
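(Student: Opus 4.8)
The plan is to reduce the assertion to deterministic spectral bounds on the second $\theta$-derivatives of the resolvent function $g(t,\cdot;\theta)$ from \eqref{eq}, and to derive these from the large-time analysis of \eqref{eq} already developed for Lemma \ref{lem1}. Since $X$ is the mixed fBm with parameter $\theta_0$, whose fractional-noise spectral density is $\eps+\widehat K_{\theta_0}(\lambda)$, any deterministic $\varphi\in L^2([0,t])$ satisfies the isometry
\[
\E\Big(\int_0^t\varphi(s)\,dX_s\Big)^2=\frac1{2\pi}\int_\Real|\widehat\varphi(\lambda)|^2\big(\eps+\widehat K_{\theta_0}(\lambda)\big)\,d\lambda,\qquad \widehat\varphi(\lambda):=\int_\Real\varphi(s)e^{-\i\lambda s}\,ds.
\]
By Section \ref{sec:lt} we have $\nabla^2\rho_t(X,\theta)=\int_0^t\nabla^2 g(t,t-s;\theta)\,dX_s$ with $\nabla^2 g(t,\cdot;\theta)\in L^2([0,t])$, so applying the isometry entrywise (and noting that the $L^2$-norm and the modulus of the Fourier transform are unchanged under the reflection $s\mapsto t-s$) gives, with $\widehat g_t(z;\theta):=\int_0^t g(t,s;\theta)e^{-zs}\,ds$,
\[
\E\big\|\nabla^2\rho_t(X,\theta)\big\|^2=\frac1{2\pi}\int_\Real\big\|\nabla^2\widehat g_t(\i\lambda;\theta)\big\|^2\big(\eps+\widehat K_{\theta_0}(\lambda)\big)\,d\lambda.
\]
As $\eps+\widehat K_{\theta_0}(\lambda)=\eps+\sigma_0^2a_{H_0}|\lambda|^{1-2H_0}$ is bounded on $\{|\lambda|\ge1\}$ and integrable on $\{|\lambda|<1\}$, splitting the last integral at $|\lambda|=1$ and using Plancherel reduces Lemma \ref{lem2} to showing that, for $\delta>0$ small enough that $\{\|\theta-\theta_0\|\le\delta\}\subset\Theta$,
\[
\sup_{\|\theta-\theta_0\|\le\delta}\;\sup_{t\ge T_{\min}}\Big(\big\|\nabla^2 g(t,\cdot;\theta)\big\|_{L^2([0,t])}+\sup_{|\lambda|\le1}\big\|\nabla^2\widehat g_t(\i\lambda;\theta)\big\|\Big)<\infty.
\]

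Both quantities will be estimated through the decomposition $\widehat g_t(z;\theta)=\widehat g_\infty(z;\theta)+\widehat R_t(z;\theta)$ of Lemma \ref{lem:LT}, in which $\widehat g_\infty(\cdot;\theta)$ is the Laplace transform of the solution $g(\cdot;\theta)$ of the Wiener--Hopf equation \eqref{WHeq}, expressed explicitly through the Cauchy-type integral $X_c(\cdot;\theta)$. First I would check that $g(\cdot;\theta)$ depends smoothly on $\theta$ and differentiate \eqref{eq} and \eqref{WHeq} twice in $\theta$ — legitimate since the kernel \eqref{KH} together with its $\theta$-derivatives of orders one and two is continuous in $\theta$ and lies in $L^2([0,t])$ locally uniformly in $\theta$ — so that $\nabla^2 g(t,\cdot;\theta)$ and $\nabla^2 g(\cdot;\theta)$ solve the same respective equations with right-hand sides assembled from $K_\theta,\nabla K_\theta,\nabla^2 K_\theta$ and from $g,\nabla g$. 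For the limit term, the explicit form of $X_c(\cdot;\theta)$ yields $\nabla^2\widehat g_\infty(\i\lambda;\theta)=O\big(|\lambda|^{(2H-1)/2}(\log|\lambda|)^2\big)$ as $\lambda\to0$ and $\nabla^2\widehat g_\infty(\i\lambda;\theta)=O\big(|\lambda|^{1-2H}(\log|\lambda|)^2\big)$ as $|\lambda|\to\infty$; since $H>\tfrac34$, this gives $\nabla^2 g(\cdot;\theta)\in L^2(\Real_+)$ and $\sup_{|\lambda|\le1}\|\nabla^2\widehat g_\infty(\i\lambda;\theta)\|<\infty$, with bounds continuous in $\theta$ and hence uniform on $\{\|\theta-\theta_0\|\le\delta\}$. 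It then remains to show that the residual satisfies $\|\nabla^2 R_t(\cdot;\theta)\|_{L^2(\Real_+)}\to0$ and $\sup_{|\lambda|\le1}\|\nabla^2\widehat R_t(\i\lambda;\theta)\|\to0$ as $t\to\infty$, uniformly for $\|\theta-\theta_0\|\le\delta$; this I would obtain exactly as in the proof of Lemma \ref{lem1}, representing $g(t,\cdot;\theta)$ through the auxiliary equations \eqref{pqeq} and repeating the same estimates for the second $\theta$-derivatives, with all constants tracked so as to be locally uniform in $\theta$.

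The hard part is this last step: bounding the growth in $t$ of $\nabla^2\widehat R_t$ uniformly in $\theta$, which is precisely the difficulty highlighted in Section \ref{sec:prev} — the finite-interval equation \eqref{eq} must be compared with the Wiener--Hopf equation \eqref{WHeq} whose kernel \eqref{KH} is not integrable on $\Real$ — and is the reason the auxiliary equations \eqref{pqeq} are introduced. Two features make this tractable here. Only a crude $O(1)$ bound is needed rather than a limit, so the decay rates recorded in \eqref{rbnd} are far more than enough. And the passage from pointwise-in-$\theta$ estimates, as used for Lemma \ref{lem1}, to estimates uniform over a neighborhood of $\theta_0$ is routine, once one observes that the symbol $\eps+\widehat K_\theta(\lambda)$, the factor $X_c(\cdot;\theta)$ and all the kernels of \eqref{pqeq} depend smoothly on $\theta$ with locally uniform bounds. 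Finally, Lemma \ref{lem2} yields \eqref{c2}: its integrand is at most a constant for $t\ge T_{\min}$ and has finite integral over $[0,T_{\min}]$, so the factor $T^{-2}$ drives the whole expression to zero.
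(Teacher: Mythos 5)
Your proposal is correct and takes essentially the same route as the paper: reduce via the spectral isometry to a weighted $L^2(\Real)$ bound on $\nabla^2\widehat g_t(\i\lambda;\theta)$, decompose through Lemma \ref{lem:LT} into the limit term $1-1/X_c(-z;\theta)$ and the residual $\widehat R_t$, bound the former by the logarithmic power estimates on $\nabla^2\log X_c$ (integrability requiring precisely $H>\tfrac34$), and bound the latter via the auxiliary equations \eqref{pqeq} and the $\theta$-derivative estimates of Lemma \ref{lem:pqbnd}. The only cosmetic difference is that you split the spectral integral at $|\lambda|=1$ and trade the weight $\Lambda(\i\lambda;\theta_0)$ for a Plancherel $L^2$-norm plus a $\sup$ near the origin, whereas the paper carries $\Lambda(\i\lambda;\theta_0)$ inside the integrand throughout; the resulting estimates are the same.
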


\medskip

Lemma \ref{lem1} implies that  
$
\E \nabla^\top \rho_t(X,\theta_0)\nabla \rho_t(X,\theta_0)  \xrightarrow[t\to\infty]{} \eps I(\theta_0,\eps) 
$
and 
$$
\frac 1 {T^2} \int_{0}^T \int_{0}^T  
\Big\|\E \nabla^\top \rho_s(X,\theta_0)\nabla \rho_t(X,\theta_0)\Big\|^2  dsdt\xrightarrow[T\to\infty]{} 0.
$$
In turn, these two limits ensure \eqref{c1}, since $\nabla \rho_t(X,\theta_0)$ is a centered Gaussian process.
Lemma \ref{lem2} implies the convergence in \eqref{c2}. 

\subsection{Equation \eqref{eq}}\label{sec:eq}
The probabilistic structure of the process $\rho_t(X,\theta)$ in \eqref{rhot} is determined by the solution to equation \eqref{eq}.
Thus the first step towards the proof of Lemmas \ref{lem1} and \ref{lem2} is to derive a useful representation for it.  
In this subsection we show that it can be decomposed into the sum of the principle term
independent of $t$, and the residual term which vanishes with $t\to\infty$. 
The analysis is based on the Laplace transform  
\begin{equation}\label{Lap}
\widehat g_t(z) = \int_{-\infty}^\infty g(t,s) e^{-zs}ds=\int_0^t g(t,s) e^{-zs}ds, \quad z\in \mathbb C,
\end{equation} 
where the domain of $g(t,\cdot)$ is extended to $\Real$ by zero outside the interval $(0,t)$.  
Since integration in \eqref{Lap} is carried out over a bounded interval, $\widehat g_t(z)$ is an entire function and 
hence $g(t,\cdot)$ can be recovered by the inverse Fourier transform 
$$
g(t,s) = \frac 1 {2\pi  }\int_{- \infty}^{ \infty} \widehat g_t(\i\lambda)e^{is\lambda}d\lambda.
$$

\subsubsection{The Laplace transform}
The decomposition mentioned above is based on the following representation formula for the Laplace transform. 

\begin{lem}
The Laplace transform \eqref{Lap} satisfies 
\begin{equation}\label{rep}
\widehat g_t(z) -1 = \frac{\Phi_0(z) +e^{-tz}\Phi_1(-z)}{\Lambda(z)}
\end{equation}
where 
\begin{equation}\label{Lambda}
\Lambda(z) = \eps+\frac {\sigma^2} 2  \Gamma(2H+1) \big(z^{1-2H}+(-z)^{1-2H}\big)
\end{equation}
and the functions $\Phi_0(z)$ and $\Phi_1(z)$ are sectionally 
holomorphic on $\mathbb C\setminus \Real_+$,  
\begin{equation}
\label{toinf}
\Phi_0(z) = -\eps + O(z^{1-2H})\quad \text{and}\quad \Phi_1(z) = O(z^{1-2H}),\quad z\to \infty, 
\end{equation}
and 
\begin{equation}
\label{tozero}
\Phi_0(z) =   O(z^{1-2H})\quad \text{and}\quad \Phi_1(z) = O(z^{1-2H}),\quad z\to 0.
\end{equation}
\end{lem}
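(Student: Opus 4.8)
The plan is to derive \eqref{rep} by turning the finite-interval equation \eqref{eq} into a boundary-value (Riemann–Hilbert type) problem for the Laplace transform. First I would fix $t$, extend $g(t,\cdot)$ by zero outside $(0,t)$, and apply the two-sided Laplace/Fourier transform to \eqref{eq}. The convolution term $\int_0^t K_\theta(r-s)g(t,r)\,dr$ is a convolution restricted to $s\in(0,t)$, so its transform splits: after multiplying \eqref{eq} by $e^{-zs}$ and integrating over $s\in(0,t)$ one gets $\widehat K_\theta(z)\widehat g_t(z)$ plus two "defect" terms supported on the complementary half-lines $s<0$ and $s>t$. Writing $K_\theta$'s bilateral Laplace transform via \eqref{KK}, i.e. the analytic continuation $\widehat K_\theta(z)=\tfrac{\sigma^2}{2}\Gamma(2H+1)\bigl(z^{1-2H}+(-z)^{1-2H}\bigr)$ (valid with the branch cut along $\Real_+$, which is why $\Phi_0,\Phi_1$ are sectionally holomorphic on $\mathbb C\setminus\Real_+$), the coefficient of $\widehat g_t(z)$ becomes exactly $\Lambda(z)=\eps+\widehat K_\theta(z)$ from \eqref{Lambda}. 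The right-hand side $K_\theta(s)$ on $(0,t)$ transforms to $\widehat K_\theta(z)$ minus its own defect terms on $s<0$ and $s>t$. Collecting everything, the defect terms from $s>t$ carry the factor $e^{-tz}$ (after the substitution $s\mapsto t+s$), which produces the $e^{-tz}\Phi_1(-z)$ piece, while the $s<0$ defects and the constant $-\eps$ (coming from $\Lambda(z)\cdot 1 = \eps + \widehat K_\theta(z)$ and cancelling $\widehat K_\theta$) assemble into $\Phi_0(z)$. Rearranging gives $\Lambda(z)\bigl(\widehat g_t(z)-1\bigr)=\Phi_0(z)+e^{-tz}\Phi_1(-z)$, which is \eqref{rep}.

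For the analyticity and growth claims \eqref{toinf}–\eqref{tozero}, the functions $\Phi_0,\Phi_1$ are (up to entire corrections absorbed in the constant) Cauchy-type integrals of $g(t,\cdot)$ against the kernel's boundary values, so they are holomorphic off $\Real_+$ by construction. The asymptotics are read off from the behaviour of these Cauchy integrals. As $z\to\infty$, the leading term of $\Lambda(z)\bigl(\widehat g_t(z)-1\bigr)$ must reproduce $-\eps$ (since $\widehat g_t(z)\to 0$ and $\Lambda(z)\sim \widehat K_\theta(z)$ which times $\widehat g_t(z)$ tends to the value forced by $K_\theta(0^+)$-type matching, while the $-\Lambda(z)\cdot 1$ contributes $-\eps-\widehat K_\theta(z)$); carefully tracking the $z^{1-2H}$ tails of the defect integrals gives $\Phi_0(z)=-\eps+O(z^{1-2H})$ and $\Phi_1(z)=O(z^{1-2H})$. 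As $z\to 0$, both $\Phi_0$ and $\Phi_1$ are $O(z^{1-2H})$ because the $s<0$ and $s>t$ defect integrals of the $L^1_{loc}$ function $g(t,\cdot)$ against $e^{-zs}$ and the $|\tau|^{2H-2}$ kernel vanish at the rate dictated by the kernel's transform $\sim |z|^{1-2H}$.

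The main obstacle I expect is \textbf{making the bilateral transform rigorous and cleanly separating the defect terms}, since $K_\theta(\tau)=\sigma^2 H(2H-1)|\tau|^{2H-2}$ is not in $L^1(\Real)$ and $\widehat K_\theta(\lambda)=\sigma^2 a_H|\lambda|^{1-2H}$ grows at infinity — so the transform of the convolution has to be justified as a principal-value / distributional identity, with the branch of $z^{1-2H}$ on $\mathbb C\setminus\Real_+$ chosen consistently so that $z^{1-2H}+(-z)^{1-2H}$ reduces to $2|\lambda|^{1-2H}\cos(\pi(H-\tfrac12))\cdot(\text{const})$ on the imaginary axis and matches \eqref{KK}. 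Establishing the precise order of the error terms in \eqref{toinf}–\eqref{tozero} requires controlling the tails and the near-origin behaviour of the Cauchy integrals uniformly enough that nothing worse than $O(z^{1-2H})$ survives; this is where the integrability $K_\theta\in L^2([0,t])$ (Hilbert–Schmidt, $H>\tfrac34$) and the resulting $L^2$-regularity of $g(t,\cdot)$ and its derivatives, already recorded in the excerpt, are used.
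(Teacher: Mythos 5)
Your route is genuinely different from the paper's, and it contains a gap that you correctly flag but do not close. You propose to apply the bilateral Laplace transform directly to the truncated convolution $\int_0^t K_\theta(r-s)\,g(t,r)\,dr$ and to separate it into a full-line convolution term $\widehat K_\theta(z)\,\widehat g_t(z)$ plus ``defect'' pieces on $s<0$ and $s>t$. The problem is that, for $H\in(\tfrac34,1)$, $K_\theta\notin L^1(\Real)$ \emph{and} $K_\theta\notin L^2(\Real)$ (the tail $|\tau|^{2H-2}$ is not square integrable at infinity), so the full-line term does not exist in any classical transform sense for general complex $z$, and the two defect integrals individually diverge (exponential growth of $e^{-zs}$ against a non-integrable kernel). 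The split you write is formally additive, but its pieces are not separately well defined; turning it into a ``principal-value / distributional identity'' precise enough to recover the power-law orders in \eqref{toinf}--\eqref{tozero} is exactly what is missing from the plan.

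The paper avoids this entirely by a different device: the Euler-gamma mixture representation
$$
K_\theta(u)=\int_0^\infty \kappa(\tau)\,e^{-|u|\tau}\,d\tau,\qquad \kappa(\tau)=\sigma^2\frac{H(2H-1)}{\Gamma(2-2H)}\tau^{1-2H}.
$$
Substituting this into \eqref{eq} and applying the Laplace transform, each fixed-$\tau$ exponential $e^{-|s-r|\tau}$ is tame (integrable in $s$ on bounded intervals, with a closed-form Laplace transform), so Fubini applies and the transform of the convolution is computed exactly. This yields $\widehat g_t(z)\bigl(\mu(z)+\mu(-z)\bigr)$ with $\mu(z)=\int_0^\infty \kappa(x)/(x-z)\,dx=\tfrac{\sigma^2}{2}\Gamma(2H+1)(-z)^{1-2H}$, giving $\Lambda(z)=\eps+\mu(z)+\mu(-z)$ as in \eqref{Lambda}, together with two remainder integrals of the explicit Cauchy-type form $\int_0^\infty \kappa(\tau)\,\widehat g_t(\tau)/(\tau-z)\,d\tau$ and $\int_0^\infty \kappa(\tau)\,\widecheck g_t(\tau)/(\tau+z)\,d\tau$ (plus an analogous pair from the right-hand side). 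These are absolutely convergent, sectionally holomorphic on $\mathbb C\setminus\Real_+$ by construction, and the asymptotics \eqref{toinf}--\eqref{tozero} follow from elementary estimates of Cauchy integrals using the boundedness of $\widehat g_t(\tau)$ and $\widecheck g_t(\tau)$. In short: your argument identifies the right structure (denominator $\Lambda(z)$, exponential factor $e^{-tz}$ from the far boundary, the $\Real_+$-cut), but the bilateral-transform shortcut does not go through without the $\kappa$-representation (or an equivalent regularization), which is precisely the step the paper supplies.
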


\begin{proof}
By the definition of Euler's gamma function, the kernel in \eqref{KH} satisfies the integral formula 
\begin{equation}\label{Kk}
K_{\theta}(u) = \int_0^\infty \kappa(\tau) e^{-|u|\tau}d\tau, \quad u\in \Real,
\end{equation}
where 
$$
\kappa(\tau) =\sigma^2  \frac{H (2H -1)}{\Gamma(2-2H )}\tau^{1-2H}, \quad \tau\in \Real_+.
$$
Replacing the kernel in equation \eqref{eq} with expression \eqref{Kk} gives
\begin{equation}\label{geq}
\eps g(t,s) + \int_0^t g(t,r) \int_0^\infty \kappa(\tau) e^{-|s-r|\tau}d\tau  dr =   \int_0^\infty \kappa(\tau) e^{-s\tau}d\tau.
\end{equation}
The Laplace transform of the integral in the left hand side is 
\begin{align*}
&
\int_0^t \left(\int_0^t g(t,r) \int_0^\infty \kappa(\tau) e^{-|s-r|\tau}d\tau  dr\right) e^{-sz}ds =\\
&
\int_0^t g(t,r)\int_0^\infty \kappa(\tau)  \left(\int_0^t  e^{-|s-r|\tau}   e^{-sz}ds\right) d\tau dr =\\
&
\int_0^t g(t,r)\int_0^\infty \kappa(\tau)  \left(\frac{e^{-rz}-e^{-r\tau}}{\tau-z}+\frac {e^{-rz}-e^{-tz-(t-r)\tau}}{\tau+z}\right) d\tau dr =\\
&
\widehat g_t(z) \big(\mu(z) +    \mu(-z) \big)
-
\int_0^\infty  \frac{\kappa(\tau)  }{\tau-z} \widehat g_t( \tau)  d\tau  
-
e^{-tz}\int_0^\infty     \frac { \kappa(\tau)}{\tau+z} \widecheck g_t(\tau)  d\tau, 
\end{align*}
where   
$
\widecheck g_t(z):= \int_0^t g(t,t-r)e^{-z r} dr 
$
is the Laplace transform of time reversed solution and  
$$
\mu(z) =   \int_0^\infty  \frac {\kappa(x)} {x-z}dx = \frac {\sigma^2}2 \Gamma(2H+1)(-z)^{1-2H}.
$$
Similarly,  
$$
\int_0^t \left(\int_0^\infty \kappa(\tau) e^{-s\tau}d\tau\right)e^{-sz}ds =
\mu(-z)-e^{-tz}\int_0^\infty \kappa(\tau) \frac {e^{- t\tau } } {\tau+z}  d\tau.
$$ 
Thus applying the Laplace transform to \eqref{geq} we obtain  \eqref{rep} with
\begin{align*}
\Phi_0(z):= & -\eps -\mu(z)+ \int_0^\infty  \frac{\kappa(\tau)  }{\tau-z} \widehat g_t( \tau)  d\tau,
\\
\Phi_1(z) := & 
  - \int_0^\infty  \frac {\kappa(\tau)  } {\tau-z} e^{- t\tau } d\tau +\int_0^\infty     \frac { \kappa(\tau)}{\tau-z} \widecheck g_t(\tau)  d\tau,
\end{align*}
and 
$
\Lambda(z) = \eps + \mu(z)+\mu(-z).
$
The functions $\widehat g_t(\tau)$ and $\widecheck g_t(\tau)$ are bounded over $\tau\in \Real_+$ and 
the estimates \eqref{toinf}-\eqref{tozero} are derived from these formulas by standard calculations.
\end{proof}

The next lemma gathers some useful properties of $\Lambda(z)$.

\begin{lem}\label{lem:prop}
The function $\Lambda(z)$ defined in \eqref{Lambda} is non-vanishing and sectionally holomorphic on $\mathbb C\setminus \Real$
with the limits 
$$
\Lambda^\pm (\tau) = \eps +   \sigma^2a_H |\tau|^{1-2H} \begin{cases}
e^{\pm  ( H-\frac 1 2) \pi \i }, & \tau\in \Real_+, \\
e^{\mp  ( H-\frac 1 2) \pi \i }, & \tau\in \Real_-,
\end{cases}
$$
where
$
a_H = \Gamma(2H+1) \sin (\pi H).
$
These functions satisfy the symmetries 
\begin{align}
\label{sym1}
\Lambda^+(\tau) & = \overline{\Lambda^-(\tau)}, \\
\label{sym2}
\frac{\Lambda^+(\tau)}{\Lambda^-(\tau)} & =\frac{\Lambda^-(-\tau)}{\Lambda^+(-\tau)},
\end{align}
and the principal branch of the argument  $\alpha(\tau) := \arg\big\{\Lambda^+(\tau)\big\}$,
\begin{equation}\label{alpha}
\alpha(\tau) =  \arctan \frac{\sigma^2 a_H \sin\big((H-\frac 1 2)\pi  \big)}
{\eps|\tau|^{2H-1}+\sigma^2 a_H  \cos\big( (H-\frac 1 2)\pi \big)}\sign(\tau),
\end{equation}
is an odd decreasing function, continuous on $\Real\setminus \{0\}$, satisfying 
\begin{equation}\label{alas}
\alpha(0+) =   \pi  ( H-\tfrac 1 2)\quad \text{and}\quad 
\alpha(\tau)= O(\tau^{1-2H})\quad \text{as\ }\tau\to\infty.
\end{equation}
\end{lem}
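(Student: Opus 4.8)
The plan is to obtain everything from the explicit boundary values of $\mu(z)=\frac{\sigma^2}2\Gamma(2H+1)(-z)^{1-2H}$, using the principal branch of $(-z)^{1-2H}$ on $\mathbb C\setminus\Real_+$ and of $z^{1-2H}$ on $\mathbb C\setminus\Real_-$. First I would record that $\Lambda(z)=\eps+\mu(z)+\mu(-z)$ is holomorphic on $\mathbb C\setminus\Real$, being a sum of functions holomorphic off the two half-lines; its nontangential boundary values from the upper and lower half-planes exist because each power function has such limits. For $\tau\in\Real_+$ the term $\mu(z)=\frac{\sigma^2}2\Gamma(2H+1)(-z)^{1-2H}$ approached from $\mathrm{Im}\,z>0$ gives $(-\tau-\i 0)^{1-2H}=|\tau|^{1-2H}e^{-\i\pi(1-2H)}$ up to the sign conventions, while $\mu(-z)$ is continuous across $\Real_+$; combining the two and using $\Gamma(2H+1)\cdot(\text{the resulting trigonometric factor})=\sigma^2 a_H e^{\pm(H-\frac12)\pi\i}$ with $a_H=\Gamma(2H+1)\sin(\pi H)$ yields the stated formula for $\Lambda^\pm(\tau)$ on $\Real_+$, and the computation on $\Real_-$ is symmetric by swapping the roles of $\mu(z)$ and $\mu(-z)$. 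This is a routine but slightly delicate bookkeeping of branch cuts — I regard it as the only real point of care in the lemma.

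Next, the symmetries. Relation \eqref{sym1}, $\Lambda^+(\tau)=\overline{\Lambda^-(\tau)}$, is immediate from the closed form: $\eps$ and $\sigma^2 a_H|\tau|^{1-2H}$ are real, and the two exponentials $e^{\pm(H-\frac12)\pi\i}$ are complex conjugates. Relation \eqref{sym2} follows by writing $\Lambda^\pm(\tau)=\eps+\sigma^2 a_H|\tau|^{1-2H}e^{\pm\i\beta}$ with $\beta:=(H-\frac12)\pi$ for $\tau>0$ and $\Lambda^\pm(-\tau)=\eps+\sigma^2 a_H|\tau|^{1-2H}e^{\mp\i\beta}$; then both $\Lambda^+(\tau)/\Lambda^-(\tau)$ and $\Lambda^-(-\tau)/\Lambda^+(-\tau)$ equal $(\eps+\rho e^{\i\beta})/(\eps+\rho e^{-\i\beta})$ with $\rho:=\sigma^2 a_H|\tau|^{1-2H}$, which proves the identity.

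For the argument function, I would compute $\alpha(\tau)=\arg\Lambda^+(\tau)$ directly: writing $\Lambda^+(\tau)=\big(\eps|\tau|^{2H-1}+\sigma^2 a_H\cos\beta\big)+\i\,\sigma^2 a_H\sin\beta$ after multiplying through by $|\tau|^{2H-1}>0$ (which does not change the argument) for $\tau>0$, and with the sign of the imaginary part flipped for $\tau<0$, gives exactly \eqref{alpha}; since $H>\frac12$ the real part $\eps|\tau|^{2H-1}+\sigma^2 a_H\cos\beta$ is positive (here $\beta=(H-\frac12)\pi\in(\frac\pi4,\frac\pi2)$ so $\cos\beta>0$), so the principal $\arctan$ is legitimate and $\alpha$ is continuous on $\Real\setminus\{0\}$ and odd. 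Monotonicity: for $\tau>0$ the numerator of the $\arctan$ argument is constant while the denominator is strictly increasing in $|\tau|$, so the $\arctan$ argument strictly decreases, hence $\alpha$ strictly decreases on $\Real_+$; oddness extends this to all of $\Real\setminus\{0\}$. Finally the asymptotics \eqref{alas}: as $\tau\to0+$ the denominator tends to $\sigma^2 a_H\cos\beta$, so the $\arctan$ argument tends to $\tan\beta$ and $\alpha(0+)=\beta=\pi(H-\frac12)$; as $\tau\to\infty$ the denominator grows like $\eps|\tau|^{2H-1}$, so the $\arctan$ argument is $O(\tau^{1-2H})$ and hence $\alpha(\tau)=O(\tau^{1-2H})$ because $\arctan x=O(x)$ near $0$. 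All steps after the boundary-value computation are elementary one-variable calculus.
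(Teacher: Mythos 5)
Your proof is correct and carries out exactly the ``direct calculations'' that the paper's one-line proof alludes to: boundary values of the two power functions, the symmetries by inspection of the closed form, and the properties of $\alpha$ by one-variable calculus after clearing $|\tau|^{2H-1}$.

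The only item you did not address is the claim that $\Lambda$ is \emph{non-vanishing} on $\mathbb C\setminus\Real$ (your computation only shows the boundary values $\Lambda^\pm(\tau)$ have real part $\ge\eps$). This also follows by a direct calculation: for $z$ with $\arg z=\varphi\in(0,\pi)$ one has $\arg(-z)=\varphi-\pi$, so
\[
z^{1-2H}+(-z)^{1-2H}=2|z|^{1-2H}\cos\!\big(\tfrac{(2H-1)\pi}{2}\big)\,e^{\,\i(1-2H)(\varphi-\pi/2)},
\]
and since $\cos((2H-1)\pi/2)=\sin(\pi H)>0$ while $(1-2H)(\varphi-\pi/2)\in(-\pi/2,\pi/2)$ for $H\in(\tfrac12,1)$, the real part of this expression is strictly positive; hence $\Re\Lambda(z)>\eps>0$ in the upper half-plane, and the lower half-plane follows by conjugation. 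With this one addition the argument is complete. Everything else (the branch-cut bookkeeping producing $\Lambda^\pm$, the conjugation symmetry \eqref{sym1}, the ratio identity \eqref{sym2}, the positivity of the real part of $\Lambda^+$ justifying the $\arctan$ formula, monotonicity from the increasing denominator, and the $0^+$ and $\infty$ asymptotics) is correct as you wrote it.
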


\begin{proof}
All the claims are derived by direct calculations using \eqref{Lambda}.
\end{proof}

\subsubsection{An equivalent representation}
Next we will derive an alternative expression for the Laplace transform \eqref{Lap}.
The key observation to this end is that $\widehat g_t(z)$ 
is an entire function and hence discontinuity in the denominator in the right hand side 
of \eqref{rep} must be removable:
$$
\lim_{\Im(z)>0, z\to \tau}\frac{\Phi_0(z)  + e^{-tz}\Phi_1(-z)}{\Lambda(z) }
=
\lim_{\Im(z)<0, z\to \tau}\frac{\Phi_0(z)  + e^{-tz}\Phi_1(-z)}{\Lambda(z) }, \ \forall \tau\in \Real.
$$
By rearranging and using  symmetries \eqref{sym2}, this condition reduces to 
\begin{equation}\label{bndPhi}
\begin{aligned}
& 
\Phi_0^+(\tau) - \frac{\Lambda^+(\tau)}{\Lambda^-(\tau)} \Phi_0^-(\tau)  = 
e^{-t \tau } \Phi_1(-\tau) \Big(\frac{\Lambda^+(\tau)}{\Lambda^-(\tau)}-1\Big),\\
&
\Phi_1^+(\tau) - \frac{\Lambda^+(\tau)}{\Lambda^-(\tau)} \Phi_1^-(\tau)  = 
e^{-t \tau } \Phi_0(-\tau) \Big(\frac{\Lambda^+(\tau)}{\Lambda^-(\tau)}-1\Big),
\end{aligned}
\quad \forall \tau\in \Real_+,
\end{equation}
where, due to \eqref{sym1}, 
$$
\frac{\Lambda^+(t)}{\Lambda^-(t)} = \exp(2\i\alpha(t)).
$$

The functions $\Phi_0(z)$ and $\Phi_1(z)$ are sectionally holomorphic on $\mathbb C\setminus \Real_+$, 
satisfy the boundary conditions \eqref{bndPhi} and the growth estimates \eqref{toinf}. 
Using the usual technique for solving the Hilbert boundary value problems,
such functions can be expressed in terms of solutions to certain auxiliary integral equations.

The first step towards the construction of this representation consists of finding a function $X(z)$, sectionally holomorphic on 
$\mathbb C\setminus \Real_+$ and satisfying the {\em homogeneous} boundary condition, cf. \eqref{bndPhi},
$$
X^+(\tau) - \frac{\Lambda^+(\tau)}{\Lambda^-(\tau)} X^-(\tau) =0, \quad \forall \tau \in \Real_+.
$$
This is a standard instance of the Hilbert boundary value problem \cite{Gahov}. 
Since the function 
$$
\log \frac{\Lambda^+(\tau)}{\Lambda^-(\tau)} = 2\i\alpha(\tau)
$$
satisfies the H\"older condition on $\Real_+\cup \{\infty\}$, all solutions to this problem, which do not vanish on 
$\mathbb C \setminus \{0\}$, have the form $X(z) = z^k X_c(z)$ for some integer $k\in \mathbb Z$, 
where the canonical part is found by the Sokhotski--Plemelj formula
\begin{equation}\label{Xc}
\begin{aligned}
X_c(z) = &
\exp \Big(\frac 1 {2\pi \i} \int_0^\infty \frac{\log \Lambda^+(\tau)/\Lambda^-(\tau)}{\tau-z}d\tau\Big)
= \\
&
\exp \Big(\frac 1 \pi \int_0^\infty \frac{\alpha(\tau)}{\tau-z}d\tau\Big), \quad z\in \mathbb C\setminus \Real_+.
\end{aligned}
\end{equation}
The following lemma summarizes some of its useful properties. 

\begin{lem}
The function defined in \eqref{Xc} satisfies the asymptotics  
\begin{equation}\label{Xest}
X_c(z)   = \begin{cases}
O(z^{\frac 1 2-H}), & \quad z\to 0, \\
1, & \quad z\to \infty,
\end{cases}
\end{equation}
and is related to $\Lambda(z)$, defined in \eqref{Lambda}, through the identity 
\begin{equation}\label{XXL}
 X_c(z)X_c(-z) =\frac 1 {\eps} \Lambda(z), \quad z\in \mathbb C \setminus \Real.
\end{equation}

\end{lem}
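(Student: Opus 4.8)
The plan is to prove the two asymptotic relations in \eqref{Xest} first, by direct estimation of the Cauchy-type integral \eqref{Xc}, and then to deduce the identity \eqref{XXL} by a Liouville-type argument comparing the analytic continuations across $\Real$ of $\eps\,X_c(z)X_c(-z)$ and $\Lambda(z)$.

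For the asymptotics, write $\log X_c(z)=\frac1\pi\int_0^\infty\frac{\alpha(\tau)}{\tau-z}\,d\tau$. As $z\to\infty$ in $\mathbb C\setminus\Real_+$ I would split the integral at $\tau=|z|^{1/2}$: on $\big(0,|z|^{1/2}\big)$ use that $\alpha$ is bounded (by \eqref{alas}) together with $|\tau-z|\gtrsim|z|$, and on $\big(|z|^{1/2},\infty\big)$ use $\alpha(\tau)=O(\tau^{1-2H})$ from \eqref{alas} together with $|\tau-z|\gtrsim\tau$; since $1-2H<0$ both pieces tend to $0$, hence $X_c(z)\to1$. As $z\to0$, fix a small $\delta>0$ and write $\alpha=\alpha(0+)+\big(\alpha(\tau)-\alpha(0+)\big)$ on $(0,\delta)$: the constant part contributes $\frac{\alpha(0+)}\pi\big(\log(\delta-z)-\log(-z)\big)=(\tfrac12-H)\log(-z)+O(1)$ using $\alpha(0+)=\pi(H-\tfrac12)$ from \eqref{alas}, the H\"older continuity of $\alpha$ near the origin makes $\int_0^\delta\frac{\alpha(\tau)-\alpha(0+)}{\tau-z}\,d\tau$ bounded, and $\int_\delta^\infty\frac{\alpha(\tau)}{\tau-z}\,d\tau$ has a finite limit. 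Exponentiating gives the two-sided bound $|X_c(z)|\asymp|z|^{1/2-H}$, in particular $X_c(z)=O(z^{1/2-H})$.

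For the identity, consider $G(z):=\eps\,X_c(z)X_c(-z)/\Lambda(z)$, which is holomorphic and non-vanishing on $\mathbb C\setminus\Real$ because $\Lambda$ is non-vanishing and sectionally holomorphic there by Lemma~\ref{lem:prop} and $X_c$ is an exponential. Across $\Real_+$ the factor $X_c(-z)$ is continuous, so $G^+/G^-=(X_c^+/X_c^-)(\Lambda^-/\Lambda^+)=1$ by the homogeneous boundary condition $X_c^+/X_c^-=\Lambda^+/\Lambda^-$ that defines $X_c$. Across $\Real_-$ the factor $X_c(z)$ is continuous, while $z\mapsto X_c(-z)$ has multiplicative jump $X_c^-(-\tau)/X_c^+(-\tau)=\Lambda^-(-\tau)/\Lambda^+(-\tau)$, so $G^+/G^-=\big(\Lambda^-(-\tau)/\Lambda^+(-\tau)\big)\big(\Lambda^-(\tau)/\Lambda^+(\tau)\big)=1$ by the symmetry \eqref{sym2}. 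Hence $G$ continues holomorphically across $\Real\setminus\{0\}$. Near the origin \eqref{Lambda} gives $|\Lambda(z)|\asymp|z|^{1-2H}$ (the coefficient of $z^{1-2H}+(-z)^{1-2H}$ is $\tfrac{\sigma^2}{2}\Gamma(2H+1)$ and $\sin(\pi H)\neq0$), which together with $|X_c(z)X_c(-z)|\asymp|z|^{1-2H}$ from the first step keeps $G$ bounded; thus $0$ is a removable singularity and $G$ extends to a bounded entire function. Since $X_c(z)\to1$, $X_c(-z)\to1$ and $\Lambda(z)/\eps\to1$ as $z\to\infty$, we get $G(z)\to1$, so $G\equiv1$ by Liouville's theorem, which is exactly \eqref{XXL}.

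The main obstacle is the bookkeeping of the branch structure along $\Real_-$: there $X_c(\cdot)$ is analytic but $z\mapsto X_c(-z)$ is not, and one must correctly identify which of $X_c^\pm(-\tau)$ arises after the reflection $z\mapsto -z$ and then reconcile the resulting multiplicative jump with that of $\Lambda$ precisely through \eqref{sym2}. A secondary technical point is that removability of the singularity at $z=0$ genuinely requires the two-sided estimate $|X_c(z)|\asymp|z|^{1/2-H}$, not merely the one-sided bound recorded in \eqref{Xest}, so that the $|z|^{1-2H}$ blow-ups of the numerator and denominator of $G$ cancel.
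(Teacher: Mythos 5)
Your proposal is correct, but for the identity \eqref{XXL} it takes a genuinely different route from the paper. The paper argues directly: using the Plemelj representation \eqref{Xc} of $\log X_c$, a change of variable $\tau\mapsto-\tau$ and the symmetry \eqref{sym2}, it rewrites $\log\big(X_c(z)X_c(-z)\big)$ as the difference of two Cauchy integrals of $\log(\Lambda^\pm(\tau)/\eps)$ taken over all of $\Real$, and then evaluates each by closing the contour in the half-plane where $\Lambda$ is holomorphic; for $\Im z>0$ the first integral yields $\log(\Lambda(z)/\eps)$ and the second vanishes. Your argument instead forms the quotient $G(z)=\eps\,X_c(z)X_c(-z)/\Lambda(z)$ and shows it is entire (jump across $\Real_+$ cancels by the homogeneous boundary condition defining $X_c$, jump across $\Real_-$ cancels by \eqref{sym2}, and the singularity at $0$ is removable after matching the $|z|^{1-2H}$ rates), bounded, and tends to $1$ at infinity, hence $G\equiv 1$ by Liouville. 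Both proofs hinge on \eqref{sym2}; the paper's is a one-line contour computation once the integrals are rearranged, while yours is more structural and has the virtue of isolating exactly where the two-sided estimate $|X_c(z)|\asymp|z|^{1/2-H}$ near the origin is actually needed — a point the lemma's one-sided statement \eqref{Xest} and the paper's terse ``readily follows'' silently pass over. Your derivation of \eqref{Xest} itself (splitting the Cauchy integral at $|z|^{1/2}$ for the tail, and at a fixed $\delta$ for the origin, using H\"older continuity of $\alpha$ and \eqref{alas}) fleshes out what the paper leaves unsaid and is sound.
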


\begin{proof}
The claimed asymptotics readily follows from \eqref{alas}. To prove \eqref{XXL},  we can write
\begin{align*}
& \log X_c(z)X_c(-z) = \\
& \frac 1 {2\pi \i} \int_0^\infty \frac{1}{\tau-z} \log \frac{\eps^{-1}\Lambda^+(\tau)}{\eps^{-1}\Lambda^-(\tau)}d\tau
+
\frac 1 {2\pi \i} \int_0^\infty \frac{1}{\tau+z} \log \frac{\eps^{-1}\Lambda^+(\tau)}{\eps^{-1}\Lambda^-(\tau)}d\tau.
\end{align*}
By changing the integration variable and using the symmetry \eqref{sym2}, the second integral can be written as 
$$
\frac 1 {2\pi \i} \int_0^\infty \frac{1}{\tau+z} \log \frac{\eps^{-1}\Lambda^+(\tau)}{\eps^{-1}\Lambda^-(\tau)}d\tau =
-\frac 1 {2\pi \i} \int_{-\infty}^0 \frac{1}{\tau-z} \log \frac{\eps^{-1}\Lambda^-(\tau)}{\eps^{-1}\Lambda^+(\tau)}d\tau.
$$
Since $\log \big(\Lambda^\pm(\tau)/\eps\big) = O(\tau^{1-2H})$, this implies 
$$
\log X_c(z)X_c(-z) = 
\frac 1 {2\pi \i} \int_{-\infty}^\infty \frac{\log \big( \Lambda^+(\tau)/\eps\big)}{\tau-z}  d\tau
-\frac 1 {2\pi \i} \int_{-\infty}^\infty \frac{\log \big( \Lambda^-(\tau)/\eps\big)}{\tau-z}  d\tau.
$$
The function $\Lambda(z)$ is non-vanishing and holomorphic on the lower and upper half-planes, hence each of the integrals 
can be computed by the standard contour integration. When $\Im(z)>0$ the first integral gives $\log (\Lambda(z)/\eps)$
and the second vanishes, which proves validity of \eqref{XXL} in the upper half-plane. The same argument applies to the lower half-plane.
\end{proof}

Now let us define 
\begin{equation}\label{SD}
\begin{aligned}
S(z) := & \frac{\Phi_0(z)+\Phi_1(z)}{2X(z)},
\\
D(z) := & \frac{\Phi_0(z)-\Phi_1(z)}{2X(z)}.
\end{aligned}
\end{equation}
These functions are also sectionally holomorphic on $\mathbb C\setminus \Real_+$ and, in view of \eqref{bndPhi}, satisfy the 
{\em decoupled} boundary conditions
\begin{equation}\label{SDbnd}
\begin{aligned} 
S^+(\tau) -   S^-(\tau)  = &\phantom+\ 2\i h(\tau) e^{-t \tau }S(-\tau) ,\\
D^+(\tau) -   D^-(\tau)  = &  - 2\i h(\tau)e^{-t \tau }  D(-\tau),
\end{aligned}
\qquad \forall \tau\in \Real_+,
\end{equation}
where we defined  
$$
h(\tau): =   \frac 1{2\i}\Big(\frac{X^+(\tau)}{X^-(\tau)}-1\Big)\frac {X(-\tau)}{X^+(\tau)}.
$$
This function turns out to be  real valued, 
\begin{equation}\label{h}
\begin{aligned}
h(\tau)  =  
&
\frac 1{2\i}\Big(e^{2\i\alpha(\tau)}-1\Big)\exp \Big(-\frac {2 \tau} \pi\dashint_0^\infty\frac{\alpha(s)}{s^2-\tau^2}ds-\i\alpha(\tau)\Big) =\\
&
\exp \Big(-\frac 1 \pi \int_0^\infty  \alpha'(s)\log\Big|\frac{\tau+s}{\tau-s}\Big|ds\Big)\sin\alpha(\tau),
\end{aligned}
\end{equation}
where the dashed integral is the Cauchy principle value.

In view of estimates \eqref{tozero} and \eqref{Xest}, the functions $S(-\tau)$ and $D(-\tau)$ have at most square 
integrable singularities at the origin if we choose $k\le 0$. From here on we will fix $k=0$ so that $X(z)=X_c(z)$.
This choice is not the only possible, but it makes further calculations simpler.  
Thus the expressions in the right hand side of \eqref{SDbnd} satisfy the H\"older condition on 
$\Real_+\cup\{\infty\}$ and therefore, by the Sokhotski-Plemelj theorem, the  functions  \eqref{SD} satisfy 
\begin{equation}\label{SDeq}
\begin{aligned}
S(z) & = \phantom+\frac 1 { \pi} \int_0^\infty \frac{ h(\tau) e^{-t \tau }}{\tau-z}S(-\tau)d\tau - \frac \eps 2,\\
D(z) & = -\frac 1 {\pi} \int_0^\infty \frac{h(\tau)e^{-t \tau }}{\tau-z}D(-\tau)d\tau - \frac \eps 2,
\end{aligned}\quad z\in \mathbb C\setminus \Real_+.
\end{equation}
Constants in the right hand side match the growth of $S(z)$ and $D(z)$ as $z\to\infty$ in view of 
estimates \eqref{toinf} and \eqref{Xest}.

Consider now a pair of auxiliary integral equations 
\begin{equation}\label{pqeq}
\begin{aligned}
p_t(s) =& \phantom +\ \frac 1 { \pi} \int_0^\infty \frac{ h(\tau) e^{-t \tau }}{\tau+s}p_t(\tau)d\tau - \frac 1 2,
\\
q_t(s) =& - \frac 1 { \pi} \int_0^\infty \frac{ h(\tau) e^{-t \tau }}{\tau+s}q_t(\tau)d\tau - \frac 1 2,
\end{aligned}\quad s\in \Real_+.
\end{equation}
In the next subsection we will argue that, for all sufficiently large $t$, they have unique solutions 
such that $q_t(\cdot)+\frac 1 2$ and $p_t(\cdot)+\frac1 2$ belong to $L^2(\Real_+)$. 
Setting $z:=-\tau$ for $\tau\in \Real_+$ in \eqref{SDeq} shows that  $S(-\tau)$ and $D(-\tau)$ solve \eqref{pqeq}
multiplied by $\eps$. Since by construction $S(-\tau)$ and $D(-\tau)$ are square integrable near the origin, 
due to uniqueness of the solutions to \eqref{pqeq}, they must coincide with $\eps p_t(\tau)$ and $\eps q_t(\tau)$ and, consequently,  
$$
S(z) = \eps p_t(-z)\quad \text{and}\quad
D(z)=\eps q_t(-z),
$$
where $q_t(z)$ and $p_t(z)$ are the unique sectionally holomorphic extensions to $\mathbb C \setminus \Real_-$.
Plugging these expressions along with \eqref{XXL} and \eqref{SD} into \eqref{rep} we obtain the following result.

\begin{lem}\label{lem:LT}
The Laplace transform \eqref{Lap} satisfies 
\begin{equation}\label{hatht}
\widehat g_t(z) -1 = - \frac{ 1}{ X(-z)} + \widehat R_t(z),  \quad z\in \mathbb C,
\end{equation}
where 
\begin{equation}\label{Rtz}
\widehat R_t(z):=   \frac { 1 }{X(-z)}  \big(p_t(-z)+q_t(-z)+1\big)  + e^{-tz}\frac{1}{X(z)} \big(p_t(z)-q_t(z)\big).
\end{equation}
\end{lem}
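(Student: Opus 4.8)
The plan is to assemble the ingredients already put in place. Start from the representation \eqref{rep} and invert the linear change of unknowns \eqref{SD}: since $S+D=\Phi_0/X$ and $S-D=\Phi_1/X$, one has $\Phi_0(z)=X(z)\big(S(z)+D(z)\big)$ and $\Phi_1(-z)=X(-z)\big(S(-z)-D(-z)\big)$, so the numerator in \eqref{rep} equals $X(z)\big(S(z)+D(z)\big)+e^{-tz}X(-z)\big(S(-z)-D(-z)\big)$. For the denominator use the factorization \eqref{XXL}, namely $\Lambda(z)=\eps\,X(z)X(-z)$. The factor $X(z)$ cancels in the first term and $X(-z)$ in the second, leaving
$\widehat g_t(z)-1=\eps^{-1}\big(S(z)+D(z)\big)/X(-z)+e^{-tz}\eps^{-1}\big(S(-z)-D(-z)\big)/X(z)$.

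Next substitute the identifications $S(z)=\eps\,p_t(-z)$ and $D(z)=\eps\,q_t(-z)$ (equivalently $S(-z)=\eps\,p_t(z)$, $D(-z)=\eps\,q_t(z)$). These rest on three facts: (i) setting $z:=-\tau$, $\tau\in\Real_+$, in \eqref{SDeq} shows that $\eps^{-1}S(-\tau)$ and $\eps^{-1}D(-\tau)$ satisfy the auxiliary equations \eqref{pqeq} --- the constants $-\tfrac12$ there match those in \eqref{SDeq} after dividing by $\eps$, and the constants themselves are pinned down by the behavior at $z\to\infty$ via \eqref{toinf} and \eqref{Xest}; (ii) by \eqref{tozero} together with \eqref{Xest} the functions $S(-\tau)$ and $D(-\tau)$ have at most square-integrable singularities at the origin, so they belong to the solution class $\big(\cdot+\tfrac12\big)\in L^2(\Real_+)$ of \eqref{pqeq}; (iii) the solutions of \eqref{pqeq} in that class are unique for all sufficiently large $t$ (the content of the next subsection). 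Hence $\eps^{-1}S(-\tau)=p_t(\tau)$ and $\eps^{-1}D(-\tau)=q_t(\tau)$ on $\Real_+$, and passing to the sectionally holomorphic extensions to $\mathbb C\setminus\Real_-$ gives the claimed relations. Substituting and regrouping $\big(p_t(-z)+q_t(-z)\big)/X(-z)=-1/X(-z)+\big(p_t(-z)+q_t(-z)+1\big)/X(-z)$ yields \eqref{hatht} with $\widehat R_t$ exactly as in \eqref{Rtz}.

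One last point: the right-hand side of \eqref{hatht} is a priori only sectionally holomorphic across $\Real$, whereas $\widehat g_t$ is entire by \eqref{Lap}. The apparent jumps cancel, however: this is precisely the removability condition \eqref{bndPhi} that was used to derive the decoupled boundary relations \eqref{SDbnd} (the jump across $\Real_-$ being handled by the analogous symmetry), so the identity is valid for all $z\in\mathbb C$.

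The main obstacle is not this bookkeeping but input (iii): establishing unique solvability of the auxiliary integral equations \eqref{pqeq} in the prescribed $L^2$ class for large $t$, which legitimizes the identification $S(-\cdot)=\eps p_t$, $D(-\cdot)=\eps q_t$. Everything else is the algebra of the Hilbert-problem factorization, with only mild care needed in tracking the fixed branch $X=X_c$ (the choice $k=0$) and the asymptotics \eqref{toinf}, \eqref{Xest} that fix the additive constants.
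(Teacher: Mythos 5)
Your proposal reproduces the paper's argument essentially verbatim: invert the change of unknowns \eqref{SD} to write $\Phi_0(z)=X(z)(S(z)+D(z))$ and $\Phi_1(-z)=X(-z)(S(-z)-D(-z))$, use the factorization \eqref{XXL} to cancel $X(z)$ and $X(-z)$ from the denominator of \eqref{rep}, identify $S(-\tau)=\eps p_t(\tau)$, $D(-\tau)=\eps q_t(\tau)$ via \eqref{SDeq}, square-integrability near the origin, and uniqueness of solutions to \eqref{pqeq} (established in the next subsection by the contraction bound of Lemma \ref{lem:contr}), and regroup to isolate $-1/X(-z)$. The closing remark about the jumps across $\Real$ cancelling is a correct sanity check, though it is not strictly a step one must verify: the right-hand side is obtained from \eqref{rep} by identities, and \eqref{rep} has an entire left-hand side, so the removability is inherited automatically.
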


\subsubsection{The auxiliary equations}
Consider the integral operator in \eqref{pqeq}
\begin{equation}\label{Aop}
(A_t f)(s) := \frac 1 { \pi} \int_0^\infty \frac{ h(\tau) e^{-t \tau }}{\tau+s}f(\tau)d\tau.
\end{equation}
The following lemma asserts that it is a contraction on $L^2(\Real_+)$ for all sufficiently large $t$.

\begin{lem}\label{lem:contr}
For any closed ball $B\subset \Theta$, there exist $T_{\min}>0$ and $\beta\in (0,1)$ such that 
$$
\|A_t f\|\le (1-\beta) \|f\|, \quad \forall f\in L^2(\Real_+), \quad \forall t\ge T_{\min}, \ \theta \in B. 
$$
\end{lem}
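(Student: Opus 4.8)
The plan is to bound the operator norm of $A_t$ on $L^2(\Real_+)$ by writing $A_t$ as a composition of a multiplication operator and a fixed (Stieltjes-type) kernel operator, and then exploit the decay of $e^{-t\tau}$ to beat the constant $1$. Concretely, for $f\in L^2(\Real_+)$ write
$$
(A_t f)(s) = \frac 1\pi \int_0^\infty \frac{e^{-t\tau}h(\tau)}{\tau+s} f(\tau)\,d\tau,
$$
so that $A_t = G \circ M_t$, where $(M_t f)(\tau) = e^{-t\tau}h(\tau) f(\tau)$ and $(G\varphi)(s) = \tfrac 1\pi\int_0^\infty \varphi(\tau)/(\tau+s)\,d\tau$ is the classical Stieltjes (Hilbert-type) operator. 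The first step is to recall the sharp bound $\|G\|_{L^2(\Real_+)\to L^2(\Real_+)} \le 1$; this is the Hilbert inequality $\int_0^\infty\int_0^\infty \varphi(s)\varphi(\tau)/(s+\tau)\,ds\,d\tau \le \pi\|\varphi\|^2$, which is standard (e.g. via the substitution reducing it to the $L^1$ norm of $x\mapsto 1/(1+x)\cdot x^{-1/2}$ equal to $\pi$). Hence $\|A_t\| \le \|M_t\|_{L^2\to L^2} = \esssup_{\tau>0} e^{-t\tau}|h(\tau)|$.

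The second step is to show $\sup_{\tau>0} e^{-t\tau}|h(\tau)| \le 1-\beta$ for $t$ large, uniformly in $\theta\in B$. From the representation \eqref{h}, $|h(\tau)| = \exp\!\big(-\tfrac1\pi\int_0^\infty \alpha'(s)\log|\tfrac{\tau+s}{\tau-s}|\,ds\big)\,|\sin\alpha(\tau)|$. Two regimes must be handled. Near $\tau=0$: since $\alpha(0+) = \pi(H-\tfrac12)\in(\tfrac\pi4,\tfrac\pi2)$ by \eqref{alas}, $|\sin\alpha(\tau)|$ stays bounded away from $1$ only if $H\ne\tfrac34$, but in general $|\sin\alpha(\tau)|<1$ except possibly at isolated points; the exponential factor is a fixed bounded function of $\tau$ (bounded because $\alpha'$ is integrable against $\log|(\tau+s)/(\tau-s)|$, a consequence of $\alpha$ being monotone with the asymptotics \eqref{alas}), and crucially it does not depend on $t$. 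So there is a finite constant $C_B = \sup_{\theta\in B}\sup_{\tau>0}|h(\tau)| < \infty$, and in fact one can check $|h(\tau)|$ is itself bounded by a constant strictly less than... no — rather, one uses that $|h(\tau)|\le C_B$ is bounded and $e^{-t\tau}<1$, so for $\tau\ge\tau_0$ fixed we get $e^{-t\tau}|h(\tau)|\le C_B e^{-t\tau_0}\to 0$. The issue is small $\tau$, where $e^{-t\tau}\approx 1$; there one needs $\sup_{0<\tau\le\tau_0}|h(\tau)|<1$. This follows because $h(0+)$: as $\tau\to0$, the Cauchy-principal-value exponent in the first line of \eqref{h} tends to a finite limit and $\sin\alpha(0+) = \sin(\pi(H-\tfrac12)) = \cos(\pi H)\cdot(\pm1)$... one must simply compute $\lim_{\tau\to0}|h(\tau)|$ from \eqref{h} and from the identity \eqref{XXL} (which gives $|X^+(\tau)/X^-(\tau)-1|$ and $|X(-\tau)/X^+(\tau)|$ in closed form via $\Lambda^\pm$) and verify it equals $|\sin\alpha(0+)|\cdot(\text{something})<1$; the boundary identity $h(\tau) = \tfrac{1}{2\mathrm i}(e^{2\mathrm i\alpha(\tau)}-1)\tfrac{X(-\tau)}{X^+(\tau)}$ together with $|X^+(\tau)|^2 \cdot |X(-\tau)/X^+(\tau)|$ relating to $\Lambda^+(\tau)$ should make $|h|$ explicitly $\le |\sin\alpha(\tau)| \le 1$ with equality only in a limit, giving strict inequality on any compact $\tau$-interval and uniformly over the compact $B$.

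So the third step assembles these: pick $\tau_0>0$; on $(0,\tau_0]$ use $e^{-t\tau}|h(\tau)|\le \sup_{(0,\tau_0]}|h| =: 1-2\beta_0<1$ (uniform in $\theta\in B$ by continuity of all data in $\theta$ and compactness); on $[\tau_0,\infty)$ use $e^{-t\tau}|h(\tau)|\le C_B e^{-t\tau_0}$, which is $<1-2\beta_0$ once $t\ge T_{\min}$ for a suitable $T_{\min} = T_{\min}(B,\tau_0)$. Taking $\beta = \beta_0$ yields $\|A_t\|\le \esssup_\tau e^{-t\tau}|h(\tau)| \le 1-\beta$ for all $t\ge T_{\min}$, $\theta\in B$, which is the claim.

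\textbf{Main obstacle.} The genuinely delicate point is establishing $\sup_{\tau>0}|h(\tau)| \le 1$ with the strict inequality $\sup_{0<\tau\le\tau_0}|h(\tau)|<1$ uniformly over $\theta\in B$ — i.e. controlling the Cauchy-principal-value exponential factor in \eqref{h} and showing it does not overshoot the $|\sin\alpha(\tau)|\le 1$ bound. One expects this to follow cleanly from the closed-form boundary values in Lemma \ref{lem:prop} and the identity \eqref{XXL}: rewriting $h(\tau)$ via $h(\tau) = \Im\!\big(e^{\mathrm i\alpha(\tau)}\big)\cdot\big|X(-\tau)/X^+(\tau)\big|\cdot e^{\mathrm i(\text{phase})}$ and observing the modulus $|X(-\tau)|/|X^+(\tau)|$ equals $1$ on $\Real_+$ because $|X^+(\tau)|^2 = |X^+(\tau)X^-(\tau)| = |X_c(\tau)X_c(-\tau)|\cdot(\dots)$ — wait, more carefully: from \eqref{XXL}, $X_c(\tau)X_c(-\tau) = \Lambda(\tau)/\eps$ is real, and the boundary values satisfy $X^+(\tau)\overline{X^-(\tau)}^{-1}$... the upshot one aims for is $|h(\tau)| = |\sin\alpha(\tau)| \le 1$ exactly, making the whole argument transparent; verifying this identity is where the real work lies.
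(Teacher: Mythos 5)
Your overall strategy is essentially the paper's: bound $\|A_t\|$ by $\esssup_\tau\, e^{-t\tau} h(\tau)$ via the Hilbert/Stieltjes operator norm (the paper runs the Schur test directly rather than factoring $A_t = G\circ M_t$, but these are the same inequality), and then beat $1$ using the exponential decay for large $\tau$ and the behaviour of $h$ near $\tau=0$. The factorization into $G\circ M_t$ with $\|G\|\le 1$ is clean and correct.

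The gap is in the final step, where you circle around the wrong question. You try to establish $\sup_{(0,\tau_0]} h(\tau) < 1$ by proving a global identity like $|h(\tau)| = |\sin\alpha(\tau)|$ --- but that identity is neither needed nor, in all likelihood, true, because the exponential prefactor in \eqref{h} need not be $\le 1$ away from the origin. What \emph{is} needed is only the single limit value $h(0+)$. From the second line of \eqref{h},
$$
h(\tau)= \exp\Big(-\tfrac 1 \pi \int_0^\infty \alpha'(s)\log\big|\tfrac{\tau+s}{\tau-s}\big|\,ds\Big)\sin\alpha(\tau),
$$
and as $\tau\to 0$ the logarithmic factor tends to $0$ pointwise (with a dominated-convergence justification coming from \eqref{aldifas}), so the exponential prefactor tends to $1$ and $h(0+)=\sin\alpha(0+)=\sin\big(\pi(H-\tfrac 1 2)\big)$. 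For $H\in(\tfrac 3 4,1)$ this lies strictly in $(1/\sqrt 2,1)$ --- the relevant constraint is $H<1$, not $H\neq\tfrac34$ as you write. Compactness of $B$ then makes $c:=\sup_{\theta\in B}h(0+)$ strictly less than $1$, and continuity of $h$ in $(\tau,\theta)$ yields an $r>0$ with $h(\tau)\le \tfrac12(1+c)$ for $\tau\in[0,r]$ uniformly on $B$. For $\tau>r$ one needs nothing finer than $\|h\|_\infty<\infty$ (which follows from continuity and the decay of $\sin\alpha(\tau)$ at infinity), since then $e^{-t\tau}h(\tau)\le\|h\|_\infty e^{-rt}\to 0$. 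Assembling exactly as you sketch in your ``third step'' finishes the proof; the ``main obstacle'' you flag dissolves once you aim for $h(0+)<1$ rather than $\sup_\tau h(\tau)\le 1$.
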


\begin{proof}
The function $h(\tau)$ defined in \eqref{h} is continuous, nonnegative,  vanishes as $\tau\to \infty$ and satisfies, cf.  \eqref{alas},
$$
h(0+)=\sin \alpha(0+)= \sin \big(\pi  ( H-\tfrac 1 2)\big)  \in (0,1).
$$
Then $c := \sup_{\theta\in B}h(0+) \in (0,1)$ and there exists $r>0$ such that $h(\tau)\le \frac 1 2 c  +\frac 1 2 =: 1-\beta \in (0,1)$ 
for all $\tau \in [0,r]$. Then for any $\tau \ge 0$, 
$$
h(\tau)e^{-\tau t}\le (1-\beta) \one{\tau \le r}+\|h\|_\infty e^{-r t}\one{\tau >r} \le 1-\beta,
$$
where the last inequality  holds for all $t\ge \frac 1 r \log \frac{\|h\|_\infty}{1-\beta} =: T_{\min}$.

Thus for all $t\ge T_{\min}$ and any $f,g\in L^2(\Real_+)$, by the Cauchy--Schwarz inequality, 
\begin{align*}
&
\big|\langle g, A_t f\rangle\big| \le   \frac {1-\beta}\pi  \int_0^\infty |g(s)|   \int_0^\infty \frac{ 1}{\tau+s}|f(\tau)|d\tau ds \le \\
&
\frac {1-\beta}\pi \left(\int_0^\infty f(\tau)^2\int_0^\infty \frac{\sqrt{\tau/s}}{\tau+s}dsd\tau\right)^{1/2}
\left(\int_0^\infty g(s)^2\int_0^\infty \frac{\sqrt{s/\tau}}{\tau+s}d\tau ds\right)^{1/2} = \\
&
 (1-\beta)\|g\|\|f\|.
\end{align*} 
Hence $\|A_t f\|^2=\langle A_tf, A_tf\rangle \le (1-\beta) \|A_tf\|\|f\|$, which proves the clam.
\end{proof}

The equations in \eqref{pqeq} can be written as 
\begin{equation}\label{Af}
f + \tfrac 1 2= \pm A_t (f+\tfrac 12 ) \mp \tfrac 1 2 (A_t 1).
\end{equation}
A direct calculation shows that  $A_t1\in L^2(\Real_+)$. Hence these equations have unique solutions in $L^2(\Real_+)$  
given, e.g., by the Neumann series. The estimates for these solutions, derived in the next lemma, play the key role in the 
asymptotic analysis. 

\begin{lem}\label{lem:pqbnd}
For any closed ball $B\subset \Theta$, 
there exist  constants $r_{\max}>0$, $T_{\min}>0$ and $C>0$ such that for any $r\in [0,r_{\max}]$ and all $t\ge T_{\min}$
$$
\int_{-\infty}^\infty \big|m_t(i\lambda)\big|^2 |\lambda|^{-r}d\lambda \le  C t^{r-1}, 
$$
where $m_t(z)$ is any of the functions 
\begin{equation}\label{pqbnd}
\Big\{
p_t(z)+\tfrac 1 2,\ q_t(z)+\tfrac 1 2,\ \partial_j p_t(z), \ \partial_j q_t(z), \ \partial_i\partial_j p_t(z), \
\partial_i\partial_j q_t(z) 
\Big\}.
\end{equation}

\end{lem}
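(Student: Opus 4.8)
The plan is to combine two structural facts already in place: every function in \eqref{pqbnd} is a Cauchy‑type transform of an explicit, exponentially localized density, and after the dilation $\tau\mapsto t\tau$ the integral operator from \eqref{pqeq} becomes a uniform contraction. From \eqref{pqeq}--\eqref{SDeq} one has $p_t(z)+\tfrac12=\tfrac1\pi\int_0^\infty h(\tau)e^{-t\tau}p_t(\tau)(\tau+z)^{-1}d\tau$ on $\mathbb C\setminus\Real_-$; differentiating \eqref{pqeq} in $\theta$ (the constant $-\tfrac12$ drops, and $h$ with its $\theta$‑derivatives is smooth and bounded on compacts) shows that every $m_t$ in \eqref{pqbnd} has the form $m_t(z)=\tfrac1\pi\int_0^\infty G_t(\tau)(\tau+z)^{-1}d\tau$, with $G_t$ a finite sum of terms $\partial^a h\cdot e^{-t\cdot}\cdot\partial^b p_t$ (resp.\ $\partial^b q_t$), $|a|+|b|\le 2$. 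The $L^2$‑theory of Lemma \ref{lem:contr} makes all the $\partial^b p_t,\partial^b q_t$ well‑defined, so $G_t\in L^1(\Real_+)\cap L^2(\Real_+)$ and $|G_t(\tau)|\le Ce^{-t\tau}\bigl(1+|p_t(\tau)+\tfrac12|+|\nabla p_t(\tau)|+|\nabla^2 p_t(\tau)|+|q_t(\tau)+\tfrac12|+\cdots\bigr)$ with $C=C(B)$.

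The first step is a transform inequality. From $|m_t(i\lambda)|\le\tfrac1\pi\int_0^\infty|G_t(\tau)|(\tau^2+\lambda^2)^{-1/2}d\tau$, squaring and integrating against $|\lambda|^{-r}$ gives by Fubini
\[
\int_{-\infty}^\infty|m_t(i\lambda)|^2|\lambda|^{-r}d\lambda
\le\frac1{\pi^2}\int_0^\infty\!\!\int_0^\infty|G_t(\tau)|\,|G_t(\sigma)|\,J_r(\tau,\sigma)\,d\tau\,d\sigma ,
\]
$J_r(\tau,\sigma):=\int_{-\infty}^\infty|\lambda|^{-r}\bigl((\tau^2+\lambda^2)(\sigma^2+\lambda^2)\bigr)^{-1/2}d\lambda$. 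For $0\le r<1$ this positive kernel is finite, homogeneous of degree $-(1+r)$, and a direct estimate (splitting the $\lambda$‑integral at $\tau\wedge\sigma$ and $\tau\vee\sigma$) gives $J_r(\tau,\sigma)\le C_r(\tau\sigma)^{-(1+r)/2}$, hence
\[
\int_{-\infty}^\infty|m_t(i\lambda)|^2|\lambda|^{-r}d\lambda\le C_r\Bigl(\int_0^\infty\tau^{-(1+r)/2}|G_t(\tau)|\,d\tau\Bigr)^2 .
\]
Using $|G_t|\le Ce^{-t\tau}(\cdots)$ and substituting $\tau=x/t$ (which turns $\int_0^\infty\tau^{-(1+r)/2}e^{-t\tau}\phi(\tau)d\tau$ into $t^{(r-1)/2}\int_0^\infty x^{-(1+r)/2}e^{-x}\phi(x/t)dx$) reduces the whole lemma to the single uniform bound
\[
\sup_{t\ge T_{\min},\,\theta\in B}\ \int_0^\infty x^{-(1+r)/2}e^{-x}\Bigl(1+\bigl|\tilde p_t(x)+\tfrac12\bigr|+\bigl|\nabla\tilde p_t(x)\bigr|+\bigl|\nabla^2\tilde p_t(x)\bigr|\Bigr)dx<\infty ,
\]
where $\tilde p_t(x):=p_t(x/t)$, and likewise for $q_t$.

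The second step is a rescaled fixed‑point analysis with a barrier near the origin. Rescaling \eqref{pqeq} and its $\theta$‑derivatives, $\tilde p_t+\tfrac12=\widetilde A_t(\tilde p_t+\tfrac12)-\tfrac12\widetilde A_t1$, $\nabla\tilde p_t=(I-\widetilde A_t)^{-1}\widetilde B_t\,\tilde p_t$, and analogously at second order, where $\widetilde A_t$ (resp.\ $\widetilde B_t$) has kernel $\tfrac1\pi h(y/t)e^{-y}(x+y)^{-1}$ (resp.\ with $\nabla h$ in place of $h$). The unitary rescaling $f\mapsto t^{1/2}f(t\cdot)$ conjugates $\widetilde A_t$ to $A_t$, so Lemma \ref{lem:contr} gives $\|\widetilde A_t\|_{L^2\to L^2}\le1-\beta$ for $t\ge T_{\min}$, while $\widetilde A_t1,\widetilde B_t1,\widetilde B_t(\tilde p_t+\tfrac12)$ are bounded in $L^2(\Real_+)$ uniformly (since $\widetilde A_t1(x)\le\tfrac{\|h\|_\infty}\pi\int_0^\infty e^{-y}(x+y)^{-1}dy$ and $\int_0^\infty\bigl(\int_0^\infty e^{-y}(x+y)^{-1}dy\bigr)^2dx<\infty$, and $\|h\|_\infty,\|\nabla h\|_\infty$ are bounded on $B$); hence $\tilde p_t+\tfrac12$ and its first two $\theta$‑derivatives are bounded in $L^2(\Real_+)$ uniformly in $t\ge T_{\min},\theta\in B$. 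For the pointwise behaviour near $x=0$, fix $\gamma\in\bigl(\sup_{\theta\in B}(H-\tfrac12),\tfrac12\bigr)$ (nonempty as $\sup_BH<1$); splitting the defining integral at a small fixed $\delta$, using $h(y/t)\to h(0+)=\sin(\pi(H-\tfrac12))$ and $\int_0^\infty y^{-\gamma}(x+y)^{-1}dy=\pi x^{-\gamma}/\sin\pi\gamma$, one gets, for $t\ge T_{\min}$ and $\eta>0$ small,
\[
\widetilde A_t\bigl[\,\cdot^{-\gamma}\,\bigr](x)\le\kappa\,x^{-\gamma}+C\bigl(1+\log_+(1/x)\bigr),\qquad
\kappa:=\frac{\sup_{\theta\in B}\sin(\pi(H-\frac12))+\eta}{\sin\pi\gamma}<1 ,
\]
($\log_+a:=\max(\log a,0)$), together with $\widetilde A_t1(x)\le C(1+\log_+(1/x))$. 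Feeding these into the Neumann series $\tilde p_t+\tfrac12=-\tfrac12\sum_{n\ge1}\widetilde A_t^n1$ and into the corresponding series for the derivatives, the leading singular part is strictly contracted at every step while the logarithmic remainders resum to at most a power $x^{-\gamma}$ (each $\theta$‑differentiation adds one $\log$, reabsorbed since $\gamma$ is strictly above the resonance $H-\tfrac12$); this yields, uniformly in $t\ge T_{\min},\theta\in B$,
\[
\bigl|\tilde p_t(x)+\tfrac12\bigr|+\bigl|\nabla\tilde p_t(x)\bigr|+\bigl|\nabla^2\tilde p_t(x)\bigr|\le C(1+x^{-\gamma}),\qquad x>0,
\]
and likewise for $q_t$ (indeed positivity of the kernel of $\widetilde A_t$ forces $|\tilde q_t+\tfrac12|\le|\tilde p_t+\tfrac12|$, and similarly at each order).

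To finish: since $(1+r)/2+\gamma<1$ exactly for $r<r_{\max}:=1-2\gamma$, and $r_{\max}$ can be taken in $(0,2-2\sup_BH)$, the integral $\int_0^\infty x^{-(1+r)/2}e^{-x}(1+x^{-\gamma})dx$ is finite, so the uniform bound of the first step holds; combined with $\int_0^\infty\tau^{-(1+r)/2}e^{-t\tau}d\tau=\Gamma(\tfrac{1-r}2)t^{(r-1)/2}$ this gives $\int_0^\infty\tau^{-(1+r)/2}|G_t(\tau)|d\tau\le Ct^{(r-1)/2}$, and the transform inequality yields $\int_{-\infty}^\infty|m_t(i\lambda)|^2|\lambda|^{-r}d\lambda\le Ct^{r-1}$ for all $r\in[0,r_{\max}]$, $t\ge T_{\min}$, $\theta\in B$. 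The genuine difficulty is the barrier step: identifying $H-\tfrac12$ as the resonant exponent of the dilated Hilbert‑type operator $\widetilde A_t$ — which is precisely what makes $\gamma\in(H-\tfrac12,\tfrac12)$, hence $r_{\max}>0$, available — and pushing the induction through the $\theta$‑derivatives while keeping the contraction constant, the $L^2$‑bounds of the source terms, and the barrier constant uniform over the compact ball $B$ and over $t\ge T_{\min}$; the second‑order logarithmic bookkeeping is the other point needing care.
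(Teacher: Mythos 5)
Your overall scheme — represent every $m_t$ as a Cauchy transform of a kernel $G_t$ concentrated near $\tau=0$ with weight $e^{-t\tau}$, transfer to the imaginary axis by a Minkowski/convolution estimate (the ``transform inequality'' you write is in fact the same step the paper uses), then rescale $\tau\mapsto x/t$ so that the $t$-dependence is pulled out as an explicit power and the residual operator $\widetilde A_t$ becomes uniformly contractive — is coherent and genuinely different from the paper's. The paper instead works directly with the unrescaled weighted operator $B_t$ with kernel $\frac1\pi\frac{h(\tau)e^{-t\tau}}{\tau+s}(\tau/s)^r$, shows it is an $L^2(\Real_+)$-contraction for $r$ small, and thereby gets a \emph{weighted $L^2$} bound $\int_0^\infty(p_t(s)+\tfrac12)^2 s^{-2r}\,ds\le Ct^{2r-1}$; no pointwise behaviour of $p_t$ near $s=0$ is ever needed. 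Your route replaces this weighted $L^2$ estimate by a \emph{pointwise} bound $|\tilde p_t(x)+\tfrac12|\le C(1+x^{-\gamma})$ after rescaling.

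That pointwise bound is the place where your argument has a genuine gap. You derive $\widetilde A_t\bigl[\cdot^{-\gamma}\bigr](x)\le\kappa x^{-\gamma}+C(1+\log_+(1/x))$ with $\kappa<1$, and from this you assert that the Neumann series $\tilde p_t+\tfrac12=-\tfrac12\sum_{n\ge1}\widetilde A_t^n 1$ and its $\theta$-derivatives are dominated by $C(1+x^{-\gamma})$ because ``the logarithmic remainders resum to at most a power.'' This is a heuristic, not a proof. Two things are missing. First, the barrier inequality you state is only an asymptotic statement near $x=0$ and $x=\infty$; for a supersolution argument one needs a function $b$ with $\widetilde A_t b+|\text{source}|\le b$ for \emph{all} $x>0$, uniformly over $t\ge T_{\min}$ and $\theta\in B$, and it is not obvious how to choose $b$ so that the additive constant $C$ and the source $\tfrac12\widetilde A_t 1\sim\log(1/x)$ are absorbed over intermediate values of $x$ without ruining the constant in the singular region. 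Second, iterating $\widetilde A_t$ on the log-singular source does not stay within a fixed $\log$-order: crudely, $\widetilde A_t^n 1(x)\sim(\|h\|_\infty/\pi)^n\log^n(1/x)$ near the origin, and the naive series diverges for $x$ small even though the $L^2$ sum converges. Converting this into a convergent pointwise series, and then propagating it through two $\theta$-differentiations with the extra $\log$'s from $\partial_jh$, is exactly the content of the lemma and cannot be dismissed with ``reabsorbed since $\gamma$ is above the resonance''. (Note also that the cheap route does not help: the uniform unweighted $L^2$ bound $\|\tilde p_t+\tfrac12\|_{L^2}\le C$, which you do obtain rigorously via $\|\widetilde A_t\|\le 1-\beta$, plus one application of $\widetilde A_t$ only gives $|\tilde p_t(x)+\tfrac12|\lesssim x^{-1/2}$, and with $\gamma=\tfrac12$ the subsequent integral $\int_0^\infty x^{-(1+r)/2-\gamma}e^{-x}\,dx$ already diverges at $r=0$. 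So the refinement to $\gamma<\tfrac12$ is indispensable, and at present it is precisely the piece that is not proved.)

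If you want to stay on your rescaled track, the most reliable way to close the gap is to establish the \emph{weighted} $L^2$ bound $\int_0^\infty(\tilde p_t(x)+\tfrac12)^2x^{-2\gamma}\,dx\le C$ (which, after undoing the rescaling, is exactly the paper's Lemma-\ref{lem:pqbnd}-type estimate with $r=\gamma$), and then deduce the pointwise bound by one application of $\widetilde A_t$ with Cauchy--Schwarz. But that weighted $L^2$ bound is the paper's step, so at that point the two arguments essentially merge.
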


\begin{proof}
Let us start with proving the bound for the first two functions in \eqref{pqbnd}. 
Calculations are similar for both equations in \eqref{pqeq}
and we will consider the first one for definiteness.  Rearranging it as in \eqref{Af} and multiplying by $s^{-r}$ shows that the function
$\phi(s):= \big(p_t(s) +\tfrac 1 2\big)s^{-r}$ solves the equation
\begin{equation}\label{Beq}
\phi  =   B_t \phi + \psi,
\end{equation}
where $\psi(s) := -\frac 1 2(A_t 1)(s)s^{-r}$ with $A_t$ as in \eqref{Aop} and  
$$
(B_t f)(s):= \frac 1 { \pi} \int_0^\infty \frac{ h(\tau) e^{-t \tau }}{\tau+s}(\tau/s)^r f(\tau) d\tau.
$$
By applying the Minkowski inequality we get
\begin{equation}\label{calcas}
\begin{aligned}
\big\|\psi\big\| = & \left(
\int_0^\infty
\left(\frac 1 2\frac 1 { \pi} \int_0^\infty \frac{ h(\tau) e^{-t \tau }}{\tau+s}s^{-r} d\tau\right)^2
ds
\right)^{1/2} \le  \\
&
\int_0^\infty
\left(  \int_0^\infty \left(\frac{ h(\tau) e^{-t \tau }}{\tau+s}s^{-r}\right)^2 ds\right)^{1/2}
d\tau = \\
&
\int_0^\infty h(\tau) e^{-t \tau }\tau^{-\frac 1 2-r} 
\left(  \int_0^\infty  \frac{u^{-2r}}{(u+1)^2}  du\right)^{1/2}
d\tau \le C  t^{r-\frac 12} 
\end{aligned}
\end{equation}
where $r<1/2$ is assumed.  
Calculations as in the proof of Lemma \ref{lem:contr} show that $B_t$ is a contraction on $L^2(\Real_+)$ for all $t\ge T_{\min}$.
Indeed, for any $f,g\in L^2(\Real_+)$ and $r<1/4$,
\begin{align*}
&
\big|\langle g, B_tf\rangle\big| \le 
\int_0^\infty  |g(s)|\frac 1 { \pi} \int_0^\infty \frac{ h(\tau) e^{-t \tau }}{\tau+s}(\tau/s)^r |f(\tau)| d\tau ds \le \\
&
\frac {1-\beta} { \pi} \int_0^\infty \int_0^\infty |g(s)| \frac{ (s/\tau)^{\frac 1 4} }{\sqrt{\tau+s}} |f(\tau)|  \frac{ (\tau/s)^{r+\frac 1 4} }{\sqrt{\tau+s}} 
  d\tau ds \le \\
&
\frac {1-\beta}\pi 
\left( \int_0^\infty g(s)^2 \int_0^\infty  \frac{ (s/\tau)^{\frac 1 2} }{\tau+s} d\tau ds \right)^{1/2}
 \left(
   \int_0^\infty f(\tau)^2 \int_0^\infty    \frac{ (\tau/s)^{2r+\frac 1 2} }{ \tau+s } 
   dsd\tau
\right)^{1/2} =\\
&
\frac{1-\beta}{\sqrt{\cos(2\pi r)}} \|g\|\|f\|,
\end{align*}
where $\beta$ is given in Lemma \ref{lem:contr}. 
Hence $\|B_tf\|\le (1-\widetilde \beta)\|f\|$ with some $\widetilde\beta>0$ if $r$ is small enough.
This implies $\|\phi\|\le  \widetilde \beta^{-1}\|\psi\|$, that is, 
\begin{equation}\label{bb}
\left(\int_0^\infty (p_t(s)+\tfrac 1 2)^2s^{-2r}ds\right)^{1/2} \le C t^{r-\frac 1 2}.
\end{equation}
We can now prove the bound for the first function in \eqref{pqbnd},
\begin{align*}
&
\int_{-\infty}^\infty \big|p_t(\i\lambda)+\tfrac 1 2\big|^2 |\lambda|^{-r}d\lambda = 
\int_{-\infty}^\infty \left|\frac 1 { \pi} \int_0^\infty \frac{ h(\tau) e^{-t \tau }}{\tau+\i\lambda}p_t(\tau)d\tau\right|^2 |\lambda|^{-r}d\lambda \le \\
&
\int_{-\infty}^\infty \left|  \int_0^\infty \frac{ h(\tau) e^{-t \tau }}{\tau+\i\lambda}(p_t(\tau)+\tfrac 1 2)d\tau\right|^2 |\lambda|^{-r}d\lambda 
+ 
 \int_{-\infty}^\infty \left|  \int_0^\infty \frac{ h(\tau) e^{-t \tau }}{\tau+\i\lambda} d\tau\right|^2 |\lambda|^{-r}d\lambda. 
\end{align*}
Due to the Minkowski inequality, the last integral satisfies 
\begin{align*}
\int_{-\infty}^\infty \left|  \int_0^\infty \frac{ h(\tau) e^{-t \tau }}{\tau+\i\lambda} d\tau\right|^2 |\lambda|^{-r}d\lambda \le &
\left(
\int_0^\infty  h(\tau)  e^{- t \tau }\left(  \int_{-\infty}^\infty \frac{ |\lambda|^{-r}}{\tau^2+\lambda^2}   d\lambda \right)^{1/2} d\tau
\right)^2  \\
=\, &  
C 
\left(
\int_0^\infty     e^{- t \tau } \tau^{-r/2-1/2} d\tau
\right)^2 \le C t^{r-1}.
\end{align*}
The other integral can be bounded similarly,
\begin{align*}
&
\int_{-\infty}^\infty \left|  \int_0^\infty \frac{ h(\tau) e^{-t \tau }}{\tau+\i\lambda}(p_t(\tau)+\tfrac 1 2)d\tau\right|^2 |\lambda|^{-r}d\lambda \le \\
&
\left(
\int_0^\infty  h(\tau)  e^{- t \tau } \big|p_t(\tau)+\tfrac 1 2\big|
\left(  
\int_{-\infty}^\infty \frac{|\lambda|^{-r }}{\tau^2+\lambda^2}  d\lambda
\right)^{1/2}  
d\tau \right)^2 \le \\
&
C
\left(
\int_0^\infty     e^{- t \tau } \big|p_t(\tau)+\tfrac 1 2\big|
\tau^{-r/2-1/2}  
d\tau \right)^2
\le \\
&
C
\int_0^\infty     \big(p_t(\tau)+\tfrac 1 2\big)^2
\tau^{-2r   } d\tau   
\int_0^\infty       e^{-2 t \tau } \tau^{ r -1}
d\tau  \le C t^{r-1},
\end{align*}
where we used \eqref{bb} and applied the Minkowski and Cauchy-Schwarz inequalities. 
This completes the proof for the first two functions in \eqref{pqbnd}. 

The other two bounds are verified similarly. Note that $\phi(s):=\partial_j p_t(s)s^{-r}$ also solves 
the equation \eqref{Beq} with 
$$
\psi(s) := s^{-r}  \frac 1 \pi \int_0^\infty \frac{\partial_j h(\tau) e^{-t\tau}}{\tau+s}p_t(\tau)d\tau.
$$
In view of \eqref{alpha}
\begin{equation}\label{aldifas}
\partial_j \alpha(\tau) = \begin{cases}
O(1), & \tau \to 0,\\
O(\tau^{1-2H}\log \tau), & \tau \to\infty,
\end{cases}
\end{equation}
and, consequently, due to \eqref{h},
$$
\partial_j \log h(\tau) = \begin{cases}
O(1), & \tau\to 0, \\
O(\log \tau), & \tau \to \infty.
\end{cases}
$$
Calculations as in \eqref{calcas} then show that $\|\psi\|\le Ct^{r-1/2}$ and the claimed bound for the next two functions in \eqref{pqbnd}
are proved as above. The last two bounds for the second order derivatives are verified along the same lines. 
%
\end{proof}

\subsection{Proof of Lemma \ref{lem1}} 
In this subsection we will omit $\theta_0$ from the notations for brevity.
Covariance function of the gradient process satisfies 
\begin{equation}\label{specr}
\begin{aligned}
&
 \E\,\partial_i \rho_s(X) \partial_j  \rho_t(X) =  \\
& 
\int_0^t \int_0^s \partial_i g(s,s-x) \partial_j g(t,t-y) K_{\theta_0}(x-y)dxdy +  \\
& 
\eps \int_0^s \partial_i g(s,s-x) \partial_j g(t,t-x)dx =\\
& 
\int_0^t \int_0^s \partial_i g(s,x) \partial_j g(t,y) K_{\theta_0}(y-(x+t-s) )dxdy +  \\
&  
\eps \int_0^s \partial_i g(s,x) \partial_j g(t,x+t-s)dx.
\end{aligned}
\end{equation}
Restriction of $\Lambda(z)$ defined in \eqref{Lambda} to the imaginary axis is 
$$
\Lambda(\i\lambda) = \eps + \widehat K_{\theta_0}(\lambda), \quad \lambda \in \Real\setminus \{0\},
$$
where $\widehat K_{\theta_0}$ is the Fourier transform \eqref{KK}.
Hence by extending the domain of $g(t,\cdot)$ to $\Real$ by zero and 
applying Plancherel's theorem we get 
\begin{align}
\label{Q3R}
\E\,\partial_i \rho_s(X) \partial_j  \rho_t(X) = &
\frac 1{2\pi  }\int_{- \infty}^{ \infty}  \partial_j \widehat g_t(\i\lambda) \overline{\partial_i \widehat g_s(\i\lambda)}\Lambda(\i\lambda)e^{\i(t-s)\lambda}d\lambda =\\
&
\nonumber
Q_{ij}(t-s) +  R^{(1)}_{ij}(s,t)+R^{(2)}_{ij}(s,t)+R^{(3)}_{ij}(s,t),
\end{align} 
where, due to  Lemma \ref{lem:LT}, 
\begin{equation}\label{Qij}
Q_{ij}(t-s) :=  
\frac 1{2\pi  }\int_{- \infty}^{ \infty}  \partial_j \frac{ 1}{ X(-\i\lambda)}  \partial_i \frac{ 1}{ X(\i\lambda)} 
\Lambda(\i\lambda) e^{\i(t-s)\lambda}d\lambda
\end{equation}
and 
\begin{align}
\label{R1}
R^{(1)}_{ij}(s,t) := & 
-\frac 1{2\pi  }\int_{-\infty}^{\infty}  \partial_j    \frac{ 1}{ X(-\i\lambda)}  \overline{\partial_i 
\widehat R_s(\i\lambda) }\Lambda(\i\lambda)e^{\i(t-s)\lambda}d\lambda,
\\
R^{(2)}_{ij}(s,t) := & 
-\frac 1{2\pi  }\int_{-\infty}^{\infty}  \partial_j \widehat R_t(\i\lambda) \overline{\partial_i \frac{ 1}{ X(-\i\lambda)}}\Lambda(\i\lambda)e^{\i(t-s)\lambda}d\lambda,
\\
R^{(3)}_{ij}(s,t) := & \phantom +\
\frac 1{2\pi  }\int_{-\infty}^{\infty}  \partial_j \widehat R_t(\i\lambda) \overline{\partial_i \widehat R_s(\i\lambda)}\Lambda(\i\lambda)e^{\i(t-s)\lambda}d\lambda.
\end{align}
The first bound in \eqref{rbnd} is derived in the following lemma.

\begin{lem}
There exists $C>0$ such that
$$
\big|Q_{ij}(t-s)\big|\le   C  \wedge |t-s|^{-1}\big|\log |t-s|\big|^3, \quad \forall s,t\in \Real_+.
$$
\end{lem}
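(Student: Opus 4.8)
The plan is to recast $Q_{ij}$ as the Fourier transform of an explicit function built from the canonical factor $X_c$, and then to analyse the singularity of that function at the origin. The starting point is the factorization \eqref{XXL}. Since $\partial_i\frac1{X(z)}=-\frac1{X(z)}\partial_i\log X(z)$, and $X(\pm\i\lambda)=X_c(\pm\i\lambda)$, identity \eqref{XXL} gives $\Lambda(\i\lambda)/(X(\i\lambda)X(-\i\lambda))=\eps$, so that \eqref{Qij} becomes
\begin{equation*}
Q_{ij}(t-s)=\frac{\eps}{2\pi}\int_{-\infty}^{\infty}\partial_j\log X_c(-\i\lambda)\,\partial_i\log X_c(\i\lambda)\,e^{\i(t-s)\lambda}\,d\lambda .
\end{equation*}
As $\alpha$, and hence $\partial_i\alpha$, is real valued, the Cauchy integral \eqref{Xc} satisfies the Schwarz reflection $X_c(\bar z)=\overline{X_c(z)}$, whence $\partial_j\log X_c(-\i\lambda)=\overline{\partial_j\log X_c(\i\lambda)}$; and differentiating \eqref{Xc} under the integral sign (justified by \eqref{aldifas}) gives $\partial_i\log X_c(\i\lambda)=G_i(\lambda)$ with
\begin{equation*}
G_i(\lambda):=\frac1\pi\int_0^\infty\frac{\partial_i\alpha(\tau)}{\tau-\i\lambda}\,d\tau .
\end{equation*}
Hence $Q_{ij}(t-s)=\frac{\eps}{2\pi}\int_{\Real}H_{ij}(\lambda)\,e^{\i(t-s)\lambda}\,d\lambda$ with $H_{ij}:=\overline{G_j}\,G_i$, so $Q_{ij}$ is, up to the constant $\eps/2\pi$, the inverse Fourier transform of $H_{ij}$, and everything reduces to estimating $H_{ij}$ and $H_{ij}'$.

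Next I would record pointwise bounds on $G_i$ and $G_i'$. By \eqref{aldifas}, $\partial_i\alpha\in L^1(\Real_+)$ and $\partial_i\alpha$ is bounded near $0$; splitting the Cauchy integral at $\tau=1$, using $|\tau-\i\lambda|\ge\max(\tau,|\lambda|)$ together with $\int_0^1(\tau^2+\lambda^2)^{-1/2}d\tau=O(|\log|\lambda||)$ as $\lambda\to0$, one obtains $|G_i(\lambda)|\le C(1+|\log|\lambda||)(1+|\lambda|)^{-1}$ for $\lambda\neq0$. Differentiating once more under the integral (permissible for $\lambda\neq0$) and arguing the same way, now with $(\tau-\i\lambda)^{-2}$ and $\int_0^1(\tau^2+\lambda^2)^{-1}d\tau=|\lambda|^{-1}\arctan|\lambda|^{-1}$, one gets $|G_i'(\lambda)|\le C|\lambda|^{-1}$ for $0<|\lambda|\le1$ and $|G_i'(\lambda)|=O(|\lambda|^{-2})$ for $|\lambda|\ge1$. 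Consequently $|H_{ij}(\lambda)|\le C(1+|\log|\lambda||)^2(1+|\lambda|)^{-2}$, so $H_{ij}\in L^1(\Real)$ and $H_{ij}(\lambda)\to0$ as $|\lambda|\to\infty$; moreover $H_{ij}$ is $C^1$ on $\Real\setminus\{0\}$ with $|H_{ij}'(\lambda)|\le C(1+|\log|\lambda||)|\lambda|^{-1}$ for $0<|\lambda|\le1$, and $H_{ij}'$ integrable on $\{|\lambda|\ge1\}$.

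The uniform bound follows at once: $|Q_{ij}(t-s)|\le\frac{\eps}{2\pi}\|H_{ij}\|_{L^1(\Real)}=:C$ for all $s,t\in\Real_+$. For the decay, fix $|t-s|\ge2$ and split the Fourier integral at $|\lambda|=|t-s|^{-1}$. On $\{|\lambda|\le|t-s|^{-1}\}$, since $\int_0^{x}(1+|\log\lambda|)^2\,d\lambda=O(x(\log x^{-1})^2)$ as $x\to0$, the contribution is $O(|t-s|^{-1}(\log|t-s|)^2)$. On each of the two half-lines making up $\{|\lambda|>|t-s|^{-1}\}$ integrate by parts: the boundary terms at $\pm|t-s|^{-1}$ are $O(|t-s|^{-1}|H_{ij}(|t-s|^{-1})|)=O(|t-s|^{-1}(\log|t-s|)^2)$ and those at $\pm\infty$ vanish, while the remaining term is bounded by
\begin{equation*}
\frac{1}{|t-s|}\int_{|\lambda|>|t-s|^{-1}}|H_{ij}'(\lambda)|\,d\lambda
\le\frac{C}{|t-s|}\left(\int_{|t-s|^{-1}}^{1}\frac{1+|\log\lambda|}{\lambda}\,d\lambda+C\right)
=O\!\left(\frac{(\log|t-s|)^2}{|t-s|}\right),
\end{equation*}
since $\int_{x}^{1}\lambda^{-1}(1+|\log\lambda|)\,d\lambda=\int_0^{\log x^{-1}}(1+u)\,du=O((\log x^{-1})^2)$. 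Adding the three contributions gives $|Q_{ij}(t-s)|=O(|t-s|^{-1}(\log|t-s|)^2)$ for $|t-s|\ge2$, which together with the uniform bound yields the asserted $C\wedge|t-s|^{-1}|\log|t-s||^3$. The main obstacle is the pointwise analysis of the Cauchy integrals $G_i,G_i'$ near $\lambda=0$ — locating the logarithmic singularity and its strength, uniformly over $\theta_0$ in compacts; granting that, the split-and-integrate-by-parts step is routine bookkeeping of logarithmic factors.
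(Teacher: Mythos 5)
Your strategy is the paper's: simplify the integrand in \eqref{Qij} via \eqref{XXL} to $\eps\,\partial_j\log X(-\i\lambda)\,\partial_i\log X(\i\lambda)$, bound this function and its derivative at the origin and at infinity, and deduce the Fourier decay --- the only difference being that you spell out the split-at-$|\lambda|=|t-s|^{-1}$ plus integrate-by-parts argument where the paper cites \cite{IL14}. That step is sound, and as a bonus yields the slightly sharper power $\log^{2}|t-s|$, which is of course consistent with the stated $\log^{3}$ bound.

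There is, however, a quantitative error in your treatment of large $\lambda$. You assert $\partial_i\alpha\in L^1(\Real_+)$; by \eqref{aldifas}, $\partial_i\alpha(\tau)=O(\tau^{1-2H}\log\tau)$ as $\tau\to\infty$ with $1-2H\in(-1,-\tfrac12)$, and this is \emph{not} integrable at infinity. Consequently the stated bounds $|G_i(\lambda)|\le C(1+|\log|\lambda||)(1+|\lambda|)^{-1}$ and $|G_i'(\lambda)|=O(|\lambda|^{-2})$ for $|\lambda|\ge1$ are too strong: the correct large-$\lambda$ behavior, consistent with \eqref{logXest}, is $G_i(\lambda)=O(|\lambda|^{1-2H_0}\log|\lambda|)$ and $G_i'(\lambda)=O(|\lambda|^{-2H_0}\log|\lambda|)$, both genuinely slower than your claimed rates since $H_0<1$. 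This does not derail the proof, because the only facts you actually use downstream are that $H_{ij}$ and $H_{ij}'$ are integrable on $\{|\lambda|\ge1\}$ and that $H_{ij}\to0$ at infinity; these follow from the corrected rates $H_{ij}=O(|\lambda|^{2-4H_0}\log^{2}|\lambda|)$ and $H_{ij}'=O(|\lambda|^{1-4H_0}\log^{2}|\lambda|)$ together with $H_0>\tfrac34$, which gives $2-4H_0<-1$ and $1-4H_0<-2$. With that correction, the split-and-integrate-by-parts step goes through unchanged and the lemma follows.
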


\begin{proof}
Let us estimate the growth of the integrand in \eqref{Qij},
$$
f(\lambda):=  \partial_j \frac{ 1}{ X(-\i\lambda)}  \partial_i \frac{ 1}{ X(\i\lambda)} 
\Lambda(\i\lambda),
$$
at the origin and at infinity. In view \eqref{aldifas}
and  \eqref{Xc},
\begin{equation}\label{logXest}
\partial_i \log X(\i\lambda) =   \frac 1 \pi \int_0^\infty \frac{\partial_\i\alpha(\tau)}{\tau-\i\lambda}d\tau
= 
\begin{cases}
O(\log |\lambda|^{-1}), & \lambda \to 0,\\
O(|\lambda|^{1-2H_0}\log |\lambda|), & \lambda \to \pm \infty.
\end{cases}
\end{equation}
Combining this estimate with \eqref{Xest} gives
\begin{equation}\label{1overX}
 \partial_i \frac{ 1}{ X(\i\lambda)}  = -\frac {\partial_i \log X(\i\lambda)} {X(\i\lambda)} 
= 
\begin{cases}
O(|\lambda|^{H_0-1/2}\log |\lambda|^{-1}), & \lambda\to 0, \\
O(|\lambda|^{1-2H_0}\log |\lambda|), & \lambda\to \pm \infty.
\end{cases}
\end{equation}
Consequently, in view of formula \eqref{Lambda},
$$
f(\lambda) = 
 \begin{cases}
O(\log^2 |\lambda|^{-1}), & \lambda \to 0, \\
O(|\lambda|^{2-4 H_0}\log^2|\lambda|), & \lambda \to\pm \infty,
\end{cases}
$$
so that $f\in L^1(\Real)$  and 
$$
|Q_{ij}(t-s)|\le  \|f\|_1.
$$
Similarly we can estimate the derivative $f'(\lambda)$ with respect to $\lambda$,
$$
f'(\lambda) = \begin{cases}
O(|\lambda|^{-1}\log^2 |\lambda|^{-1})& \lambda \to 0,\\
O(|\lambda|^{1-4 H_0}\log^2|\lambda|), & \lambda \to \pm \infty.
\end{cases}
$$
Standard bounds for the Fourier integral of such functions \cite{IL14} imply 
$$
|Q_{ij}(t-s)|\le \left|\int_{-\infty}^\infty f(\lambda) e^{\i(t-s) \lambda}d\lambda\right| \le c  |t-s|^{-1}\big|\log  |t-s|\big|^3,
$$
for some constant $c>0$. The claimed estimate follows by combining the two bounds. 
\end{proof}

The next lemma proves the second bound in \eqref{rbnd}.

\begin{lem} 
There exist constants $b\in (0,\frac 12)$, $C>0$ and $T_{\min}>0$ such that for all $s,t\ge T_{\min}$,
\begin{equation}\label{Rbnd}
\begin{aligned}
\big|R^{(1)}_{ij}(s,t)\big|& \le  C s^{ -1/2},  \\
\big|R^{(2)}_{ij}(s,t)\big|& \le  C t^{-1/2}, \\
\big|R^{(3)}_{ij}(s,t)\big|& \le  C (st)^{-b}.
\end{aligned}
\end{equation}
 
\end{lem}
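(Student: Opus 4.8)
Let me set up notation first. By \eqref{Rtz}, writing $\Psi^{(1)}_t(z):=p_t(-z)+q_t(-z)+1$ and $\Psi^{(2)}_t(z):=p_t(z)-q_t(z)$, we have $\widehat R_t(i\lambda)=\tfrac1{X(-i\lambda)}\Psi^{(1)}_t(i\lambda)+e^{-it\lambda}\tfrac1{X(i\lambda)}\Psi^{(2)}_t(i\lambda)$, so $\partial_j\widehat R_t(i\lambda)$ is a sum of four terms, each the product of a $\theta$-independent factor from $\big\{\tfrac1{X(\mp i\lambda)},\partial_j\tfrac1{X(\mp i\lambda)}\big\}$ (possibly times the unimodular $e^{-it\lambda}$) with one of $\Psi^{(1)}_t,\Psi^{(2)}_t,\partial_j\Psi^{(1)}_t,\partial_j\Psi^{(2)}_t$. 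The latter are fixed linear combinations of the functions $m_t$ in \eqref{pqbnd} evaluated at $\pm i\lambda$; since $p_t,q_t$ are real on $\Real_+$ and holomorphic off $\Real_-$, Schwarz reflection gives $|m_t(-i\lambda)|=|m_t(i\lambda)|$ for $\lambda\in\Real$, so Lemma \ref{lem:pqbnd} applies to all of them. By \eqref{Xest}, \eqref{1overX} and \eqref{aldifas} the factors $\tfrac1{X(\mp i\lambda)}$ and $\partial_j\tfrac1{X(\mp i\lambda)}$ are bounded on $\Real$ and are $O\big(|\lambda|^{H_0-\frac12}\log\tfrac e{|\lambda|}\big)$ as $\lambda\to0$, while $\Lambda(i\lambda)=\eps+\widehat K_{\theta_0}(\lambda)=\eps+\sigma^2a_H|\lambda|^{1-2H_0}$ by \eqref{Lambda}, \eqref{KK}. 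In particular, applying Lemma \ref{lem:pqbnd} with $r=0$ records $\big\|\partial_j\widehat R_t\big\|_{L^2(\Real)}^2\le Ct^{-1}$ for all $t\ge T_{\min}$, a fact used repeatedly below.

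For $R^{(1)}_{ij}$ the plan is to apply Cauchy--Schwarz in \eqref{R1} after grouping the $\theta$-independent factors: since $\Lambda(i\lambda)$ is real and positive,
$$
\big|R^{(1)}_{ij}(s,t)\big|\le\frac1{2\pi}\Big(\int_\Real\big|\partial_j\tfrac1{X(-i\lambda)}\big|^2\Lambda(i\lambda)^2\,d\lambda\Big)^{1/2}\big\|\partial_i\widehat R_s\big\|_{L^2(\Real)}.
$$
The first factor is a finite constant: by \eqref{1overX} and \eqref{Lambda} the integrand is $O\big(|\lambda|^{1-2H_0}\log^2|\lambda|^{-1}\big)$ as $\lambda\to0$ (integrable, since $1-2H_0>-1$) and $O\big(|\lambda|^{2-4H_0}\log^2|\lambda|\big)$ as $\lambda\to\pm\infty$ (integrable, since $2-4H_0<-1$ because $H_0>\tfrac34$). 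Together with $\|\partial_i\widehat R_s\|_{L^2}^2\le Cs^{-1}$ this gives $|R^{(1)}_{ij}(s,t)|\le Cs^{-1/2}$. The estimate for $R^{(2)}_{ij}$ is completely symmetric — pair $\partial_j\widehat R_t$ with $\partial_i\tfrac1{X(-i\lambda)}\Lambda(i\lambda)$ — and yields $|R^{(2)}_{ij}(s,t)|\le Ct^{-1/2}$.

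For $R^{(3)}_{ij}$ I would split the defining integral at $|\lambda|=1$. On $|\lambda|>1$ one has $\Lambda(i\lambda)\le\eps+\sigma^2a_H$, so this part is at most $C\|\partial_j\widehat R_t\|_{L^2}\|\partial_i\widehat R_s\|_{L^2}\le C(st)^{-1/2}$. On $|\lambda|\le1$ the weight has the singularity $\Lambda(i\lambda)\le C|\lambda|^{1-2H_0}$, which cannot be handed directly to Lemma \ref{lem:pqbnd} because $2H_0-1>\tfrac12\ge r_{\max}$; this is the crux. The resolution is that in this range each of $\partial_j\widehat R_t$ and $\partial_i\widehat R_s$ carries a factor $\tfrac1{X(\mp i\lambda)}=O\big(|\lambda|^{H_0-\frac12}\log\tfrac e{|\lambda|}\big)$, so that for $|\lambda|\le1$
$$
\big|\partial_j\widehat R_t(i\lambda)\big|^2\Lambda(i\lambda)\le C\,|\lambda|^{2H_0-1}\log^2\tfrac e{|\lambda|}\cdot|\lambda|^{1-2H_0}\Big(|\Psi^{(1)}_t(i\lambda)|^2+|\partial_j\Psi^{(1)}_t(i\lambda)|^2+|\Psi^{(2)}_t(i\lambda)|^2+|\partial_j\Psi^{(2)}_t(i\lambda)|^2\Big),
$$
and the powers of $|\lambda|$ cancel. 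Using $\log^2\tfrac e{|\lambda|}\le C_r|\lambda|^{-r}$ on $(0,1]$ for an $r\in(0,r_{\max}]$ and invoking Lemma \ref{lem:pqbnd} for each $m_t\in\{\Psi^{(1)}_t,\partial_j\Psi^{(1)}_t,\Psi^{(2)}_t,\partial_j\Psi^{(2)}_t\}$ gives $\int_{|\lambda|\le1}|\partial_j\widehat R_t(i\lambda)|^2\Lambda(i\lambda)\,d\lambda\le Ct^{r-1}$, and likewise in $s$. Cauchy--Schwarz with the split $\Lambda=\Lambda^{1/2}\Lambda^{1/2}$ then bounds the $|\lambda|\le1$ contribution by $C(t^{r-1})^{1/2}(s^{r-1})^{1/2}=C(st)^{(r-1)/2}$. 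Setting $b:=(1-r)/2$ (so $b\in(0,\tfrac12)$ since $0<r<1$) and enlarging $T_{\min}$ so that $st\ge1$, the two ranges combine to $|R^{(3)}_{ij}(s,t)|\le C(st)^{-b}$. The only genuine difficulty is this last reconciliation of the origin singularity of the spectral weight $\Lambda$ with the weighted $L^2$ estimates of Lemma \ref{lem:pqbnd}; everything else is bookkeeping with the known asymptotics of $X_c$ and $\alpha$.
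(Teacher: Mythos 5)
Your proof is correct and takes essentially the same route as the paper's: express $\partial_j\widehat R_t$ via \eqref{Rtz} in terms of $p_t$, $q_t$ and $1/X$, apply Cauchy--Schwarz, and let Lemma \ref{lem:pqbnd} (with $r=0$ for $R^{(1)},R^{(2)}$ and a small $r>0$ for $R^{(3)}$) supply the decay in $s$ and $t$. The only cosmetic difference is in how the spectral weight is absorbed: you split at $|\lambda|=1$ and match the origin/infinity asymptotics of $1/X$ against $\Lambda$ on each piece, whereas the paper invokes the exact identity $\Lambda(\i\lambda)=\eps\,|X(\i\lambda)|^{2}$ from \eqref{XXL}, which removes $\Lambda$ globally and makes the bookkeeping slightly tidier.
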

 
\begin{proof}
The expression in \eqref{Rtz} satisfies the bound 
\begin{equation}\label{bbb}
\begin{aligned}
&
\big|X(\i\lambda) \partial_i \widehat R_t(\i\lambda)\big|  \le \, 
\Big| \big(p_t(-\i\lambda)+q_t(-\i\lambda)+1\big) \partial_i \log X(\i\lambda) \Big| + 
   \\
& 
  \Big| \big(p_t(\i\lambda)-q_t(\i\lambda)\big) \partial_i \log X(\i\lambda)\Big|
+2\big|\partial_ip_t(\i\lambda)\big| + 2\big| \partial_iq_t(\i\lambda)\big|,
\end{aligned}
\end{equation}
where we used the conjugacy $\overline {X(\i\lambda)}=X(-\i\lambda)$.
Hence the expression for $R^{(1)}_{ij}(s,t)$ in \eqref{R1} satisfies
\begin{align}\nonumber
\big|R^{(1)}_{ij}(s,t)\big|  \le &
 \int_{-\infty}^{\infty} \Big| \partial_j    \frac{ 1}{ X(-\i\lambda)}  \overline{\partial_i 
\widehat R_s(\i\lambda) }\Lambda(\i\lambda)\Big|d\lambda = \\
\nonumber &
  \int_{-\infty}^{\infty}      \Big|\partial_j\log  X(\i\lambda)\Big|
   \Big|X(\i\lambda)\partial_i \widehat R_s(\i\lambda) \Big|
   \left| \frac{\Lambda(\i\lambda)}{X(-\i\lambda)X(\i\lambda)}\right|d\lambda \le  \\
& \label{R1bnd}
 2 \int_{-\infty}^{\infty}      f_1(\lambda)
   \Big( \big|\partial_ip_s(\i\lambda)\big| +  \big| \partial_iq_s(\i\lambda)\big| \Big)
   d\lambda +
\\
\nonumber &   2\int_{-\infty}^{\infty} f_2(\lambda)
   \Big( \big|p_s(\i\lambda)+\tfrac 1 2\big|+\big|q_s(\i\lambda)+\tfrac 1 2\big| \Big)
   d\lambda,
\end{align}
where we used \eqref{bbb} and defined 
\begin{align*}
f_1(\lambda) :=  &
\eps \Big|\partial_j\log  X(\i\lambda) \Big|, \\
f_2(\lambda) := &  \eps \Big|\partial_j\log  X(\i\lambda)\partial_i\log  X(\i\lambda)\Big|.  
\end{align*} 
Due to the estimate \eqref{logXest}, 
$$
f_1( \lambda) = \begin{cases}
O\big(\log   |\lambda|^{-1}  \big),& \lambda\to 0,\\
O\big(|\lambda|^{1-2 H_0 }\log |\lambda|\big), &\lambda\to \pm \infty,
\end{cases}
$$
and 
$$
f_2(\lambda) = \begin{cases}
O\big( \log^2  |\lambda|^{-1}  \big),& \lambda\to 0,\\
O\big(|\lambda|^{2-4  H_0}\log^2 |\lambda|\big), &\lambda\to \pm \infty.
\end{cases}
$$
Thus  $f_1, f_2\in L^2(\Real)$. By estimate \eqref{pqbnd} with $r=0$, 
$$
\int_{-\infty}^{\infty}      f_1(\lambda)\big|\partial_ip_s(\i\lambda)\big|d\lambda 
\le \big\|f_1\big\|\,\big\|\partial_i p_s\big\| \le C s^{-1/2}.
$$
The same estimate is valid for the rest of the integrals in \eqref{R1bnd} and the first bound in \eqref{Rbnd} follows.
The second bound is proved similarly. 
To prove the third bound, note that 
\begin{align}\label{twof}
\big|R^{(3)}_{ij}(s,t)\big| \le &  
 \int_{-\infty}^{\infty} \Big| X(\i\lambda)\partial_j \widehat R_t(\i\lambda) X(\i\lambda) \partial_i \widehat R_s(\i\lambda)\Big|d\lambda\le \\
&
\nonumber
 \left(\int_{-\infty}^{\infty} \Big| X(\i\lambda)\partial_j \widehat R_t(\i\lambda)\Big|^2  d\lambda\right)^{1/2}
 \left(\int_{-\infty}^{\infty} \Big| X(\i\lambda)\partial_i \widehat R_s(\i\lambda)\Big|^2  d\lambda\right)^{1/2}.
\end{align}
In view of \eqref{bbb},
\begin{equation}\label{intbnd}
\begin{aligned}
&
\int_{-\infty}^{\infty} \Big| X(\i\lambda)\partial_j \widehat R_t(\i\lambda)\Big|^2  d\lambda  \le \\
&
4\int_{-\infty}^{\infty} \Big|
\big(p_t(-\i\lambda)+q_t(-\i\lambda)+1\big) \partial_i \log X(\i\lambda) \Big|^2   d\lambda
+ \\
&
4\int_{-\infty}^{\infty} \Big|
\big(p_t(\i\lambda)-q_t(\i\lambda)\big) \partial_i \log X(\i\lambda) \Big|^2   d\lambda
+ \\ 
&
8\int_{-\infty}^{\infty} \Big|\partial_ip_t(\i\lambda)\Big|^2   d\lambda
+
8\int_{-\infty}^{\infty} \Big|\partial_iq_t(\i\lambda)\Big|^2   d\lambda.
 \end{aligned}
\end{equation} 
Due to \eqref{logXest},
$
\big|\partial_i \log X(\i\lambda,\eta) \big|^2   \le C |\lambda|^{-r}
$
for any $r\in (0,1)$. Hence, with $r>0$ small enough, Lemma \ref{lem:pqbnd} guarantees that 
all the integrals in \eqref{intbnd} are bounded by $C t^{r-1}$. Applying the same argument to the second term in 
\eqref{twof} we conclude that 
$$
\big|R^{(3)}_{ij}(s,t)\big|\le C s^{r/2-1/2}t^{r/2-1/2}.
$$ 
This verifies the last bound in \eqref{Rbnd} with $b:=1/2-r/2\in (0,1/2)$.
\end{proof}
 
Finally, the next lemma verifies formula \eqref{diag}. 

\begin{lem} 
\begin{align*}
Q_{ij}(0)=
 \frac \eps {4\pi} \int_{-\infty}^\infty \partial_i \log\big(\eps+\widehat K_{\theta_0}(\lambda)\big)
\partial_j \log\big(\eps+\widehat K_{\theta_0}(\lambda)\big) d\lambda.
\end{align*}
\end{lem}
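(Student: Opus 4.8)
The plan is to reduce the formula \eqref{Qij} at $t=s$ to a Whittle‑type integral using only the algebraic identities already at hand. Setting $X=X_c$ and inserting $\Lambda(\i\lambda)=\eps\,X(\i\lambda)X(-\i\lambda)$ from \eqref{XXL} together with $\partial_i\tfrac1{X(\i\lambda)}=-\partial_i\log X(\i\lambda)/X(\i\lambda)$, the four occurrences of $X(\pm\i\lambda)$ cancel pairwise and
\begin{equation*}
Q_{ij}(0)=\frac{\eps}{2\pi}\int_{-\infty}^{\infty}\partial_i\log X(\i\lambda)\,\partial_j\log X(-\i\lambda)\,d\lambda .
\end{equation*}
Because $\alpha$ is real, the Cauchy representation \eqref{Xc} gives $X(-\i\lambda)=\overline{X(\i\lambda)}$, hence $\partial_j\log X(-\i\lambda)=\overline{\partial_j\log X(\i\lambda)}$; writing $\partial_i\log X(\i\lambda)=a_i(\lambda)+\i\,b_i(\lambda)$ with $a_i,b_i$ real, the same representation shows $a_i$ is even and $b_i$ is odd in $\lambda$. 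The integrand then equals $a_ia_j+b_ib_j+\i(b_ia_j-a_ib_j)$, its imaginary part is odd and integrates to zero, so $Q_{ij}(0)=\frac{\eps}{2\pi}\int_{\Real}(a_ia_j+b_ib_j)\,d\lambda$.

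The heart of the argument is the identity $\int_{\Real}a_ia_j\,d\lambda=\int_{\Real}b_ib_j\,d\lambda$, which I would obtain by showing $\int_{\Real}\partial_i\log X_c(\i\lambda)\,\partial_j\log X_c(\i\lambda)\,d\lambda=0$ and taking real parts. From \eqref{Xc}, $\partial_i\log X_c(z)=\tfrac1\pi\int_0^\infty\frac{\partial_i\alpha(\tau)}{\tau-z}\,d\tau$ is holomorphic on $\mathbb C\setminus\Real_+$, in particular on the open left half‑plane, and the bound \eqref{aldifas} on $\partial_i\alpha$ (which underlies \eqref{logXest}) extends from the imaginary axis to all of $\{\Re z\le 0\}$, giving $\partial_i\log X_c(z)=O(|z|^{1-2H_0}\log|z|)$ as $z\to\infty$ and $O(\log|z|^{-1})$ as $z\to 0$ there. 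Closing the contour along the imaginary axis by a large semicircle in the left half‑plane together with a small indentation around the origin, the large arc contributes $O(R^{3-4H_0}(\log R)^2)\to0$ (using $3-4H_0<0$ since $H_0>\tfrac34$) and the small arc $O(\delta(\log\delta)^2)\to0$, so Cauchy's theorem delivers the vanishing. This contour shift — verifying the growth/decay estimates uniformly in the closed left half‑plane away from $0$ and taming the logarithmic singularity at the origin by the indentation — is the only non‑routine point; everything else is bookkeeping with identities already established.

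Finally I would identify $a_i$: from \eqref{XXL}, $2\,\Re\log X_c(\i\lambda)=\log\!\big(X_c(\i\lambda)X_c(-\i\lambda)\big)=\log\big((\eps+\widehat K_{\theta_0}(\lambda))/\eps\big)$, and since $\partial_i$ commutes with $\Re$ this yields $a_i(\lambda)=\tfrac12\,\partial_i\log\big(\eps+\widehat K_{\theta_0}(\lambda)\big)$. Substituting $\int b_ib_j=\int a_ia_j$ and this expression into the formula above,
\begin{equation*}
Q_{ij}(0)=\frac{\eps}{\pi}\int_{\Real}a_ia_j\,d\lambda=\frac{\eps}{4\pi}\int_{-\infty}^{\infty}\partial_i\log\big(\eps+\widehat K_{\theta_0}(\lambda)\big)\,\partial_j\log\big(\eps+\widehat K_{\theta_0}(\lambda)\big)\,d\lambda,
\end{equation*}
which is the asserted formula for $Q_{ij}(0)$, and hence, by \eqref{Ieps}, the identity $Q(0)=\eps\, I(\theta_0,\eps)$.
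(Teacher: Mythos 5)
Your proposal is correct and reaches the same formula, and it identifies the same key vanishing identity $\int_{\Real}\partial_i\log X_c(\i\lambda)\,\partial_j\log X_c(\i\lambda)\,d\lambda=0$ that drives the paper's argument, but you prove it and exploit it by a genuinely different route. For the vanishing, the paper inserts the Cauchy representation \eqref{Xc}, applies Fubini to swap the $\tau,r$ integrals with the $\lambda$ integral, and observes that $\int_{\Real}\frac{d\lambda}{(\tau-\i\lambda)(r-\i\lambda)}=0$ for $\tau,r>0$ by an elementary contour argument with $O(1/\lambda^2)$ decay — this sidesteps any growth estimates away from the imaginary axis. You instead shift the full integrand $\partial_i\log X_c\cdot\partial_j\log X_c$ to the left half-plane in one stroke, indenting at the origin and checking the arcs; this is valid and self-contained, but it requires you to verify that the asymptotic bounds \eqref{logXest} persist away from the imaginary axis, which you handle correctly via $|\tau-z|\ge\max(|z|,\tau)$ for $\Re z\le 0$, $\tau>0$. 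A minor structural difference: after the common reduction $Q_{ij}(0)=\frac{\eps}{2\pi}\int\partial_i\log X(\i\lambda)\,\partial_j\log X(-\i\lambda)\,d\lambda$, the paper finishes by expanding $\log\Lambda(\i\lambda)=\log\eps+\log X(\i\lambda)+\log X(-\i\lambda)$ and using the symmetry $\lambda\mapsto-\lambda$, whereas you proceed via a real/imaginary decomposition $a_i+\i b_i$ and the identification $a_i=\tfrac12\partial_i\log(\eps+\widehat K_{\theta_0})$. Both are precise ways of encoding the conjugation symmetry $X(-\i\lambda)=\overline{X(\i\lambda)}$, so the two arguments are equivalent in content; the paper's is a bit shorter, while yours makes the cancellation more visually transparent.
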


\begin{proof}
In view of \eqref{XXL} the expression in \eqref{Qij} can be written as
\begin{align*}
Q_{ij}(0) = &
\frac 1{2\pi  }\int_{- \infty}^{ \infty}  \partial_j \frac{ 1}{ X(-\i\lambda)}  \partial_i \frac{ 1}{ X(\i\lambda)} 
\Lambda(\i\lambda)  d\lambda =\\
&
\frac \eps {2\pi  }\int_{- \infty}^{ \infty}  \partial_i\log X(\i\lambda) \partial_j \log X(-\i\lambda)  
   d\lambda.
\end{align*}
On the other hand,  
\begin{align*}
&
\frac \eps {4\pi} \int_{-\infty}^\infty \partial_i\log \Lambda(\i\lambda) \partial_j \log \Lambda(\i\lambda)
 d\lambda  =\\
&
\frac \eps {2\pi} \int_{-\infty}^\infty \partial_i\log X(\i\lambda) \partial_j \log X(-\i\lambda)
 d\lambda
+
\frac \eps {2\pi} \int_{-\infty}^\infty \partial_i\log X( \i\lambda) \partial_j \log X( \i\lambda)
 d\lambda.
\end{align*}
Hence the formula in question  is true if we show that the latter integral vanishes. 
In view of \eqref{Xc},
\begin{align*}
&
\int_{-\infty}^\infty \partial_i\log X( \i\lambda) \partial_j \log X( \i\lambda) 
 d\lambda =\\
&
 \frac 1 {\pi^2} \int_0^\infty \int_0^\infty \partial_\i\alpha(\tau)\partial_j\alpha(r)   
 \left(\int_{-\infty}^\infty \frac{1}{\tau-\i\lambda} \frac{1}{r-\i\lambda}   d\lambda  \right)d\tau dr =0.
\end{align*}
The last equality holds since for any $r, \tau \in \Real_+$ the integral in the brackets vanishes,
as can be readily checked by the standard contour integration. 
\end{proof}

\subsection{Proof of Lemma \ref{lem2}}
This lemma involves only one dimensional distributions of the process $\rho_t(X,\theta)$ and its partial derivatives. 
On the other hand, unlike in Lemma \ref{lem1}, $\theta$ may be distinct from $\theta_0$, the true value of the parameter, 
which determines the distribution of the sample $X^T$. In this subsection, we will stress this distinction by adding 
the relevant parameter value to the notations.

We have to show that for all sufficiently small $\delta>0$ there exist constants $C>0$ and  $T_{\min}>0$ 
such that 
$$
\sup_{\|\theta-\theta_0\|\le \delta}\E \big(\partial_i\partial_j \rho_t(X,\theta)\big)^2 \le C, \quad \forall t\ge T_{\min}.
$$
Similarly to \eqref{specr}-\eqref{Q3R}
\begin{align*}
&
\E \big(\partial_i\partial_j \rho_t(X,\theta)\big)^2 
= 
\frac 1{2\pi  }\int_{- \infty}^{ \infty}  \Big|\partial_i\partial_j \widehat g_t(\i\lambda;\theta)\Big|^2 \Lambda(\i\lambda;\theta_0) d\lambda  
\le \\
&
\int_{- \infty}^{ \infty}  \Big|\partial_i\partial_j \frac 1 {X(\i\lambda;\theta)}\Big|^2 \Lambda(\i\lambda;\theta_0) d\lambda  
+
\int_{- \infty}^{ \infty}  \Big|\partial_i\partial_j \widehat R_t(\i\lambda;\theta)\Big|^2 \Lambda(\i\lambda;\theta_0) d\lambda,
\end{align*}
where the bound holds due to decomposition \eqref{hatht}. It remains to prove that both terms in the right hand side are bounded
functions of $t\in [T_{\min},\infty)$ for some $T_{\min}>0$, uniformly over $\theta$ in a $\delta$-vicinity of $\theta_0$.
This is done in the following two lemmas. 

\begin{lem} 
For all sufficiently small $\delta>0$, there exists a constant $C>0$ such that 
$$
\sup_{\|\theta-\theta_0\|\le \delta}\int_{- \infty}^{ \infty}  \Big|\partial_i\partial_j \frac 1 {X(\i\lambda;\theta)}\Big|^2 \Lambda(\i\lambda;\theta_0) d\lambda \le C.
$$
\end{lem}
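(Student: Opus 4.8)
The plan is to reduce the integral to pointwise estimates on the logarithmic derivatives of $X(\cdot;\theta)$ and to combine them with the algebraic behaviour of $\Lambda$ on the imaginary axis. First I would use $\tfrac 1 X = e^{-\log X}$ to write
\[
\partial_i\partial_j \frac 1 {X(\i\lambda;\theta)}
= \frac{1}{X(\i\lambda;\theta)}\Big(\partial_i\log X(\i\lambda;\theta)\,\partial_j\log X(\i\lambda;\theta)-\partial_i\partial_j\log X(\i\lambda;\theta)\Big),
\]
and then invoke the identity \eqref{XXL} together with the conjugacy $\overline{X(\i\lambda;\theta)}=X(-\i\lambda;\theta)$, which give $|X(\i\lambda;\theta)|^{2}=\eps^{-1}\Lambda(\i\lambda;\theta)$. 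Hence the integrand equals
\[
\eps\,\frac{\Lambda(\i\lambda;\theta_0)}{\Lambda(\i\lambda;\theta)}\,
\Big|\partial_i\log X(\i\lambda;\theta)\,\partial_j\log X(\i\lambda;\theta)-\partial_i\partial_j\log X(\i\lambda;\theta)\Big|^{2},
\]
and it suffices to dominate this by a fixed integrable function of $\lambda$, uniformly over $\|\theta-\theta_0\|\le\delta$.

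Next I would control the ratio $\Lambda(\i\lambda;\theta_0)/\Lambda(\i\lambda;\theta)$. Since $\Lambda(\i\lambda;\theta)=\eps+\sigma^{2}a_H|\lambda|^{1-2H}$ is real and positive with $1-2H<0$, this ratio tends to $1$ as $\lambda\to\pm\infty$ and is comparable to a constant multiple of $|\lambda|^{2H-2H_0}$ as $\lambda\to0$; for $\delta$ small it is therefore bounded on all of $\Real$ and is $O(|\lambda|^{-2\delta})$ near the origin, with the implied constants uniform in $\theta$.

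The core of the argument is the bound on the bracketed factor, obtained by differentiating the Cauchy-type representation \eqref{Xc} as in the derivation of \eqref{logXest}. From \eqref{alpha} one checks, alongside \eqref{aldifas}, that $\partial_i\partial_j\alpha(\tau;\theta)=O(1)$ as $\tau\to0$ and $O(\tau^{1-2H}(\log\tau)^{2})$ as $\tau\to\infty$, all uniformly on a small ball around $\theta_0$; inserting these and the first-order estimates into the Cauchy integrals yields
\[
\partial_i\log X(\i\lambda;\theta),\ \ \partial_i\partial_j\log X(\i\lambda;\theta)
=\begin{cases}
O\big(\log|\lambda|^{-1}\big), & \lambda\to0,\\[2pt]
O\big(|\lambda|^{1-2H}(\log|\lambda|)^{2}\big), & \lambda\to\pm\infty,
\end{cases}
\]
uniformly over $\|\theta-\theta_0\|\le\delta$. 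Consequently the bracketed factor is $O\big((\log|\lambda|^{-1})^{4}\big)$ near the origin and $O\big(|\lambda|^{2-4H}(\log|\lambda|)^{4}\big)$ at infinity (the $\partial_i\partial_j\log X$ contribution dominating there since $1-2H>2-4H$). Combining the three ingredients, the integrand is $O\big(|\lambda|^{-2\delta}(\log|\lambda|^{-1})^{4}\big)$ near $0$, which is integrable once $\delta<\tfrac12$, and $O\big(|\lambda|^{2-4H}(\log|\lambda|)^{4}\big)$ near $\infty$, which is integrable because $2-4H<-1$ when $H>\tfrac34$ (and $H$ stays in a compact subinterval of $(\tfrac34,1)$ for $\delta$ small); on compact sets away from the origin the integrand is continuous and uniformly bounded in $\theta$. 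Taking the supremum over the $\delta$-ball and integrating gives the claim.

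The main obstacle I anticipate is the uniform bookkeeping for the asymptotics of $\alpha$ and of its first and second $\theta$-derivatives near $0$ and near $\infty$, and in particular making sure the implied constants can be chosen uniformly over the $\delta$-ball while keeping the two borderline powers — $|\lambda|^{-2\delta}$ at the origin and $|\lambda|^{2-4H}$ at infinity — on the integrable side; this is precisely what forces $\delta$ to be taken small and uses $H_0>\tfrac34$.
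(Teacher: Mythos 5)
Your proposal is correct and follows essentially the same route as the paper: both decompose $\partial_i\partial_j(1/X)$ via the product rule, derive the same growth estimates for $\partial_i\partial_j\alpha$ and hence for $\partial_i\partial_j\log X(\i\lambda;\theta)$ from the Cauchy representation \eqref{Xc}, and arrive at the identical pointwise bound $O(|\lambda|^{-2\delta}\log^4|\lambda|^{-1})$ near $0$ and $O(|\lambda|^{2-4H}\log^4|\lambda|)$ at infinity, using $H_0>\tfrac34$ and $\delta$ small for integrability. Your repackaging of $|X(\i\lambda;\theta)|^{-2}\Lambda(\i\lambda;\theta_0)$ as $\eps\,\Lambda(\i\lambda;\theta_0)/\Lambda(\i\lambda;\theta)$ via \eqref{XXL} is a harmless cosmetic variant of the paper's step, which instead bounds $1/X(\i\lambda;\theta)$ directly through \eqref{Xest}.
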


\begin{proof}
The second order derivatives  of $\alpha(\tau,\theta)$ defined in \eqref{alpha} are continuous in $\tau$ and satisfy, cf. \eqref{aldifas}
$$
\partial_i \partial_j \alpha(\tau,\theta)= \begin{cases}
O(1), & \tau \to 0, \\
O(\tau^{1-2H}\log^2\tau), & \tau \to \infty.  
\end{cases}
$$
Consequently 
$$
\partial_i \partial_j \log X(\i\lambda,\theta)=
  \frac 1 \pi \int_0^\infty \frac{\partial_i\partial_j \alpha(\tau,\theta)}{\tau-\i\lambda}d\tau
= 
\begin{cases}
O(\log  |\lambda|^{-1}), & \lambda \to 0,\\
O(|\lambda|^{1-2H}\log^2 |\lambda|), & \lambda \to \pm \infty,
\end{cases}
$$
and in view of \eqref{Xest} and \eqref{logXest},
\begin{align*} 
\partial_i\partial_j \frac 1 {X(\i\lambda;\theta)} =\, &
\frac 1{X(\i\lambda;\theta)}
\Big(
\partial_i \log X(\i\lambda;\theta) \partial_j \log X(\i\lambda;\theta)-\partial_i\partial_j \log X(\i\lambda;\theta) 
\Big) =\\
&
\begin{cases}
|\lambda|^{H-1/2}\log^2|\lambda|^{-1}, &\lambda \to 0, \\
|\lambda|^{1-2H}\log^2|\lambda|,  & \lambda\to \infty.
\end{cases}
\end{align*}
This estimate and \eqref{Lambda} imply   
\begin{equation}\label{ppX}
\Big|\partial_i\partial_j \frac 1 {X(\i\lambda;\theta)}\Big|^2 \Lambda(\i\lambda;\theta_0) = 
\begin{cases}
|\lambda|^{-2\delta}\log^4|\lambda|^{-1}, &\lambda \to 0, \\
|\lambda|^{2-4H}\log^4|\lambda|,  & \lambda\to \infty.
\end{cases}
\end{equation}
This function is integrable on $\Real$ for all sufficiently small $\delta>0$ which verifies the claim.
\end{proof}

\begin{lem}
For all sufficiently small $\delta>0$, there exist positive constants $C$, $T_{\min}$ and $c$ such that 
$$
\sup_{\|\theta-\theta_0\|\le \delta}\int_{- \infty}^{ \infty}  \Big|\partial_i\partial_j \widehat R_t(\i\lambda;\theta)\Big|^2 \Lambda(\i\lambda;\theta_0) d\lambda \le C t^{-c}, 
\quad \forall t\ge T_{\min}.
$$
\end{lem}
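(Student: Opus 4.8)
The plan is to reduce the bound on $\partial_i\partial_j \widehat R_t$ to the $L^2$-estimates on $p_t$, $q_t$ and their derivatives provided by Lemma \ref{lem:pqbnd}, in the same spirit as the proof of the bounds \eqref{Rbnd} for $R^{(3)}_{ij}(s,t)$, but now tracking two parameters $\theta$ and $\theta_0$. First I would differentiate the expression \eqref{Rtz} for $\widehat R_t(z;\theta)$ twice with respect to $\theta$ and, after multiplying by $X(\i\lambda;\theta)$, bound $\big|X(\i\lambda;\theta)\,\partial_i\partial_j \widehat R_t(\i\lambda;\theta)\big|$ by a finite sum of products of the form (derivative of $\log X(\i\lambda;\theta)$ of order $0$, $1$ or $2$) times (one of the functions in the list \eqref{pqbnd}, evaluated at $\pm\i\lambda$), plus the analogous terms carrying the factor $e^{-tz}$. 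This is the exact analogue of \eqref{bbb}; the product rule produces finitely many such terms and each logarithmic-derivative factor obeys, by \eqref{logXest} and the second-order estimate from the previous lemma, a bound of the type $O(|\lambda|^{-r/2})$ near the origin and $O(|\lambda|^{1-2H}\log^2|\lambda|)$ at infinity, hence $|\partial^{(\le 2)}\log X(\i\lambda;\theta)|^2 \le C|\lambda|^{-r}$ for any small $r>0$, uniformly over $\theta$ in a small ball.

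Next I would insert this pointwise bound into the integral $\int_{-\infty}^\infty |\partial_i\partial_j \widehat R_t(\i\lambda;\theta)|^2 \Lambda(\i\lambda;\theta_0)\,d\lambda$. Using \eqref{XXL} one has $\Lambda(\i\lambda;\theta_0) = \eps\, X(\i\lambda;\theta_0)X(-\i\lambda;\theta_0)$, and the crucial point is to absorb the ratio $X(\i\lambda;\theta_0)X(-\i\lambda;\theta_0)/\big(X(\i\lambda;\theta)X(-\i\lambda;\theta)\big)$ into a harmless power of $|\lambda|$: by \eqref{Xest} both numerator and denominator behave like $|\lambda|^{1/2-H}$ near $0$ and like $1$ at infinity, so if $|\theta-\theta_0|\le\delta$ the ratio is $O(|\lambda|^{-2\delta})$ near the origin and $O(1)$ at infinity, uniformly. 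After this reduction the integral is dominated by a finite sum of integrals $\int_{-\infty}^\infty |\lambda|^{-r-2\delta}\,|m_t(\pm\i\lambda)|^2\,d\lambda$ where $m_t$ ranges over the list in \eqref{pqbnd}; each of these, for $r+2\delta$ small enough, is $\le C t^{r+2\delta-1}$ by Lemma \ref{lem:pqbnd}, which is $O(t^{-c})$ with $c:=1-r-2\delta\in(0,1)$. The terms carrying the extra factor $e^{-tz}$ with $z=\i\lambda$ have modulus one on the imaginary axis, so they contribute integrals of exactly the same shape and are handled identically; in fact they can be bounded using $X(\i\lambda;\theta)^{-1}$ instead of $X(-\i\lambda;\theta)^{-1}$ and the same estimates \eqref{Xest}, \eqref{pqbnd} apply verbatim.

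The main obstacle is making the parameter dependence genuinely uniform: the estimates in Lemma \ref{lem:pqbnd} are stated for a fixed closed ball $B\subset\Theta$, so I must first fix $\delta>0$ small enough that the closed $\delta$-ball around $\theta_0$ lies in $\Theta$ and apply the lemma with this $B$, obtaining $r_{\max}$, $T_{\min}$ and $C$ that are then frozen; simultaneously $\delta$ must be small enough that $2-4H<-1$ fails to matter — i.e. that $|\lambda|^{2-4H}\log^4|\lambda|$ and $|\lambda|^{-r-2\delta}\log^4|\lambda|^{-1}$ are both integrable near $\infty$ and $0$ respectively, which forces $H_0>3/4$ (true by hypothesis) and $r+2\delta<1$. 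One also has to check that the operator $A_t$ and the auxiliary equations \eqref{pqeq} depend smoothly enough on $\theta$ that $\partial_i\partial_j p_t$, $\partial_i\partial_j q_t$ are legitimately the functions appearing in \eqref{pqbnd}; this was already arranged in the proof of Lemma \ref{lem:pqbnd}, where these second-order derivatives were included in the list, so here it suffices to quote it. Modulo these uniformity bookkeeping points, the argument is a routine repetition of the chain \eqref{bbb}--\eqref{twof}--\eqref{intbnd}, and it yields the claimed bound $\sup_{\|\theta-\theta_0\|\le\delta}\int_{-\infty}^\infty |\partial_i\partial_j \widehat R_t(\i\lambda;\theta)|^2 \Lambda(\i\lambda;\theta_0)\,d\lambda \le C t^{-c}$ for all $t\ge T_{\min}$.
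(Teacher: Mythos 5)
Your proposal is correct and follows essentially the same route as the paper. After using the product rule on \eqref{Rtz}, the paper organizes the resulting terms into the three model integrals $I_1,I_2,I_3$ in \eqref{trib}, each combining a product of logarithmic derivatives of $X(\i\lambda;\theta)$ (equivalently, derivatives of $1/X$) with the functions from the list \eqref{pqbnd}; it then bounds the weight $|\partial^{\le 2}(1/X(\i\lambda;\theta))|^2\Lambda(\i\lambda;\theta_0)$ by $C|\lambda|^{-r}$ using \eqref{ppX}, \eqref{1overX}, \eqref{XXL} and the $\delta$-closeness of $\theta$ to $\theta_0$, and finally applies Lemma \ref{lem:pqbnd} — exactly the chain of estimates you describe, so the two arguments coincide up to bookkeeping.
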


\begin{proof}
In view of formula \eqref{Rtz}, it suffices to show that for all sufficiently small $\delta>0$, there exist positive constants 
$C$, $T_{\min}$ and $c$ such that 
\begin{equation}\label{trib}
\begin{aligned}
&
I_1(t):= \int_{- \infty}^{ \infty}  \Big|\partial_i\partial_j \frac{1}{X(\i\lambda,\theta)}\big(p_t(\i\lambda,\theta)+\tfrac 1 2 \big)\Big|^2 \Lambda(\i\lambda;\theta_0) d\lambda \le C t^{-c},
\\
&
I_2(t) := \int_{- \infty}^{ \infty}  \Big|\partial_j \frac{1}{X(\i\lambda,\theta)}\partial_i p_t(\i\lambda,\theta) \Big|^2 \Lambda(\i\lambda;\theta_0) d\lambda \le C t^{-c},
\\
&
I_3(t) := \int_{- \infty}^{ \infty}  \Big|\frac{1}{X(\i\lambda,\theta)}\partial_i\partial_j  p_t(\i\lambda,\theta) \Big|^2 \Lambda(\i\lambda;\theta_0) d\lambda \le C t^{-c},
\end{aligned}
\end{equation}
for all $\theta$ such that $\|\theta-\theta_0\|\le\delta$ and all $t\ge T_{\min}$. 
The same bounds are obviously true for $q_t(\i\lambda, \theta)$ and its derivatives as well. 

Take an $r>0$ small enough so that the assertion of Lemma \ref{lem:pqbnd} holds. 
Then for any sufficiently small $\delta>0$, the estimate \eqref{ppX} implies that 
$$
\Big|\partial_i\partial_j \frac 1 {X(\i\lambda;\theta)}\Big|^2 \Lambda(\i\lambda;\theta_0) \le C_1 |\lambda|^{-r}
$$
for some constant $C_1>0$, and the first bound in \eqref{trib} holds with $c=1-r$ by Lemma \ref{lem:pqbnd}.
The second bound holds by the same argument since, in view of \eqref{1overX} and \eqref{Lambda},
$$
\Big|\partial_j \frac{1}{X(\i\lambda,\theta)}  \Big|^2 \Lambda(\i\lambda;\theta_0)
=
\begin{cases}
O(|\lambda|^{-2\delta}\log^2 |\lambda|^{-1}), & \lambda\to 0, \\
O(|\lambda|^{2-4H}\log^2 |\lambda|), & \lambda\to \pm \infty.
\end{cases}
$$
To prove the third bound, note that by \eqref{XXL} and \eqref{Lambda},
$$
\Big|\frac{1}{X(\i\lambda,\theta)}  \Big|^2 \Lambda(\i\lambda;\theta_0)-\eps = 
\eps \Big(\frac{\Lambda(\i\lambda;\theta_0)}{\Lambda(\i\lambda;\theta)}-1\Big)=
\begin{cases}
O(|\lambda|^{-2\delta}), & \lambda\to 0, \\
O(|\lambda|^{1-2H_0+2\delta}), & \lambda\to \pm \infty.
\end{cases}
$$
Thus 
\begin{align*}
I_3(t) \le\, & \eps \int_{- \infty}^{ \infty} \Big|\partial_i\partial_j  p_t(\i\lambda,\theta) \Big|^2 
d\lambda  + \\
&
\int_{- \infty}^{ \infty} \Big|\partial_i\partial_j  p_t(\i\lambda,\theta) \Big|^2 
\Big|\Big|\frac{1}{X(\i\lambda,\theta)}\Big|^2\Lambda(\i\lambda;\theta_0)-\eps\Big|d\lambda\le C t^{-1} + C t^{1-r},
\end{align*}
where the last bound is true due to Lemma \ref{lem:pqbnd}.
\end{proof}

\section{Proofs of Theorems \ref{thm2} and \ref{thm3}}\label{sec:sn}

The LAN property in the small noise setting is derived from the large time asymptotics.  
It will be convenient to change some notations in order to to emphasize the more relevant variables. 
In particular, we will indicate the dependence of solution to \eqref{eq} on $\eps$ by the subscript and keep in mind its 
dependence on $\theta$, omitting it from the notations. Thus the equation \eqref{eq} reads 
\begin{equation}\label{eqeq}
\eps g_\eps(t,s) + \int_0^t \sigma^2 c_H |s-r|^{2H-2} g_\eps(t,r) dr =  \sigma^2 c_H s^{2H-2} , \quad 0<s<t,
\end{equation}
where we defined $c_H=H(2H-1)$.

\subsection{The key lemmas}
The following lemma reveals a useful relation between derivatives of $g_\eps(t,s)$ with respect to the parameter  and time variables. 

\begin{lem}\label{lem:inst}
The solution to \eqref{eqeq} with $\eps=1$ satisfies    
$$
t \frac{\partial}{\partial t}g_1(t,s) + s\frac{\partial }{\partial s} g_1(t,s) + g_1(t,s) =   (2H-1)\sigma^2
\frac{\partial }{\partial \sigma^2 }g_1(t,s), \quad 0<s<t.
$$
\end{lem}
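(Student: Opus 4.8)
The idea is to differentiate the defining equation \eqref{eqeq} (with $\eps=1$) in the three variables $t$, $s$, $\sigma^2$ and combine the resulting identities so that the inhomogeneous terms and the integral operator match up. Write the equation as
\begin{equation*}
g_1(t,s) + \sigma^2 c_H\int_0^t |s-r|^{2H-2} g_1(t,r)\,dr = \sigma^2 c_H\, s^{2H-2},\qquad 0<s<t,
\end{equation*}
and denote by $\mathcal K$ the integral operator with kernel $\sigma^2 c_H|s-r|^{2H-2}$ on $(0,t)$, so the equation is $(\mathrm{Id}+\mathcal K)g_1(t,\cdot)=\varphi$ with $\varphi(s)=\sigma^2 c_H s^{2H-2}$. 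First I would differentiate in $t$: the only $t$-dependence on the left is through the solution itself and through the upper limit of integration, giving
\begin{equation*}
\partial_t g_1(t,s) + \sigma^2 c_H\int_0^t |s-r|^{2H-2}\partial_t g_1(t,r)\,dr + \sigma^2 c_H |s-t|^{2H-2} g_1(t,t) = 0 .
\end{equation*}
The boundary term $\sigma^2 c_H|s-t|^{2H-2}g_1(t,t)$ is the obstacle to a clean scaling identity, and the point is that it must be cancelled by the $s\partial_s$ and $\sigma^2\partial_{\sigma^2}$ contributions.

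Next I would compute the Euler (dilation) operator applied to the equation. Apply $t\partial_t + s\partial_s + 1$ to both sides. The right side $\varphi(s)=\sigma^2 c_H s^{2H-2}$ satisfies $s\varphi'(s) = (2H-2)\varphi(s)$, so $(s\partial_s+1)\varphi = (2H-1)\varphi$, while $t\partial_t\varphi=0$; hence the right side becomes $(2H-1)\varphi(s) = (2H-1)\sigma^2 c_H s^{2H-2} = (2H-1)\sigma^2\,\partial_{\sigma^2}\varphi(s)$. For the left side, the key structural fact is that the kernel $|s-r|^{2H-2}$ is homogeneous of degree $2H-2$, so that the operator $\mathcal K$ on $(0,t)$ intertwines with dilations: under the change $r=t\tilde r$, $s=t\tilde s$ one sees that $t\partial_t + s\partial_s$ applied to $\big(\mathcal K g_1(t,\cdot)\big)(s)$ equals $\mathcal K\big((t\partial_t+s\partial_s) g_1(t,\cdot)\big)(s) + (2H-1)\big(\mathcal K g_1(t,\cdot)\big)(s)$ — the extra $(2H-1)$ coming from the homogeneity degree $2H-2$ of the kernel plus the $+1$ from the $dr$ measure — with no boundary term, because the dilation vector field $t\partial_t+s\partial_s+r\partial_r$ is tangent to the scaling of the domain. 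Therefore $(t\partial_t+s\partial_s+1)$ applied to $(\mathrm{Id}+\mathcal K)g_1$ gives $(\mathrm{Id}+\mathcal K)\big[(t\partial_t+s\partial_s+1)g_1\big] + (2H-1)\mathcal K g_1$.

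Finally I would reconcile the $(2H-1)\mathcal K g_1$ remainder with the right side. Differentiating the original equation in $\sigma^2$ gives $(\mathrm{Id}+\mathcal K)\partial_{\sigma^2}g_1 + \frac{1}{\sigma^2}\mathcal K g_1 = \partial_{\sigma^2}\varphi$ (since $\mathcal K$ and $\varphi$ are linear in $\sigma^2$, $\partial_{\sigma^2}$ of $\mathcal K g_1$ is $\frac1{\sigma^2}\mathcal K g_1 + \mathcal K\partial_{\sigma^2}g_1$). Multiplying by $(2H-1)\sigma^2$, $(\mathrm{Id}+\mathcal K)\big[(2H-1)\sigma^2\partial_{\sigma^2}g_1\big] + (2H-1)\mathcal K g_1 = (2H-1)\sigma^2\partial_{\sigma^2}\varphi$. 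Comparing this with the dilation identity from the previous paragraph — whose right side is also $(2H-1)\sigma^2\partial_{\sigma^2}\varphi$ — both $\big(t\partial_t+s\partial_s+1\big)g_1$ and $(2H-1)\sigma^2\partial_{\sigma^2}g_1$ are images under $(\mathrm{Id}+\mathcal K)$ of the same function after subtracting the common $(2H-1)\mathcal K g_1$ term; invoking the uniqueness of the $L^2(0,t)$ solution of $(\mathrm{Id}+\mathcal K)\psi = \cdot$ (guaranteed because the kernel is Hilbert–Schmidt for $H>\tfrac34$ and $\mathrm{Id}+\mathcal K$ is invertible, as noted after \eqref{eq}) yields $t\partial_t g_1 + s\partial_s g_1 + g_1 = (2H-1)\sigma^2\partial_{\sigma^2}g_1$. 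The main obstacle is the careful bookkeeping of the boundary term in $\partial_t$ and verifying it is absorbed correctly in the dilation computation; this is exactly where the self-similar structure of the fBm (homogeneity of the kernel) is used, and the computation should be done by the substitution $r=t\tilde r$, $s=t\tilde s$ to make the cancellation transparent.
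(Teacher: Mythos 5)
Your proposal is correct, and the final step (differentiate in $\sigma^2$, compare, invoke invertibility of $\mathrm{Id}+\mathcal K$) coincides with the paper's; the difference is in how the Euler part $(t\partial_t + s\partial_s + 1)g_1$ is handled. The paper computes $\partial_t g_1$ and $\partial_s(sg_1)$ separately: because $g_1(t,s)\sim s^{2H-2}$ at the left endpoint, the Vainikko--Pedas differentiation formula \eqref{Veq} cannot be applied directly to $g_1$, so the paper passes to the auxiliary function $h(s,t)=s\,g_1(t,s)$ (checking $h(0,t)=0$), uses \eqref{Veq} on $h$, and obtains two equations with explicit boundary terms that are shown to cancel after multiplying the $\partial_t$-equation by $t$. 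Your argument fuses the two derivatives into the dilation operator $E=t\partial_t+s\partial_s$ and exploits the homogeneity of the kernel directly: substituting $r=t\tilde r$, $s=t\tilde s$, one gets $(\mathcal K g_1)(t\tilde s)=t^{2H-1}\sigma^2 c_H\int_0^1|\tilde s-\tilde r|^{2H-2}g_1(t,t\tilde r)\,d\tilde r$, whence $E\circ\mathcal K=\mathcal K\circ E+(2H-1)\mathcal K$ with no boundary terms since $E$ is tangent to the scaled domain $\{0<r<t\}$. That makes the $h=sg_1$ detour and the explicit cancellation of $\sigma^2 c_H\,t\,g_1(t,t)(t-s)^{2H-2}$ unnecessary, and both routes reduce to $(\mathrm{Id}+\mathcal K)\big[(E+1)g_1\big]=(2H-1)g_1=(\mathrm{Id}+\mathcal K)\big[(2H-1)\sigma^2\partial_{\sigma^2}g_1\big]$. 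What you gain is a cleaner, structurally transparent derivation that makes the role of self-similarity explicit; what you pay for it is that the "no boundary term" claim hides the justification of differentiating under the integral in the rescaled $t$-variable, which requires roughly the same regularity facts about $g_1$ (in particular its $s^{2H-2}$ behavior near the endpoint) that the paper instead invokes via \cite{VP80} and \cite{CCK} -- you flagged this yourself as the remaining issue, and it is the one gap that would need to be filled to make the plan a complete proof.
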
 
\begin{proof}
The function $g_1(t,s)$ diverges to $\infty$ as $s\to 0$, which makes a useful differentiation formula from \cite{VP80} 
inapplicable, cf. \eqref{Veq} below. 
To avoid this difficulty, define the function $h(s,t) = s g_1(t,s)$, then 
$$
s\frac{\partial }{\partial s} g_1(t,s) + g_1(t,s) = \frac {\partial}{\partial s} \big(s g_1(t,s)\big).
$$
Multiplying the equation 
\begin{equation}\label{g1}
g_1(t,s) + \int_0^t \sigma^2 c_H |s-r|^{2H-2} g_1(t,r) dr =  \sigma^2 c_H s^{2H-2}, \quad 0<s<t,
\end{equation}
by $s$ and rearranging terms gives
\begin{multline}\label{hst}
h(s,t) + \int_0^t \sigma^2 c_H |s-r|^{2H-2} h(r,t) dr 
= \\
\sigma^2 c_H\left(\int_0^t  |s-r|^{2H-2}(r-s) g_1(t,r) dr
+   s^{2H-1} \right).
\end{multline}
The expression in the brackets in the right hand side is differentiable in $s$ with the derivative 
\begin{align*}
&
\frac {\partial}{\partial s}
\left(s^{2H-1}+\int_0^t  |s-r|^{2H-2}(r-s) g_1(t,r) dr\right) =\\
&
\frac {\partial}{\partial s}
\left(
s^{2H-1}
-\int_0^s  (s-r)^{2H-1}  g_1(t,r) dr + \int_s^t  (r-s)^{2H-1}  g_1(t,r) dr
\right) =\\
&
(2H-1)\left( 
s^{2H-2}
-\int_0^s  (s-r)^{2H-2}  g_1(t,r) dr -  \int_s^t  (r-s)^{2H-2}  g_1(t,r) dr
\right) =\\
&
(2H-1)\left( 
s^{2H-2}-\int_0^s  |s-r|^{2H-2}  g_1(t,r) dr     
\right) =
 \frac{2H-1}{c_H\sigma^2}g_1(t,s).
\end{align*}
Since the solution $h(s,t)$ is differentiable at any $s\in (0,t)$ (see \cite{VP80})
\begin{equation}\label{Veq}
\begin{aligned}
& 
\frac \partial{\partial s} \int_0^t h(r,t) |s-r|^{2H-2} dr =  \\
&
\int_0^t |s-r|^{2H-2}  \frac \partial{\partial r}h(r,t) dr+h(0,t)s^{2H-2}
-h(t,t)(t-s)^{2H-2},
\end{aligned}
\end{equation}
where $h(0,t) =0$ by \eqref{hst}. Thus the right hand side of \eqref{hst} is differentiable and in view of the above formulas 
\begin{multline}\label{addme}
\frac \partial{\partial s}  h(s,t) + \int_0^t \sigma^2 c_H|s-r|^{2H-2}  \frac \partial{\partial r}h(r,t) dr 
=  \\
 (2H-1) g_1(t,s) + \sigma^2 c_H t g_1(t,t)(t-s)^{2H-2}.
\end{multline}
Arguing differentiability of $g_1(t,s)$ with respect to $t$ as in \cite[Lemma 3.5(i)]{CCK} and taking the derivative of \eqref{g1} we get 
$$
\frac{\partial} {\partial t} g_1(t,s) + \int_0^t \sigma^2 c_H |s-r|^{2H-2} \frac{\partial} {\partial t} g_1(t,r) dr 
=-   \sigma^2 c_H g_1(t,t) (t-s)^{2H-2} .
$$
Multiplying this equation by $t$ and adding the result to \eqref{addme} gives
\begin{equation}\label{thateq}
\begin{aligned} 
&
\Big( t\frac{\partial} {\partial t} g_1(t,s) + \frac \partial{\partial s}  h(s,t)\Big) + \\
&
\int_0^t \sigma^2 c_H |s-r|^{2H-2} \Big(
t \frac{\partial} {\partial t} g_1(t,r) +  \frac \partial{\partial r}h(r,t)\Big) dr 
=  
 (2H-1) g_1(t,s).
\end{aligned}
\end{equation}
On the other hand, differentiating \eqref{g1} with respect to $\sigma^2$ shows that 
\begin{multline*}
\frac{\partial }{\partial \sigma^2}g_1(t,s) +  \int_0^t \sigma^2 c_H |s-r|^{2H-2} \frac{\partial }{\partial \sigma^2} g_1(t,r) dr
+ \\ \int_0^t   c_H |s-r|^{2H-2} g_1(t,r) dr =    c_H s^{2H-2},
\end{multline*}
or equivalently, 
$$
\frac{\partial }{\partial \sigma^2}g_1(t,s) +  \int_0^t \sigma^2 c_H |s-r|^{2H-2} \frac{\partial }{\partial \sigma^2} g_1(t,r) dr
= \frac 1 {\sigma^2} g_1(t,s).
$$
Comparing this equation to \eqref{thateq} we conclude that 
$$
 t\frac{\partial} {\partial t} g_1(t,s) + \frac \partial{\partial s}  h(s,t) = (2H-1) \sigma^2 \frac{\partial }{\partial \sigma^2}g_1(t,s)
$$
by uniqnuness of the solution. 
\end{proof}

The solution to \eqref{eqeq} satisfies the following pivotal scaling property with respect to $\eps$. 

\begin{lem}\label{lem:s}
Let $\gamma = 1/(2H-1)$ and define
$$
M(\eps, \theta) = \begin{pmatrix}
1 & -2\sigma^2  \log\eps^{-\gamma}\\
0 & 1
\end{pmatrix}, \quad 
\nu(\eps,\theta) =
\begin{pmatrix}
4\sigma^2 \log^2\eps^{-\gamma}  & -2\log \eps^{-\gamma} \\
-2\log \eps^{-\gamma} & 0 
\end{pmatrix}.
$$
Then for any $\eps>0$ and $t > s > 0$,
\begin{align}
\label{s1}
g_\eps(t,s) =\, & \eps^{-\gamma} g_1 \big(t\eps^{-\gamma}, s\eps^{-\gamma}\big), \\
\label{s2}
\nabla g_\eps(t,s)    =\, &  \eps^{-\gamma}  \nabla g_1 \big(t\eps^{-\gamma}, s\eps^{-\gamma}\big)M(\eps,\theta)^\top, \\
\nonumber 
\nabla^2 g_\eps(t,s) =\, & 
\eps^{-\gamma} \nu(\eps,\theta)\frac{\partial}{\partial \sigma^2 } g_1 \big(t\eps^{-\gamma}, s\eps^{-\gamma}\big)
\\
& \label{s3}
+ \eps^{-\gamma} M(\eps, \theta) \nabla^2 g_1 \big(t\eps^{-\gamma}, s\eps^{-\gamma}\big) M(\eps, \theta)^\top.
\end{align}

\end{lem}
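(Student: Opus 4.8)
The plan is to prove the three scaling identities \eqref{s1}--\eqref{s3} in order, deriving each from the previous one. The starting point is a change of variables in the integral equation \eqref{eqeq}. Writing $a:=\eps^{-\gamma}$ with $\gamma=1/(2H-1)$, substitute $s = a^{-1}\tilde s$, $r = a^{-1}\tilde r$ in \eqref{eqeq} and use the homogeneity of the kernel, $|s-r|^{2H-2} = a^{2-2H}|\tilde s-\tilde r|^{2H-2}$ and $s^{2H-2}=a^{2-2H}\tilde s^{2H-2}$, together with $dr = a^{-1}d\tilde r$. Since $2-2H-1 = 1-2H = -1/\gamma$, the factor $a^{2-2H}\cdot a^{-1} = a^{-1/\gamma} = \eps$ appears in front of the integral, and after dividing the whole equation by $\eps$ one sees that the function $(\tilde t,\tilde s)\mapsto a^{-1}g_1(\tilde t,\tilde s)$ with $\tilde t = at$ satisfies exactly the equation \eqref{eqeq} that characterizes $g_\eps(t,s)$. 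By uniqueness of the $L^2$ solution this gives \eqref{s1}.

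Next I would differentiate \eqref{s1} with respect to the parameter $\theta = (H,\sigma^2)$. The $\sigma^2$-derivative is immediate: since $\gamma$ and $a$ do not depend on $\sigma^2$, $\partial_{\sigma^2} g_\eps(t,s) = \eps^{-\gamma}\,\partial_{\sigma^2} g_1(t\eps^{-\gamma},s\eps^{-\gamma})$, which matches the second column of $M(\eps,\theta)^\top$. For the $H$-derivative one must account for the $H$-dependence of $\gamma$ through the argument $\eps^{-\gamma}$: by the chain rule
\begin{align*}
\partial_H g_\eps(t,s) = \eps^{-\gamma}\Big(\partial_H g_1 + \big(t\,\partial_t g_1 + s\,\partial_s g_1 + g_1\big)\,\partial_H(-\gamma\log\eps^{-1})\Big),
\end{align*}
where all $g_1$ and its derivatives are evaluated at $(t\eps^{-\gamma},s\eps^{-\gamma})$, and the extra $g_1$ term comes from differentiating the prefactor $\eps^{-\gamma}$. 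Here Lemma \ref{lem:inst} is exactly what is needed: it lets one replace the combination $t\,\partial_t g_1 + s\,\partial_s g_1 + g_1$ by $(2H-1)\sigma^2\,\partial_{\sigma^2} g_1$. A short computation of $\partial_H(-\gamma\log\eps^{-1}) = \partial_H\big(\tfrac{1}{2H-1}\big)\log\eps = -\tfrac{2}{(2H-1)^2}\log\eps = \tfrac{2}{2H-1}\log\eps^{-1/(2H-1)}\cdot\tfrac{1}{\,}$, combined with the factor $(2H-1)\sigma^2$, collapses to $-2\sigma^2\log\eps^{-\gamma}$, which is precisely the $(1,2)$-entry $-2\sigma^2\log\eps^{-\gamma}$ appearing in $M(\eps,\theta)$. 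Assembling the two partial derivatives into a row vector yields \eqref{s2}.

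Finally, \eqref{s3} follows by differentiating \eqref{s2} once more in $\theta$. Writing $G := \nabla g_1(t\eps^{-\gamma},s\eps^{-\gamma})$ for brevity, $\nabla g_\eps = \eps^{-\gamma} G\, M^\top$, so $\nabla^2 g_\eps = \eps^{-\gamma}\big(\nabla(GM^\top)\big)$ and one must differentiate both the matrix $M^\top$ (whose only non-constant entry involves $\log\eps^{-\gamma}$, contributing the rank-one ``$\nu$'' piece proportional to $\partial_{\sigma^2} g_1$) and the vector $G$, whose $H$-derivative again produces, via Lemma \ref{lem:inst} and the chain rule through $\eps^{-\gamma}$, a conjugation by $M$. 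Tracking the $\eps$-powers: each parameter differentiation of the argument $t\eps^{-\gamma}$ brings down a factor $\log\eps^{-\gamma}$ but no new power of $\eps$, so the overall prefactor remains $\eps^{-\gamma}$, and the two $M$-factors on either side of $\nabla^2 g_1$ arise symmetrically from differentiating the two copies of $M^\top$ implicit in the Hessian. Matching entries gives the matrices $\nu(\eps,\theta)$ and the sandwiched form $M\,\nabla^2 g_1\,M^\top$. The main obstacle is purely bookkeeping: keeping careful track of which $\log\eps^{-\gamma}$ and $(2H-1)$ factors go where when the chain rule is applied through the $H$-dependent exponent, and making sure the differentiation-under-the-integral and differentiability-in-$s$ steps (justified by \cite{VP80} and as in Lemma \ref{lem:inst}) are legitimate; once Lemma \ref{lem:inst} is in hand, no further analytic input is required.
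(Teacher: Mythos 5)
Your argument is correct and follows essentially the same route as the paper: (i) scaling the integral equation by $\eps^{-\gamma}$ to get \eqref{s1} and invoking uniqueness, (ii) differentiating in $\theta$ and using Lemma \ref{lem:inst} to convert the combination $t\partial_t g_1+s\partial_s g_1+g_1$ into $(2H-1)\sigma^2\partial_{\sigma^2}g_1$ for the $H$-derivative, and (iii) differentiating once more while bookkeeping how the $M$-factor and the $\log\eps^{-\gamma}$ coefficients propagate through the chain rule. The only blemish is a garbled line in your $\partial_H$ computation (the displayed chain ending in $\cdot\tfrac{1}{\,}$ is broken), but the conclusion $-2\sigma^2\log\eps^{-\gamma}$ you extract from it is right, and the step-(iii) sketch, though not as explicitly tabulated as the paper's calculation, identifies the same sources of the $\nu(\eps,\theta)\partial_{\sigma^2}g_1$ term and the $M\nabla^2 g_1 M^\top$ sandwich.
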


\begin{proof}
Identity \eqref{s1} is obtained by scaling all the variables in equation \eqref{g1}
by $\eps^{-\gamma}$. To verify the identities for derivatives it will be convenient to use the short notations 
\begin{align*}
& g_1'(t,s):=\frac{\partial}{\partial H} g_1(t,s), \\
& g_1^{\bullet}(t,s):=\frac{\partial}{\partial \sigma^2} g_1(t,s),  \\
& g_1''(t,s) : =\frac{\partial^2}{\partial H^2}  g_1(t,s), \\
& g_1^{\prime \bullet }(t,s) : =\frac{\partial}{\partial H} \frac{\partial}{\partial \sigma^2} g_1(t,s), \\
& g_1^{\bullet \bullet }(t,s) : = \frac{\partial^2}{{\partial \sigma^2}^2} g_1(t,s),
\end{align*}
and define the variables $u:= s\eps^{-\gamma}$ and $v:=t \eps^{-\gamma}$.
Then 
$$
\frac \partial {\partial \sigma^2}g_\eps(t,s) =   \eps^{-\gamma}   g_1^\bullet \big(v, u\big)
$$  
and, in view of Lemma \ref{lem:inst},
\begin{align*}
&
\frac \partial {\partial H}g_\eps(t,s) = 
\eps^{-\gamma}      g_1'  (v,u ) + \\
&
\frac  {\partial \gamma } {\partial H} \Big(
 \eps^{-\gamma} \log\eps^{-1}  g_1  (v,u)
+
\eps^{-\gamma}  \frac{\partial u}{\partial \gamma}\frac{\partial}{\partial u}   g_1  (v,u )  
+
\eps^{-\gamma}  \frac{\partial v}{\partial \gamma}\frac{\partial}{\partial  v}   g_1  (v,u) 
\Big) =\\
&
\eps^{-\gamma}     g_1'  (v,u)  
-2\gamma^2 \eps^{-\gamma} \log\eps^{-1} \Big(
g_1  (v,u) + u\frac{\partial}{\partial u}   g_1  (v,u)  + v\frac{\partial}{\partial  v}   g_1  (v,u) 
\Big) =\\
&
\eps^{-\gamma} \Big(  g_1'  (v,u ) - 2 \log\eps^{-\gamma}   \sigma^2 g_1^{\bullet}(v,u)\Big),
\end{align*}
which verifies \eqref{s2}. Taking another derivative with respect to $H$ we get
\begin{align*}
\frac{\partial^2}{\partial H^2} g_\eps(s,t)= &
-2\gamma^2 \eps^{-\gamma} \log\eps^{-1}
  \Big(  g_1'  (v,u) - 2 \log\eps^{-\gamma}   \sigma^2 g_1^{\bullet}(v,u)\Big) 
+ \\
&
\eps^{-\gamma} 
\frac{\partial}{\partial H} \Big(  g_1'  (v,u) - 2 \log\eps^{-\gamma}   \sigma^2 g_1^{\bullet}(v,u)\Big).
\end{align*}
Here 
\begin{align*}
\frac{\partial}{\partial H}  g_1'  (v,u) = & 
g_1''(v,u) -2\gamma^2 \log \eps^{-1} \Big(u\frac{\partial}{\partial u}g_1'(v,u) + v\frac{\partial}{\partial v}g_1'(v,u) 
\Big).
\end{align*}
By Lemma \ref{lem:inst}
$$
v \frac{\partial}{\partial v}g_1'(v,u) + u\frac{\partial }{\partial u} g_1'(v,u) + g_1'(v,u) =    2\sigma^2 
 g_1^{\bullet}(v,u) 
 +
  \frac{\sigma^2 }{\gamma}
 g_1^{\bullet\prime}(v,u)
$$
and hence 
$$
\frac{\partial}{\partial H}  g_1'  (v,u) =   
g_1''(v,u) -2\gamma^2 \log \eps^{-1} \Big(
2\sigma^2 
 g_1^{\bullet}(v,u) 
 +
  \frac{\sigma^2 }{\gamma}
 g_1^{\bullet\prime}(v,u)-g_1'(v,u)
\Big).
$$
Similarly,  
\begin{align*}
&
\frac{\partial}{\partial H}  \Big( \gamma     g_1^{\bullet}(v,u) \Big) = 
 -2\gamma^2 g_1^{\bullet}(v,u) + \gamma  \frac{\partial}{\partial H}   g_1^{\bullet}(v,u) =\\
&
-2\gamma^2 g_1^{\bullet}(v,u) + \gamma   \Big(  g_1^{\bullet'}(v,u) -2\gamma^2 \log\eps^{-1}\Big(u\frac{\partial}{\partial u}g_1^\bullet(v,u) 
+v\frac{\partial}{\partial v}g_1^\bullet(v,u) 
  \Big) \Big).
\end{align*}
By Lemma \ref{lem:inst}
$$
v \frac{\partial}{\partial v}g_1^\bullet (v,u) + u\frac{\partial }{\partial u} g_1^\bullet (v,u) + g_1^\bullet (v,u) =    
  \frac{1}{\gamma} g_1^{\bullet}(v,u)
  +
  \frac{\sigma^2 }{\gamma} g_1^{\bullet\bullet}(v,u),
$$
and hence 
\begin{align*}
\frac{\partial}{\partial H}  \Big( \gamma     g_1^{\bullet}(v,u) \Big) = &
-2\gamma^2 g_1^{\bullet}(v,u) + \gamma     g_1^{\bullet'}(v,u) \\
&
-2\gamma^2 \log\eps^{-\gamma}\Big(\frac{1}{\gamma} g_1^{\bullet}(v,u)
  +
  \frac{\sigma^2 }{\gamma} g_1^{\bullet\bullet}(u,v)- g_1^\bullet (v,u)
  \Big)  .
\end{align*}
Plugging these equations we get 
\begin{align*}
\frac{\partial^2}{\partial H^2} g_\eps(t,s)=\, & \eps^{-\gamma} 
\Big(
 g_1''(v,u)    
  -4    \log \eps^{-\gamma} \sigma^2   g_1^{\bullet\prime}(v,u)
  +
4    \sigma^4 \log^2\eps^{-\gamma}        g_1^{\bullet\bullet}(v,u)  
\Big) \\
&
+4       \eps^{-\gamma}  \log^2\eps^{-\gamma}  \sigma^2   g_1^{\bullet}(v,u).
\end{align*}
The other two second order derivatives are  
\begin{align*}
\frac{\partial^2 }{(\partial  \sigma^2)^2} g_\eps (t,s) = &
  \eps^{-\gamma} g_1^{\bullet\bullet} \big(t\eps^{-\gamma}, s\eps^{-\gamma}\big), \\
\frac{\partial^2}{\partial \sigma^2 \partial H} g_\eps (t,s) = & 
\eps^{-\gamma} \Big(  g_1^{\prime\bullet}  (v,u) 
- 2 \log\eps^{-\gamma}   \sigma^2 g_1^{\bullet\bullet}(v,u)- 2 \log\eps^{-\gamma}    g_1^{\bullet}(v,u)
\Big).
\end{align*}
In matrix notation this gives \eqref{s3}.
\end{proof}

\subsection{Proof of Theorem \ref{thm2}}

The relevant likelihood ratio is, cf. \eqref{like},
\begin{equation}\label{LReps}
\begin{aligned} 
\log \frac{d\P^\eps_{\theta_0+ \phi(\eps) u}}{d\P^\eps_{\theta_0}}(X^T)  = &
\frac  1{\sqrt \eps}  \int_0^T  \big(\rho^\eps_t(X^\eps,\theta_0+\phi(\eps)u)-\rho^\eps_t(X^\eps,\theta_0)\big)  d\overline B_t  \\
- & \frac 1 2\frac  1\eps  \int_0^T \big(\rho^\eps_t(X^\eps,\theta_0+ \phi(\eps)u)-\rho^\eps_t(X^\eps,\theta_0)\big)^2dt,
\end{aligned}
\end{equation}
where $T$ is fixed and dependence on $\eps$ is emphasized by superscripts. Here, cf. \eqref{rhodiff}, 
$$
\begin{aligned}
&
 \rho^\eps_t(X^\eps,\theta_0+ \phi(\eps) u)-\rho^\eps_t(X^\eps,\theta_0) = \\
&
    \nabla \rho^\eps_t(X^\eps,\theta_0) \phi(\eps) u 
 +    \int_0^1\int_0^\tau u^\top \phi(\eps)^\top  \nabla^2 \rho^\eps_t(X^\eps,\theta_0+s \phi(\eps) u ) \phi(\eps) uds d\tau.
\end{aligned}
$$
We will argue that for an appropriate choice of $\phi(\eps)=\phi(\eps,\theta_0)$ 
\begin{equation}\label{une}
\frac 1 {\eps }  u^\top \phi(\eps)^\top \left(\int_0^T  \nabla^\top \rho^\eps_t(X^\eps,\theta_0) \nabla  \rho^\eps_t(X^\eps,\theta_0) dt\right) \phi(\eps) u
\xrightarrow[\eps\to 0] {\P_{\theta_0} } \|u\|^2
\end{equation}
and
\begin{equation}\label{doux}
\E \frac  1\eps  \int_0^T\left(\int_0^1\int_0^\tau u^\top \phi(\eps)^\top  \nabla^2 \rho^\eps_t\big(X^\eps,\theta_0+s \phi(\eps) u \big) \phi(\eps) uds d\tau\right)^2
dt \xrightarrow[\eps\to 0]{ } 0.
\end{equation}
Then the second term in \eqref{LReps} converges to $-\frac 1 2 \|u\|^2$  in probability and the stochastic integral 
converges in distribution to $u^\top  Z$ with $Z\sim N(0,\mathrm{Id})$, see  \cite[Ch. IX.5]{JS}.

In view of Lemma \ref{lem:s},  
\begin{align*}
&
\nabla \rho^\eps_t(X^\eps,\theta_0) =   \int_0^t \nabla g_\eps(t,t-s)dX^\eps_s = \\
&
\int_0^t \nabla g_\eps(t,t-s)\sigma_0 dB^{H_0}_s + \sqrt \eps\int_0^t \nabla g_\eps(t,t-s)dB_s =\\
&
\int_0^t 
\eps^{-\gamma_0}  \nabla g_1 \big(t\eps^{-\gamma_0}, (t-s)  \eps^{-\gamma_0}\big)M(\eps,\theta_0)^\top  
\sigma_0 dB^{H_0}_s + \\
&
\sqrt \eps\int_0^t  
\eps^{-\gamma_0}  \nabla g_1 \big(t\eps^{-\gamma_0}, (t-s)  \eps^{-\gamma_0}\big)M(\eps,\theta_0)^\top 
dB_s \stackrel{d}=\\
&
\eps^{-\gamma_0}\eps^{\gamma_0H_0}\left(\int_0^{t\eps^{-\gamma_0}} 
  \nabla g_1 \big(t\eps^{-\gamma_0},  t\eps^{-\gamma_0} -s   \big) 
\sigma_0 dB^{H_0}_s\right)M(\eps,\theta_0)^\top  + \\
&
\eps^{1/2-\gamma_0} \eps^{\gamma_0/2}\left(\int_0^{t\eps^{-\gamma_0}}  
  \nabla g_1 \big(t\eps^{-\gamma_0}, t \eps^{-\gamma_0}-s  \big)
dB_s \right) M(\eps,\theta_0)^\top  =\\
&
\eps^{(1-\gamma_0)/2} \nabla \rho^1_{t\eps^{-\gamma_0}}(X^1,\theta_0)M(\eps,\theta_0)^\top,
\end{align*}
where the equality in distribution holds by the self-similarity of the fBm. 
This equality holds simultaneously for all $t\in \Real_+$
and hence the two processes coincide in distribution. 
Consequently 
\begin{align*}
&
\frac 1 {\eps }   \phi(\eps)^\top \int_0^T  \nabla^\top \rho^\eps_t(X^\eps,\theta_0) \nabla  \rho^\eps_t(X^\eps,\theta_0) dt 
\phi(\eps) \stackrel{d}= \\
&
 \eps^{ -\gamma_0}  \phi(\eps)^\top M(\eps,\theta_0)
 \bigg(\int_0^T  \nabla^\top \rho^1_{t\eps^{-\gamma_0}}(X^1,\theta_0) \nabla \rho^1_{t\eps^{-\gamma_0}}(X^1,\theta_0) dt\bigg) 
 M(\eps,\theta_0)^\top\phi(\eps) =\\
&
 T\eps^{-\gamma_0}\phi(\eps)^\top M(\eps,\theta_0)
 \bigg(\frac{1}{T\eps^{-\gamma_0}}\int_0^{T\eps^{-\gamma_0}} \nabla^\top \rho^1_t(X^1,\theta_0) \nabla \rho^1_t(X^1,\theta_0) dt\bigg) 
 M(\eps,\theta_0)^\top\phi(\eps).
\end{align*}
In view of \eqref{c1} 
$$
\frac{1}{T\eps^{-\gamma_0}}\int_0^{T\eps^{-\gamma_0}} \nabla^\top \rho^1_t(X^1,\theta_0) \nabla \rho^1_t(X^1,\theta_0) dt
\xrightarrow[\eps\to 0]{L_2(\Omega)} I(\theta_0;1),
$$
and hence \eqref{une} holds if $\phi(\eps)=\phi(\eps,\theta_0)$ satisfies \eqref{condphi}.  
It remains to show that \eqref{doux} holds for the same choice of $\phi(\eps,\theta_0)$. To this end,  
\begin{align*}
&
\E \frac  1\eps  \int_0^T\left(\int_0^1\int_0^\tau u^\top \phi(\eps)^\top  \nabla^2 \rho^\eps_t(X^\eps,\theta_0+s \phi(\eps) u) \phi(\eps) uds d\tau\right)^2
dt \le \\
&
\frac  1\eps \int_0^1 \int_0^T\E   \Big(  u^\top \phi(\eps)^\top  \nabla^2 \rho^\eps_t(X^\eps,\theta_0+s \phi(\eps) u ) \phi(\eps) u \Big)^2 
dtds.
\end{align*}
Under the condition \eqref{condphi}, $\|\phi(\eps,\theta_0)\|  = O\big( \eps^{\gamma_0/2}\log \eps^{-1}\big)$ and it suffices to check that 
$$
\eps^{2\gamma_0-1}\log^4\eps^{-1}    \int_0^T\E   \Big\|\nabla^2 \rho^\eps_t(X^\eps,\theta)\Big\|^2 
dt \xrightarrow[\eps\to 0]{}0,
$$
uniformly over $\{\theta:\|\theta_0-\theta\|\le \delta\}$ for all $\delta>0$ small enough.
Recall that  
$$
\nabla^2 \rho^\eps_t(X^\eps,\theta) = \int_0^t \nabla^2 g_\eps(t,t-s)\sigma_0 dB^{H_0}_s +\sqrt \eps
\int_0^t \nabla^2 g_\eps(t,t-s) dB_s.
$$
In view of  Lemma \ref{lem:s} 
\begin{align*}
&
\int_0^t \nabla^2 g_\eps(t,t-s)  dB^{H_0}_s =\\
&
\eps^{-\gamma} M(\eps, \theta) \int_0^t  \nabla^2 g_1 \big(t\eps^{-\gamma}, (t-s)\eps^{-\gamma}\big) dB^{H_0}_s M(\eps, \theta)^\top 
+ \\
&
 \eps^{-\gamma} \nu(\eps,\theta) \int_0^t  \frac{\partial}{\partial \sigma^2 } g_1 \big(t\eps^{-\gamma}, (t-s)\eps^{-\gamma}\big)dB^{H_0}_s \stackrel{d}=\\
&
\eps^{\gamma H_0-\gamma} M(\eps, \theta) \int_0^{t\eps^{-\gamma}}  \nabla^2 g_1 \big(t\eps^{-\gamma},  t\eps^{-\gamma}-s\big) dB^{H_0}_s  M(\eps, \theta)^\top 
+ \\
&
 \eps^{\gamma H_0-\gamma} \nu(\eps,\theta) \int_0^{t\eps^{-\gamma}}  \frac{\partial}{\partial \sigma^2 } g_1 \big(t\eps^{-\gamma},  t\eps^{-\gamma} -s \big) dB^{H_0}_s =: J_1(\eps,t\eps^{-\gamma})+J_2(\eps,t\eps^{-\gamma}).
\end{align*}
The first term satisfies 
\begin{align*}
&
\eps^{2\gamma_0-1}\log^4\eps^{-1}    \int_0^T\E   \Big\|J_1(\eps,t\eps^{-\gamma})\Big\|^2 dt =\\
&
\eps^{2\gamma_0-1}\log^4\eps^{-1} T \frac1{T\eps^{-\gamma}}   \int_0^{T\eps^{-\gamma}}\E   \Big\|J_1(\eps,t)\Big\|^2 dt  \le \\
&
\eps^{2\gamma_0-1}\log^4\eps^{-1}\eps^{2\gamma H_0-2\gamma} \big\|M(\eps, \theta) \big\|^4 T \frac1{T\eps^{-\gamma}}   \int_0^{T\eps^{-\gamma}}\E   
 \Big\| \int_0^t    \nabla^2 g_1 \big(t,  t-s\big) dB^{H_0}_s \Big\|^2 
 dt \le \\
&
 \eps^{2(\gamma_0-\gamma)+2(H_0-H)\gamma+\gamma} \log^8\eps^{-1}  T C\xrightarrow[\eps\to 0]{}0
\end{align*}
where the last bound is due to Lemma \ref{lem2} and the convergence holds uniformly over 
$\{\theta: \|\theta-\theta_0\|\le \delta\}$  for all sufficiently small $\delta>0$. 
Similarly, 
\begin{align*}
&
\eps^{2\gamma_0-1}\log^4\eps^{-1}    \int_0^T\E   \Big\|J_2(\eps,t\eps^{-\gamma})\Big\|^2 dt =\\
&
\eps^{2\gamma_0-1}\log^4\eps^{-1} T \frac 1 {T\eps^{-\gamma}}   \int_0^{T\eps^{-\gamma}} \E   \Big\|J_2(\eps,t )\Big\|^2 
dt  =\\
&
\eps^{2\gamma_0-1}\eps^{2\gamma H_0-2\gamma}\log^4\eps^{-1} \|\nu(\eps,\theta)\|^2  T \frac 1 {T\eps^{-\gamma}}  \int_0^{T\eps^{-\gamma}} \E   \Big\|
   \int_0^{t }  \frac{\partial}{\partial \sigma^2 } g_1 \big(t,  t  -s \big) dB^{H_0}_s
\Big\|^2 
dt \le \\
&
 \eps^{2(\gamma_0-\gamma)+2(H_0-H)\gamma+\gamma} \log^8\eps^{-1}   T C \xrightarrow[\eps\to 0]{}0,
\end{align*}
where the last inequality is due to Lemma \ref{lem1}.
Analogously, 
\begin{align*}
&
\sqrt \eps \int_0^t \nabla^2 g_\eps(t,t-s) dB_s = \\
&
\sqrt \eps  \eps^{-\gamma} M(\eps, \theta)
\Big(\int_0^t \nabla^2 g_1 \big(t\eps^{-\gamma}, (t-s)\eps^{-\gamma}\big)  dB_s\Big) M(\eps, \theta)^\top
+ \\
&
\sqrt \eps \eps^{-\gamma} \nu(\eps,\theta)
\int_0^t \frac{\partial}{\partial \sigma^2 } g_1 \big(t\eps^{-\gamma}, (t-s)\eps^{-\gamma}\big)dB_s \stackrel{d}=\\
&
  \eps^{(1-\gamma)/2}   M(\eps, \theta)
\Big(\int_0^{t\eps^{-\gamma}} \nabla^2 g_1 \big(t\eps^{-\gamma},  t\eps^{-\gamma}-s\big)  dB_s\Big) M(\eps, \theta)^\top
+ \\
&
 \eps^{(1-\gamma)/2}  \nu(\eps,\theta)
\int_0^{t\eps^{-\gamma}} \frac{\partial}{\partial \sigma^2 } g_1 \big(t\eps^{-\gamma},  t\eps^{-\gamma}-s\big)dB_s =:
J_1(\eps,t\eps^{-\gamma}) + J_2(\eps,t\eps^{-\gamma}).
\end{align*}
The first term vanishes asymptotically as $\eps\to 0$,
\begin{align*}
&
\eps^{2\gamma_0-1}\log^4\eps^{-1}    \int_0^T\E   \Big\|J_1(\eps,t\eps^{-\gamma})\Big\|^2 
dt  = \\
&
\eps^{2\gamma_0-1}\log^4\eps^{-1} T  \frac{1}{T\eps^{-\gamma}} \int_0^{T\eps^{-\gamma}}\E   \Big\|J_1(\eps,t)\Big\|^2 
dt \le \\
&
\eps^{2\gamma_0-1} \eps^{1-\gamma}\log^4\eps^{-1} \|M(\eps, \theta)\|^4 T  \frac{1}{T\eps^{-\gamma}} \int_0^{T\eps^{-\gamma}}\E   \Big\|
\int_0^t \nabla^2 g_1 \big(t,  t-s\big)  dB_s   
\Big\|^2 
dt \le \\
&
\eps^{\gamma_0+(\gamma_0-\gamma)}  \log^4\eps^{-1} \|M(\eps, \theta)\|^4 T  C \xrightarrow[\eps\to 0]{}0,
\end{align*}
where the last inequality holds by Lemma \ref{lem2}. Similarly, 
$$
\eps^{2\gamma_0-1}\log^4\eps^{-1}    \int_0^T\E   \Big\|J_2(\eps,t\eps^{-\gamma})\Big\|^2 
dt\xrightarrow[\eps\to 0]{}0.
$$
This verifies \eqref{doux} and completes the proof.

\subsection{Proof of Corollary \ref{cor1}}
For brevity denote the matrices in \eqref{condphi} by $M:= M(\eps,\theta_0)$ and $I:=I(\theta_0;1)$. 
Consider the Cholesky decomposition 
$$
M I M^\top=LL^\top,
$$
where $L$ is the unique lower triangular matrix with positive diagonal entries. A simple calculation shows that 
$$
L = \begin{pmatrix}
  m(\eps)  \sqrt{I_{22}}  & 0 \\
   \sqrt{I_{22}}  & 
\displaystyle\frac 1 {m(\eps)} \sqrt{ I_{11}    - I_{12}^2/I_{22} }   
\end{pmatrix} \big(1+o(1)\big), \quad \eps \to 0
$$
where $m(\eps)= 2\sigma_0^2  \log\eps^{-1/(2H_0-1)}$. Hence the matrix 
$$
\phi(\eps, \theta_0)^\top  = \eps^{1/(4H_0-2)}\frac 1 {\sqrt{T}} L^{-1}
$$
satisfies condition \eqref{condphi}. The assertion (2) of Corollary \ref{cor1} is obtained by applying Theorem \ref{thm:HLC}
to loss functions constant in the first variable. The assertion (1) is proved similarly, using the upper 
triangular Cholesky decomposition. 

\subsection{Proof of Theorem \ref{thm3}}
For a fixed $\sigma_0^2$, the LAN property of the one dimensional family 
$\big(\P^\eps_{(H,\sigma_0^2)}\big)_{H\in (3/4,1)}$
is obtained by considering the likelihood ratio \eqref{LReps} with diagonal $\phi(\eps,\theta_0)$ and 
$u$ restricted to the line $\{u_1 e_1: u_1\in \Real\}$ where $e_1 = (1,0)^\top$. For the vectors from this subspace, 
the limit \eqref{une} holds if, cf. \eqref{condphi},
$$
\eps^{-1/(2H_0-1)} e_1^\top \phi(\eps,\theta_0)^\top M(\eps,\theta_0) T I(\theta_0;1)M(\eps,\theta_0)^\top\phi(\eps,\theta_0)e_1\xrightarrow[\eps\to 0]{}1.
$$
For diagonal $\phi(\eps, \theta_0)$, this convergence is true if
$$
\phi_{11}(\eps, \theta_0) = \eps^{1/(4H_0-2)} \frac 1 {2\sigma_0^2  \log\eps^{-1/(2H_0-1)}}\frac 1{\sqrt{T I_{22}(\theta_0;1)}}
$$ 
which is the scaling claimed in \eqref{rateH}. The property \eqref{doux} continues to hold as before. This proves assertion (1) of 
Theorem \ref{thm3}. Assertion (2) is proved analogously, by restricting $u$ to the subspace $\{u=u_2e_2: u_2\in \Real\}$
with $e_2 = (0,1)^\top$.


\def\cprime{$'$} \def\cprime{$'$} \def\cydot{\leavevmode\raise.4ex\hbox{.}}
  \def\cprime{$'$} \def\cprime{$'$} \def\cprime{$'$}

\end{document}